\newcommand{\be}{\begin{equation}}
\newcommand{\ee}{\end{equation}}
\newcommand{\beq}{\begin{eqnarray}}
\newcommand{\eeq}{\end{eqnarray}}
\newtheorem{thm}{Theorem}[section]
\newtheorem{conj}{Conjecture}[section]
\newtheorem{lma}{Lemma}[section]
\newtheorem{prop}{Proposition}[section]
\theoremstyle{remark}
\newtheorem{rem}{Remark}[section]
\numberwithin{equation}{section}
\def\tr{\operatorname{tr}}
\def\be{\begin{equation}}
\def\ee{\end{equation}}
\def\bee{\begin{equation*}}
\def\eee{\end{equation*}}
\newcommand{\de}{\partial}
\newcommand{\Ric}{\mathrm{Ric}}
\newcommand{\ov}[1]{\overline{#1}}
\newcommand{\vol}{\mathrm{vol}}
\newcommand{\ve}{\varepsilon}
\newcommand{\loc}{\mathrm{loc}}
\newcommand{\tinabla}{\tilde{\nabla}}
\def\K{K\"ahler }
\def\Ric{\text{\rm Ric}}
\def\Rm{\text{\rm Rm}}
\def\tr{\operatorname{tr}}
\def\e{\varepsilon}
\def\a{{\alpha}}
\def\b{{\beta}}
\begin{document}

\title{Ricci-Deturck flow from rough metrics and applications}

\author[J. Chu]{Jianchun Chu}
\address[Jianchun Chu]{School of Mathematical Sciences, Peking University, Yiheyuan Road 5, Beijing, P.R.China, 100871}
\email{jianchunchu@math.pku.edu.cn}

\author[M.-C. Lee]{Man-Chun Lee}
\address[Man-Chun Lee]{Department of Mathematics, The Chinese University of Hong Kong, Shatin, N.T., Hong Kong}
\email{mclee@math.cuhk.edu.hk}

\subjclass[2020]{Primary 53E20}

\date{\today}

\begin{abstract}
Motivated by the recent work of Lamm and Simon, in this work we study the short-time existence theory of Ricci-Deturck flow starting from  rough metrics which are bi-Lipschitz and have small local scaling invariant gradient concentration.  As applications, we use it to study several stability problems related to the scalar curvature.
\end{abstract}

\keywords{Ricci-Deturck flow, stability problems}

\maketitle

\section{Introduction}

On a Riemannian manifold, the Ricci flow $g(t)$ is a family of metrics evolving along its Ricci direction:
\begin{equation}
\partial_t g(t)=-2\Ric(g(t)).
\end{equation}
Since its introduction by Hamilton \cite{Hamilton1982}, the Ricci flow has been used in a wide variety of settings to regularize metrics.  From analytic point of view, the Ricci flow is only weakly parabolic unless special gauge is used. It was later discovered by Deturck \cite{Deturck1983} that the Ricci flow is diffeomorphic to a strictly parabolic system which by now called the Ricci-Deturck flow.

For a given smooth Riemannian metric $h$ on $M$, a smooth family $g(t),t\in [a,b]$ is said to be a solution to the Ricci-Deturck $h$-flow if it satisfies
\begin{equation}\label{eqn: h-flow-Ricciform}
\left\{\begin{array}{ll}
\partial_t g_{ij}=-2\Ric_{ij}+\nabla_i W_j +\nabla_jW_i;\\[1mm]
W^k=g^{pq}\left(  \Gamma_{pq}^k-\tilde\Gamma_{pq}^k\right)
\end{array}
\right.
\end{equation}
where $\Gamma$ and $\tilde \Gamma$ denote the Christoffel symbols of Levi-Civita connections of $g(t)$ and $h$ respectively.  In local coordinate, \eqref{eqn: h-flow-Ricciform} is equivalent to the following strictly parabolic system:
\begin{equation}\label{eqn: h-flow}
\begin{split}
\partial_t g_{ij}&=g^{pq} \tilde \nabla_p\tilde\nabla_q g_{ij}-g^{kl}g_{ip}h^{pq} \tilde R_{jkql}-g^{kl}g_{jp} h^{pq}\tilde R_{ikql}\\
&\quad +\frac12 g^{kl}g^{pq}\big(\tilde \nabla_i g_{pk}\tilde \nabla_j g_{ql}+2\tilde \nabla_k g_{jp}\tilde \nabla_q g_{il}-2\tilde \nabla_k g_{jp}\tilde \nabla_l g_{iq}\\[1mm]
&\quad -2\tilde \nabla_j g_{pk}\tilde \nabla_l g_{iq}-2\tilde \nabla_i g_{pk}\tilde \nabla_l g_{qj}\big)
\end{split}
\end{equation}
where $\tilde{R}$ and $\tilde \nabla$ denote the curvature and the Levi-Civita connection of $h$.  The flow is strictly parabolic and is known to be diffeomorphic to the Ricci flow as seen from \eqref{eqn: h-flow-Ricciform}. We follow the terminology in \cite{LammSimon2021} to call it the Ricci-Deturck $h$-flow in order to emphasis its dependence on the chosen background metric $h$. Since the flow is strictly parabolic, it is expected that the flow will improve the regularity of the initial data.  When the initial data is sufficiently regular at infinity, it is not difficult to construct a regular solution for a short time using implicit function theorem with some suitable Banach space. However without regularity assumption at infinity, the existence problem becomes difficult. The first important result is by Shi \cite{Shi1989} where he established a short-time solution to the Ricci-Deturck flow with $h=g_0$ where $g_0$ is complete with bounded curvature. Using this, Shi was able to construct complete bounded curvature Ricci flow from bounded curvature initial metric. Moreover, the derivative of the curvature become bounded for positive time. Since then, there are a large body of works improving Shi's result, for example see \cite{KochLamm2012,Hochard2016,BCRW2019,Lai2019,ChauLee2020,LeeTam2020,Lai2021,LeeTam2021-GT,SimonTopping2017,ToppingHao2021} and the references therein.

Motivated by the question concerning the singular metrics in scalar curvature problem (for instances see Conjecture~\ref{conj-1}),  in this work we are primarily interested in the case when the initial metric is in $L^\infty$. We say that $g$ is a $L^\infty$ metric if $g$ is a measurable section of $\mathrm{Sym}_2(T^*M)$ such that $\Lambda^{-1}h\leq g_0\leq \Lambda h$ almost everywhere on $M$ for some $\Lambda>1$ and smooth metric $h$ on $M$.  To the best of authors' knowledge, the first regularization result along this line is by Simon \cite{Simon2002} where he considered the case when $g_0$ is close to a smooth bounded curvature metric $h$ in $L^\infty$ sense. In particular, it was shown that if $(1-\e)h\leq g_0\leq (1+\e)h$  for some sufficiently small dimensional constant $\e$, then the Ricci-Deturck $h$-flow admits a short-time solution on $M\times (0,T]$ which is smooth with uniform higher order regularity for $t>0$. Moreover, if $g_0$ is $C^0_{\loc}$, then the resulting solution attains $g_0$ in $C^0_{\loc}$ as $t\to 0$, see also \cite{KochLamm2012} for a related works on Euclidean space using heat kernel.  The Ricci flow theory from $C^0$ initial metrics has been developed further by Burkhardt-Guim \cite{Burkhardt2019} in order to study the notion of scalar curvature lower bound for $C^0$ metrics on compact manifolds.  The theory of Ricci-Deturck flow from $C^0$ data turns out to be very powerful in many stability problem related to the scalar curvature lower bound, for instances see \cite{McFeronSze2012,Bamler2016,JiangShengZhang2021,LeeTam2021,ShiTam2018}. It is therefore important to seek for extension to more general $L^\infty$ initial data.

It is however hard to obtain estimates under only $L^\infty$ data in general. In a recent work by Lamm and Simon \cite{LammSimon2021},  rough metrics $g_0\in L^\infty\cap W^{2,2}$ on complete four-manifolds $(M^4,h)$ are considered where $h$ is assumed to have bounded geometry of infinite order.   It was shown that the Ricci-Deturck $h$-flow exists for short-time and is uniformly smooth for $t>0$. Moreover, the flow converges back to the initial data in $W^{2,2}_{\loc}$ sense as $t\to 0$. We would like to stress out that a $L^\infty\cap W^{1,n}_{\loc}$ Riemannian metric in general not continuous, see the example discussed in the introduction of \cite{LammSimon2021}. Motivated by their works, we consider the case when $g_0\in L^\infty\cap W^{1,n}$. The following is the quantitative version of the existence theorem, which generalizes the works in \cite{LammSimon2021} to weaker initial regularity assumptions and higher dimensions.  We always assume $n>2$, where $n$ is the dimension of $M$.

\begin{thm}\label{thm-existence}
Let $(M^n,h)$ be a complete Riemannian manifold with $|\Rm(h)|\leq 1$.  For any $\Lambda_0>1$,  there is $\e_0(n,\Lambda_0)>0$ such that the following holds.  Suppose $g_0$  is a $L^\infty\cap W^{1,n}_{\loc}$ Riemannian metric, not necessarily smooth, on $M$ such that
\begin{enumerate}
\item[(i)] $\Lambda_0^{-1}h\leq g_0\leq \Lambda_0 h$ on $M$;
\item[(ii)] for all $x\in M$,
$$\left(\fint_{B_h(x,1)} |\tilde \nabla g_0|^n \; d\mathrm{vol}_h \right)^{1/n}\leq \e$$
for some $\e<\e_0$.
\end{enumerate}
Then there are $T(n,\Lambda_0),C_0(n,\Lambda_0),C_{n}(n),\Upsilon_n>0$ and a smooth solution $g(t)$ to the Ricci-Deturck $h$-flow on $M\times (0,T]$ such that
\begin{enumerate}\setlength{\itemsep}{0.5mm}
\item[(a)] $(\Upsilon_n\Lambda_0)^{-1} h\leq g(t)\leq \Upsilon_n\Lambda_0 h$ on $M\times (0,T]$;
\item[(b)]  For all $x\in M$ and $t\in (0,T]$,
$$\left(\fint_{B_h(x,1)} |\tilde \nabla g(t)|^n \; d\mathrm{vol}_h \right)^{1/n}\leq   C_n\e+C_0 t^{1/n};$$
\item[(c)] For any $k\in \mathbb{N}$, there is $C(k,n,\Lambda_0)>0$ such that for all $t\in (0,T]$,
\begin{equation*}
\sup_M |\tilde\nabla^k g(t)|\leq C(k,n,\Lambda_0) t^{-k/2}.
\end{equation*}
\item[(d)] For all $t\in (0,T]$,  $\sup_M t^{1/2} |\tilde\nabla g(t)|\leq C(n,\Lambda_0)  \sqrt{\e+t^{1/n}}$;
\item[(e)] If $g_0\in C^\infty_{\loc}(\Omega)$ for some $\Omega\Subset M$, then $g(t)\to g_0$ in $C^\infty_{\loc}(\Omega)$ as $t\to 0$;
\item[(f)] $g(t)\to g_0$ in $W^{1,n}_{\loc}$ as $t\to 0$.
\end{enumerate}
\end{thm}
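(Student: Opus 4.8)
The plan is to construct the solution by approximation: first mollify the rough data, run Shi-type Ricci-Deturck $h$-flows for the smooth approximations, derive the crucial a priori estimates uniformly in the approximation parameter, and then pass to the limit. Concretely, I would pick a sequence of smooth metrics $g_0^{(j)}$ with $\Lambda_0^{-1}h \le g_0^{(j)} \le \Lambda_0 h$ (losing an arbitrarily small amount in the bi-Lipschitz constant, which is harmless after renaming $\Lambda_0$), with $g_0^{(j)} \to g_0$ in $W^{1,n}_{\loc}$ and with the scaling-invariant gradient averages $\big(\fint_{B_h(x,1)}|\tilde\nabla g_0^{(j)}|^n\big)^{1/n}$ still bounded by (roughly) $\e$. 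Since $h$ has bounded geometry of order zero, the key point is to get a short-time interval $[0,T]$ and estimates (a)--(d) that do \emph{not} degenerate as $j\to\infty$; once that is in place, (c) gives local uniform $C^\infty$ bounds for $t>0$ by interpolation and bootstrapping, so Arzel\`a--Ascoli produces a smooth limit solution $g(t)$ on $M\times(0,T]$ inheriting (a)--(d).

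The heart of the matter is the a priori estimate, and I expect this to follow the Lamm--Simon strategy adapted to dimension $n$. The natural quantity to control is $u := |g(t) - h|^2_h$ (or $\tr_h g$ and $\tr_g h$ to get (a)) together with the scaling-invariant gradient energy $E(x,t) := \fint_{B_h(x,1)}|\tilde\nabla g(t)|^n\,d\vol_h$. Writing the evolution equation from \eqref{eqn: h-flow}, the gradient terms are quadratic in $\tilde\nabla g$, so differentiating and testing against suitable powers one obtains a differential inequality for a localized version of $E$ of the form $\partial_t E \le \Delta E + C(E^{1+1/n} + E + 1)$ up to boundary/cutoff errors controlled via $|\Rm(h)|\le 1$ and the bi-Lipschitz bound. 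A continuity/bootstrap argument on the set of times where $E \le C_n\e + C_0 t^{1/n}$ and where (a) holds with constant $\Upsilon_n\Lambda_0$ then closes, provided $\e_0$ is small enough and $T$ small enough depending only on $n,\Lambda_0$; this simultaneously yields (b) and, after a parabolic rescaling (Ecker--Huisken / Shi interior gradient estimate) feeding the bound on $E$ into a pointwise estimate, yields (d). Estimate (c) for all $k$ is then the standard Shi interior higher-order estimate once $|\tilde\nabla g|$ is pointwise bounded by $Ct^{-1/2}\sqrt{\e+t^{1/n}}$, so in particular bounded on $M\times[\tau,T]$ for each $\tau>0$. The main obstacle is precisely controlling the quadratic gradient nonlinearity in the borderline $W^{1,n}$ scaling: unlike the $W^{2,2}$ setting in dimension $4$ there is no Sobolev gain, so one must exploit the \emph{smallness} of $\e$ together with the local scaling invariance of the $L^n$ norm of $\tilde\nabla g$, absorbing the bad term $E^{1+1/n}$ by its own smallness.

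For the initial-value attainment statements (e) and (f), I would argue as follows. For (e), on $\Omega\Subset M$ where $g_0$ is smooth, the estimates (a)--(d) plus the equation give uniform $C^\infty_{\loc}$ bounds up to $t=0$ on a slightly smaller domain (the gradient bound (d) degenerates only like $t^{-1/2}\sqrt{\e+t^{1/n}}$, but near a point where $g_0$ is smooth one has genuine $C^1$ control of the data, so a local barrier/energy argument upgrades this to boundedness of $\tilde\nabla g(t)$ as $t\to 0$); then parabolic regularity and uniqueness of the limit identify $\lim_{t\to0}g(t)=g_0$ in $C^\infty_{\loc}(\Omega)$. For (f), the strategy is to estimate, for a fixed cutoff $\phi$ supported in a coordinate ball, the energy $\int \phi\,|\tilde\nabla(g(t)-g_0^{(j)})|^n$; using the evolution equation one derives $\frac{d}{dt}\int\phi|\tilde\nabla g(t)|^n \le C(1+\sup E)$, so $\int\phi|\tilde\nabla g(t)|^n$ is uniformly bounded and varies continuously, giving weak convergence $\tilde\nabla g(t)\rightharpoonup \tilde\nabla g_0$ in $L^n_{\loc}$; to upgrade to strong convergence one combines this with convergence of the $L^n$ norms, which follows from lower semicontinuity together with the upper bound $\limsup_{t\to0}\int\phi|\tilde\nabla g(t)|^n \le \int\phi|\tilde\nabla g_0|^n$ obtained from the monotone-type estimate above (a uniformly convex space argument then yields norm + weak $\Rightarrow$ strong). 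Passing $j\to\infty$ and using $g_0^{(j)}\to g_0$ in $W^{1,n}_{\loc}$ completes (f).
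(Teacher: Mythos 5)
Your overall skeleton (approximate $g_0$ by smooth metrics, run Shi/Lamm--Simon flows, prove estimates uniform in the approximation, pass to a limit, then handle (e) and (f) by local smoothness and energy arguments) is the same as the paper's. But there is a genuine gap at the step on which everything else hinges: the preservation of the uniform equivalence (a) with only a dimensional loss $\Upsilon_n$. You assert that a continuity/bootstrap argument ``closes'' on the set of times where (a) and the gradient-energy bound hold, with $\tr_h g$ and $\tr_g h$ as the natural quantities, but you give no mechanism, and the obvious one fails: the evolution of $\tr_h g$ produces a term of size $|\tilde\nabla g|^2$, and the best pointwise bound available in this borderline setting is $|\tilde\nabla g|^2\lesssim (\e+t^{1/n})/t$, which is \emph{not} integrable in time near $t=0$ (the $\e/t$ part gives a logarithm), so a pointwise maximum-principle estimate does not yield a trace bound on a definite time interval. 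The paper's Lemma~\ref{borderline-trace} is exactly the missing ingredient: one integrates the trace inequality over balls of radius $r$, where the bad term is controlled \emph{uniformly in $t$} by H\"older against the small local $L^n$ gradient norm ($\int_{B}|\tilde\nabla g|^2\lesssim \delta^2|B|^{1-2/n}|B|^{2/n}$), and then converts the resulting integral bound at the first failure time back into a pointwise statement using the $C^1$ bound $|\tilde\nabla g|\lesssim \delta^{1/2}t^{-1/2}$ with the ball radius chosen as $r=\sqrt{T_1}$. Without this (or an equivalent device) the continuity argument does not close, and since the constants in the energy estimate, the $\e$-regularity step giving (d), and the Shi-type estimates (c) all depend on (a), the whole bootstrap is incomplete as written. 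Relatedly, your route from the $L^n$ concentration bound to the pointwise bound (d) via ``Shi/Ecker--Huisken'' needs the same care the paper takes in Lemma~\ref{lma:localEstimate-fromC^1conc}: interior estimates require a $C^1$ bound to start, so one must run a maximal-time/point-picking contradiction argument rather than quote interior gradient estimates directly.

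One place where you genuinely diverge from the paper is (f): you propose weak $L^n_{\loc}$ convergence of $\tilde\nabla g(t)$ plus convergence of norms and uniform convexity (Radon--Riesz), whereas the paper proves strong convergence directly by an energy estimate for $|\tilde\nabla(g(t)-\bar g)|$ with a fixed smooth comparison metric $\bar g$ chosen $W^{1,n}$-close to $g_0$ (Proposition~\ref{W1n stability}), combined with the $L^p$ time-zero continuity of Proposition~\ref{timezero-regularity-W1n}. Your route can probably be made to work, but note two points of care: the natural energy inequality bounds $\int_{B_1}|\tilde\nabla g(t)|^n$ by $\int_{B_2}|\tilde\nabla g_0|^n+Ct$ with a \emph{larger} ball on the right, so obtaining $\limsup_{t\to0}\|\tilde\nabla g(t)\|_{L^n(\phi)}\le\|\tilde\nabla g_0\|_{L^n(\phi)}$ with the same weight requires thin-annulus cutoffs and a limiting argument; and identifying the weak limit as $\tilde\nabla g_0$ (rather than the gradient of some other limit) requires a uniform-in-$j$ time-zero continuity statement for $g(t)$ itself, i.e.\ an analogue of Proposition~\ref{timezero-regularity-W1n}, which your sketch uses implicitly but does not establish. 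The paper's comparison-metric trick avoids both issues at the cost of introducing $\bar g$.
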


Moreover, the solution is unique within the suitable class, see Theorem~\ref{thm-unique}. Here the background metric $h$ is serving as a reference to rule out the bubbling singularity in view of concentration only. A simple scaling argument will show that one can replace $h$ by any other fixed metric which is uniformly $C^1$ comparable to $h$.

In short, Theorem~\ref{thm-existence} shows that $L^\infty$ metric with small local gradient concentration can be smoothed with quantitative estimates. In particular, this theorem implies that if $g_0\in L^{\infty}\cap W^{1,n}$ and $h$ is uniformly non-collapsed, then one can deform the metric $g_0$ slightly to a smooth metric. We refer readers to Theorem~\ref{shorttime-compact} for the detailed statement. This is in spirit similar to the work in \cite{ChanChenLee2022} where smooth metrics with small $L^{n/2}$ curvature are considered.

\begin{rem}
Since the Ricci-Deturck $h$-flow is diffeomorphic to the Ricci flow, it is nature to expect that we can  produce Ricci flow starting from $g_0$. If $g_0$ is assumed to be smooth, this was carried out in \cite{HuangTam2018}. For rough initial data, we refer readers to \cite{LammSimon2021} for the detailed exposition. We do not pursue here.
\end{rem}

When the background metric $(M,h)$ is standard Euclidean space, we are able to show that the constructed Ricci-Deturck $h$-flow exists for all time. Moreover, it will converge to $h$ if it is initially close to $h$ at spatial infinity, see Section~\ref{Sec:stability}.

The most important feature of the Ricci-Deturck flow (or Ricci flow) is that it tends to preserve non-negativity of curvature. In particular, it is well-known that the Ricci-Deturck flow preserved the scalar curvature lower bound in the smooth case. Using this property and the above new existence theory, we study several problems related to metrics with scalar curvature lower bound.

When $M=\mathbb{T}^n$, the celebrated work of Schoen and Yau \cite{SchoenYau1979,SchoenYau1979-2}, Gromov and Lawson \cite{GromovLawson1980} stated that metrics with $\mathcal{R}\geq 0$ must be flat. In \cite{Gromov2014}, Gromov asked if a sequence of metrics $g_i$ with $\mathcal{R}(g_i)\geq -i^{-1}$ will sub-converge to a flat metric in some appropriate weak sense. In Theorem~\ref{thm:stability-sigma}, we show that if $g_i$ is uniformly $L^\infty$ with uniformly small gradient concentration, then it will sub-converge to a flat metric in $L^p$ for all $p>0$. For related works, we refer interested readers to \cite{AHPPS2019,Allen2020,CAP2020,ChuLee2021,ChuLee2021-2} and the survey paper of Sormani \cite{Sormani2021} for a comprehensive discussion.

The torus stability problem is on the other hand closely related to a rigidity conjecture of Schoen predicting that $L^\infty$ metrics on $\mathbb{T}^n$ with co-dimension three singularity and $\mathcal{R}\geq 0$ on regular part must have removable singularity, see Conjecture~\ref{conj-1} for the full conjecture. There are many important works toward the full conjecture. For instance, it was confirmed by Li and Mantoulidis \cite{LiMantoulidis2019} when $n=3$ using minimal surface method. When the singular metric is continuous, it was confirmed by the second named author and Tam \cite{LeeTam2021}. To this end, in this work we consider the singular metric which is additionally $W^{1,n}$ and show that they must be Ricci flat on the regular part, see Theorem~\ref{rigidity-FollowingLEETAM}. Using the same argument as in \cite{LeeTam2021}, we also prove a positive mass theorem for metrics with analogous singularity, see Theorem~\ref{rigidity-FollowingLEETAM-PMT}.

Another related problem is whether scalar curvature lower bound is preserved under weak convergence if the limit is a-priori smooth. The first result along this line is by Gromov \cite{Gromov2014} saying that the scalar curvature lower bound will be preserved under $C^0$ convergence. In \cite{Bamler2016}, a Ricci flow proof was given by Bamler. This was later generalized by Burkhardt-Guim \cite{Burkhardt2019} to the case when the limit is not smooth. Using the Ricci-Deturck flow and uniqueness, we show that the same conclusion holds for uniformly bi-Lipschitz sequence converging to a smooth metric in $W^{1,n}$, see Theorem~\ref{thm:stability-R-lowerbound}.

The paper is organized as follows. In Section~\ref{Sec:apriori}, we will establish a-priori estimates of smooth Ricci-Deturck flow. In Section~\ref{Sec:Existence}, we will use these a-priori estimates to construct short-time solution from rough initial data. In particular, we prove our main result, Theorem~\ref{thm-existence}. In Section~\ref{Sec:stability}, we will study the stability of rough initial metric on Euclidean along the Ricci-Deturck flow. In Section~\ref{Sec:almostRid}, we will use the Ricci-Deturck flow to study the stability problem in scalar curvature related to the Yamabe type of manifolds. In Section~\ref{Sec:singular}, we use the Ricci-Deturck flow to regularize singular metrics and establish rigidity. In Section~\ref{Sec:scalarPres}, we study the scalar curvature persistence problem under $W^{1,n}$ convergence using the new existence theory.

\medskip

{\it Acknowledgement}:
The authors would like to thank Tobias Lamm for suggesting the problem and useful discussion. J. Chu was partially supported by Fundamental Research Funds for the Central Universities (No. 7100603592 and No. 7100603624).

\section{A-priori estimates}\label{Sec:apriori}
In this section, we will assume the Ricci-Deturck $h$-flow to be smooth up to $t=0$, and derive a-priori estimates under small gradient concentration assumption. To fix our notations, we will use $|\Omega|$ to denote the measure of $\Omega$ with respect to $h$. We will also denote $a\wedge b=\min\{a,b\}$ for $a,b\in \mathbb{R}$.

\medskip

Suppose $(M,h)$ is a complete (not necessarily compact) manifold with bounded curvature. The following proposition shows that we may without loss of generality assume that all derivative of curvature $\Rm(h)$ are bounded.
\begin{prop}\label{prop:RF-smoothing}
Suppose $(M,h_0)$ is a complete manifold with $|\Rm(h_0)|\leq 1$, then for any $\e>0$, there is a complete metric $\tilde h$ on $M$ such that
\begin{enumerate}
\item[(i)] $\|\tilde h-h_0\|_{C^1(M,h_0)}<\e$;
\item[(ii)] for all $k\in \mathbb{N}$, there is $C(n,k,\e)>0$ such that
$$\sup_M|\nabla^k \Rm(\tilde h)|\leq C(n,k,\e).$$
\end{enumerate}
\end{prop}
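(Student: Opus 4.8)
The plan is to smooth the metric $h_0$ by running the Ricci-Deturck flow (or equivalently the Ricci flow) for a short time starting from $h_0$ itself, and then take $\tilde h = h(\tau)$ for a sufficiently small $\tau$. Since $(M,h_0)$ is complete with $|\Rm(h_0)|\leq 1$ and hence bounded curvature, Shi's theorem \cite{Shi1989} provides a complete solution $h(t)$ to the Ricci flow on $M\times [0,\tau_0]$ for some $\tau_0 = \tau_0(n) > 0$, with the curvature bound $|\Rm(h(t))|\leq C(n)$ preserved on this interval (by the maximum principle applied to $|\Rm|^2$, using the standard reaction-diffusion inequality). Shi's local derivative estimates then give, for each $k$, a bound $\sup_M |\nabla^k \Rm(h(t))|\leq C(n,k) t^{-k/2}$ for $t\in(0,\tau_0]$; in particular, fixing any $\tau\in(0,\tau_0]$ yields property (ii) with $C(n,k,\e)$ depending on $\tau$, hence on $\e$ once we know how small $\tau$ must be chosen. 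Passing to the Ricci-Deturck $h_0$-flow instead of Ricci flow is harmless here since the two differ by a diffeomorphism which is the identity at $t=0$ and close to the identity for small time; alternatively one can work directly with the Ricci-Deturck flow, for which Shi-type estimates are also available.

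The remaining point is property (i): that $\tilde h = h(\tau)$ is $\e$-close to $h_0$ in $C^1(M,h_0)$ once $\tau$ is small. For the $C^0$ part, integrating $\partial_t h = -2\Ric(h(t))$ in $t$ and using $|\Ric(h(t))|\leq |\Rm(h(t))|\leq C(n)$ gives $|h(\tau) - h_0|_{h_0}\leq C(n)\tau$, which is $<\e/2$ for $\tau$ small. For the $C^1$ part one needs $|\nabla^{h_0}(h(\tau) - h_0)|_{h_0}$ small; since $\nabla^{h_0} h_0 = 0$, this is $|\nabla^{h_0} h(\tau)|_{h_0}$, and the difficulty is that the naive derivative estimate $|\nabla \Rm(h(t))|\leq C t^{-1/2}$ only gives $\int_0^\tau |\nabla \partial_t h|\,dt \lesssim \int_0^\tau t^{-1/2}\,dt = 2\sqrt\tau$, which still tends to $0$ — so in fact this also works, giving $|\nabla^{h_0}(h(\tau)-h_0)|_{h_0}\leq C(n)\sqrt\tau < \e/2$ for $\tau$ small. (One must be slightly careful that $\nabla$ here is $\nabla^{h_0}$ versus $\nabla^{h(t)}$, but these connections differ by a tensor controlled by $\sup_{[0,t]}|\nabla^{h_0} h(s)|$, so a short bootstrap on a small interval closes this; alternatively Shi's estimates can be stated directly with respect to the fixed background connection.)

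The main obstacle — really the only subtle point — is handling the $C^1$ closeness uniformly on a \emph{noncompact} $M$: one must ensure all the constants from Shi's existence and derivative estimates depend only on $n$ (and the curvature bound $1$), not on any global geometric quantity, so that choosing $\tau = \tau(n,\e)$ small enough gives (i) and (ii) simultaneously on all of $M$. This is exactly the content of Shi's theorem and its local derivative estimates, so no genuine difficulty arises; we simply invoke them and record the dependence of constants. Thus the proof reduces to: (1) solve Ricci(-Deturck) flow from $h_0$ for a uniform short time with uniform curvature bound via \cite{Shi1989}; (2) apply Shi's local derivative estimates to get (ii) at time $\tau$; (3) integrate the flow equation and its first spatial derivative in time to get $C^1(M,h_0)$-closeness, controlled by $C(n)(\tau + \sqrt\tau)$; (4) fix $\tau = \tau(n,\e)$ small enough and set $\tilde h = h(\tau)$.
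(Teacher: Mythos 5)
Your proposal is correct and follows essentially the same route as the paper: run Shi's Ricci flow from $h_0$ for a short uniform time with a uniform curvature bound, invoke Shi's derivative estimates $|\nabla^k\Rm(h(t))|\leq C(n,k)t^{-k/2}$ for (ii), get $C^0$ closeness by integrating $\partial_t h=-2\Ric$, and get $C^1$ closeness by integrating the $t^{-1/2}$ bound to obtain $|\nabla^{h_0}h(t)|_{h_0}\leq C t^{1/2}$, then set $\tilde h=h(t_\e)$. The connection-comparison subtlety you flag is present (and handled the same way, implicitly) in the paper's argument as well, so there is no substantive difference.
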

\begin{proof}
Since $h$ has bounded curvature, the classical existence theory of Ricci flow by Shi \cite{Shi1989} and doubling time estimates infer that there is $T_n>0$ and a solution to the Ricci flow $h(t)$ starting from $h(0)=h_0$ such that $|\Rm(h(t))|\leq 2$ on $[0,T_n]$. Shi's estimate implies that for all $k\in \mathbb{N}$, there is $C(n,k)>0$ such that for all $t\in (0,T_n]$,
\begin{equation}\label{prop:RF-smoothing eqn 1}
|\nabla^k \Rm(h(t))|\leq \frac{C(n,k)}{t^{k/2}}.
\end{equation}
Since $|\Rm(h(t))|\leq 2$ on $[0,T_n]$, the Ricci flow equation implies
\begin{equation}
\left|\frac{\de}{\de t}h(t)\right| \leq 2|\Ric(h(t))| \leq 4
\end{equation}
and so
\begin{equation}\label{prop:RF-smoothing eqn 2}
e^{-4t}h_0\leq h(t)\leq e^{4t}h_0.
\end{equation}
It remains to consider the $C^1$ continuity of $h(t)$. By Shi's estimates,
\begin{equation}
\begin{split}
\frac{\de}{\de t}|\nabla^{h_0}h(t)|_{h_0}^2&=4\langle \nabla^{h_0}\Ric(h(t)), \nabla^{h_0} h(t) \rangle_{h_0}\leq \frac{C_1}{t^{1/2}}|\nabla^{h_0}h(t)|_{h_0}.
\end{split}
\end{equation}
For any $\sigma>0$, we define $Q=\sqrt{|\nabla^{h_0}h(t)|_{h_0}^2+\sigma}$. Then the above shows $\de_{t}(Q^{2})\leq C_{1}Qt^{-1/2}$ and then $\de_{t}Q\leq C_{1}t^{-1/2}$. Integrating on both sides on $[0,t]$ and using $|\nabla^{h_0}h(0)|_{h_0}=0$,
\begin{equation}
\sqrt{|\nabla^{h_0}h(t)|_{h_0}^2+\sigma}-\sigma \leq C_{1}t^{1/2}.
\end{equation}
Letting $\sigma\to0$, we conclude that $|\nabla^{h_0}h(t)|_{h_{0}}\leq C(n,k)t^{1/2}$ on $M\times [0,T_n]$. Combining this with \eqref{prop:RF-smoothing eqn 1} and  \eqref{prop:RF-smoothing eqn 2}, the assertion follows by choosing $\tilde h=h(t_\e)$ for sufficiently small $t_{\e}$.
\end{proof}

By Proposition~\ref{prop:RF-smoothing} and scaling, in what follows we may as well assume $h$ to satisfy the following slightly stronger assumption:
\begin{equation*}
(\star):\left\{
\begin{array}{ll}
\forall k\in \mathbb{N},\; \exists\; C(n,k)>0, \; \sup_M|\nabla^k \Rm(h)|\leq C(n,k);\\[1mm]
\mathrm{diam}(M,h)>4.
\end{array}
\right.
\end{equation*}
If $M$ is non-compact, the diameter is understood to be $+\infty$.  We introduce this so that the Sobolev inequality below is always local.  It is standard (see e.g. \cite[Theorem 14.3]{LiBook}) that a local Sobolev inequality holds with respect to the metric $h$: there is $C_{S}>0$ such that for any $x_0\in M$ and $\phi\in C^\infty_{c}(B_h(x_0,2))$,
\begin{equation}\label{Sobo-ineq}
\left(\fint_{B_h(x_0,2)} \phi^\frac{2n}{n-2} d\mathrm{vol}_h \right)^\frac{n-2}{n}\leq C_S \cdot \left(\fint_{B_h(x_0,2)} |\tilde\nabla \phi|^2 d\mathrm{vol}_h \right).
\end{equation}
Since we have fixed the radius and rescaled the curvature, the Sobolev constant is indeed a dimensional constant. We also remark that the Sobolev inequality is stable with respect to $C^0$ perturbation of metrics.

\medskip

Before we establish the crucial a-priori estimates, we will collect an important smoothing estimates in Ricci-Deturck $h$-flow.

\begin{lma}\label{lma:localEstimate-fromC^1}
Suppose $g(t)$ is a smooth solution of the Ricci-Deturck $h$-flow on $M\times [0,T]$ for some background metric $h$ satisfying $(\star)$ so that for $t\in (0,T]$,
\begin{equation}
\left\{
\begin{array}{ll}
\Lambda^{-1} h\leq g(t)\leq \Lambda h;\\[1mm]
\sup_M |\tilde \nabla g(t)|\leq C_1 t^{-1/2}
\end{array}
\right.
\end{equation}
for some $\Lambda,C_1>0$. Then for all $k\in \mathbb{N}$, there is $C(n,k,C_1,\Lambda)>0$ such that for all $t\in (0,T]$,
\begin{equation}
\sup_M|\tilde \nabla^k g(t)|\leq \frac{C(n,k,C_1,\Lambda)}{t^{k/2}}.
\end{equation}
\end{lma}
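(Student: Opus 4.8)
The plan is to run a standard Bernstein-type iteration on the parabolic system \eqref{eqn: h-flow}, using the assumed $C^1$-bound $\sup_M |\tilde\nabla g(t)| \le C_1 t^{-1/2}$ as the base case $k=1$ and bootstrapping to all higher $k$. Since $h$ satisfies $(\star)$, all derivatives $\tilde\nabla^j \Rm(h)$ are bounded by dimensional constants, and the uniform equivalence $\Lambda^{-1}h \le g(t) \le \Lambda h$ controls all contractions of $g(t)^{-1}$ and $h^{-1}$ appearing as coefficients. Differentiating \eqref{eqn: h-flow} $k$ times with respect to $\tilde\nabla$, one obtains a parabolic equation of the schematic form $(\partial_t - g^{pq}\tilde\nabla_p\tilde\nabla_q)(\tilde\nabla^k g) = \sum \tilde\nabla^{a_1}g * \cdots * \tilde\nabla^{a_m} g + (\text{lower-order curvature terms})$ where the nonlinear terms involve at most $k+1$ derivatives in total and each factor $\tilde\nabla^{a_i}g$ has $a_i \le k$ except for at most one factor carrying $a_i = k+1$ but contracted against a single $\tilde\nabla g$; the worst such terms are quadratic in the top order.

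The key steps, in order, are as follows. First, I would set up the induction hypothesis that $\sup_M |\tilde\nabla^j g(t)| \le C_j t^{-j/2}$ for all $j \le k-1$. Second, to pass to level $k$, I would consider the quantity $\Phi_k = t^k |\tilde\nabla^k g|^2 + A_{k-1} t^{k-1}|\tilde\nabla^{k-1} g|^2$ for a large constant $A_{k-1}$, compute $\heat \Phi_k$ using the differentiated equation and Kato's inequality, and absorb the cross terms: the good negative term $-2t^k|\tilde\nabla^{k+1}g|^2$ coming from the Laplacian of $|\tilde\nabla^k g|^2$ dominates the dangerous term $t^k |\tilde\nabla^k g|\,|\tilde\nabla^{k+1}g|\,|\tilde\nabla g|$ after Cauchy-Schwarz (using $|\tilde\nabla g| \le C_1 t^{-1/2}$, this term is $\le \tfrac12 t^k |\tilde\nabla^{k+1}g|^2 + C t^{k-1}|\tilde\nabla^k g|^2$), while the remaining lower-order nonlinear and curvature terms are controlled by the induction hypothesis and the choice of $A_{k-1}$. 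Third, since $M$ may be noncompact, I would localize: multiply $\Phi_k$ by a cutoff $\eta^2$ with $\eta \in C^\infty_c(B_h(x_0,2))$, $\eta \equiv 1$ on $B_h(x_0,1)$, and $|\tilde\nabla\eta|, |\tilde\nabla^2\eta| \le C$, and apply the maximum principle to $\eta^2 \Phi_k$ on $B_h(x_0,2) \times [0,T]$; the cutoff error terms are lower order and handled by enlarging $A_{k-1}$ once more. The initial condition at $t=0$ is fine because $t^k$ and $t^{k-1}$ vanish there. Taking the supremum over $x_0$ yields $\sup_M |\tilde\nabla^k g(t)| \le C(n,k,C_1,\Lambda)\, t^{-k/2}$.

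The main obstacle is the bookkeeping in the localized maximum principle argument: one must verify that at a first interior space-time maximum of $\eta^2 \Phi_k$, every error term — the cutoff derivatives, the lower-order nonlinearities, and the curvature contributions — is genuinely dominated, and this requires carefully tracking the powers of $t$ so that no term of order worse than $t^{k-1}|\tilde\nabla^k g|^2$ (which is absorbed by the auxiliary term) survives. A cleaner alternative, which I would actually prefer to write, is to proceed by a parabolic rescaling and interior Schauder/De Giorgi-Nash-Moser argument: for fixed $t_0 \in (0,T]$ and $x_0 \in M$, rescale $\hat g(s) = t_0^{-1} g(t_0 s)$ and $\hat h = t_0^{-1} h$ on the parabolic cylinder of unit size; the hypotheses give uniform $C^0$ and $C^1$ bounds for $\hat g$ on this cylinder independent of $t_0$, the coefficients of the rescaled equation are uniformly bounded, and interior parabolic regularity then gives uniform $C^k$ bounds for $\hat g$ on a smaller cylinder, which unwind to exactly the claimed estimate. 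This sidesteps the hand-computation entirely and isolates the only real point, namely that the $C^1$-bound is scaling-critical and hence survives the rescaling with a $t_0$-independent constant.
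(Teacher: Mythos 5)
Your primary plan (induction on $k$ with Bernstein--Shi test functions, using the scaling-critical bound $|\tilde\nabla g|\le C_1 t^{-1/2}$ to absorb the top-order quadratic terms, localized by cutoffs and the maximum principle) is exactly the argument the paper invokes: its proof is a two-line citation to \cite[Lemma 4.2, Theorem 4.3]{Simon2002} and \cite{Shi1989}, noting that the assumed $C^1$ bound and metric equivalence replace the ``$\delta$-fair'' hypothesis, i.e.\ the same Bernstein--Shi trick you describe. So the proposal is correct and essentially coincides with the paper's route (your rescaling-plus-interior-regularity alternative is also a legitimate variant, but it is not what the paper does).
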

\begin{proof}

This follows from the proof of \cite[Lemma 4.2, Theorem 4.3]{Simon2002}. We remark here that since we have assumed a bound on $|\tilde\nabla g(t)|$ and metric equivalence,  the ``$\delta$ fair to $h$" assumption as in \cite[Lemma 4.2, Theorem 4.3]{Simon2002} is not necessary.  The proof can be carried over by the Bernstein-Shi's trick, see also \cite{Shi1989}.
\end{proof}

In particular, Lemma~\ref{lma:localEstimate-fromC^1} reduces the question of obtaining derivatives estimates to obtaining $C^1$ estimate of the Ricci-Deturck $h$-flow. The following lemma shows that as long as a concentration estimates of $\tilde \nabla g(t)$ is preserved along the flow, the $C^{1}$ estimates will follow.

\begin{lma}\label{lma:localEstimate-fromC^1conc}
 For any $\Lambda>1$, there is $c_1(n,\Lambda)>0$ such that if $g(t)$ is a smooth solution of the Ricci-Deturck $h$-flow on $M\times [0,T]$ for some background metric $h$ satisfying the assumption $(\star)$ so that
\begin{enumerate}
\item[(i)] $\Lambda^{-1}h\leq g(t)\leq \Lambda h$ on $M\times [0,T]$;
\item[(ii)] $\forall x\in M$ and $t\in (0,T]$,
$$\left(t^{n/2}\cdot \fint_{B_h(x,\sqrt{t})}|\tilde \nabla g(t)|^n d\mathrm{vol}_h \right)^{1/n}\leq c_1\e^2 \quad\text{for some}\; \e\in (0,1);$$
\item[(iii)] $\sup_{M\times [0,T]} |\tilde \nabla^k g(t)|<+\infty$ for all $k\in \mathbb{N}$.
\end{enumerate}
Then for all $t\in (0,T\wedge 1]$, we have
\begin{equation}
\sup_M |\tilde \nabla g(t)|< \frac{\e}{\sqrt{t}}.
\end{equation}
\end{lma}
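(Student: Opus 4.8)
The plan is to run a maximum-principle argument on the quantity $F = t\,|\tilde\nabla g(t)|^2$ (or a smoothed version of it), exploiting the local smallness hypothesis (ii) to absorb the dangerous quadratic gradient terms. First I would compute the evolution inequality for $u := |\tilde\nabla g(t)|^2$ under the Ricci-Deturck $h$-flow \eqref{eqn: h-flow}. Schematically, differentiating \eqref{eqn: h-flow} once and contracting, one gets
\begin{equation*}
\heat u \leq -2|\tilde\nabla^2 g|^2 + C(n,\Lambda)\big(|\tilde\nabla g|^2 + |\tilde\nabla g|^3 + |\tilde\nabla g|^4\big) + C(n,\Lambda)|\tilde\nabla g|\,|\tilde\nabla^2 g|,
\end{equation*}
where the cubic and quartic terms come from the first-order nonlinearities in \eqref{eqn: h-flow} and the linear term from the curvature of $h$ (using $(\star)$ and metric equivalence $\Lambda^{-1}h\le g\le\Lambda h$). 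Using Young's inequality on the last term, $C|\tilde\nabla g||\tilde\nabla^2 g|\le |\tilde\nabla^2 g|^2 + C^2 u$, the Hessian term is controlled and we are left with $\heat u \le -|\tilde\nabla^2 g|^2 + C_2(n,\Lambda)(u + u^{3/2} + u^2)$. Since we are working on $(0,T\wedge 1]$ and we will eventually show $u$ is small, the $u$ and $u^{3/2}$ terms are minor and the genuine enemy is the quadratic term $C_2 u^2$; this is exactly what the integral concentration hypothesis is designed to kill via a local $L^{n/2}$-type (Moser/De Giorgi) iteration built on the Sobolev inequality \eqref{Sobo-ineq}.

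The key step is a \emph{local $\varepsilon$-regularity iteration}. Hypothesis (iii) guarantees all the integrations by parts below are legitimate, and (iii) is only qualitative so it never enters the constants. Fix $x_0\in M$ and a cutoff $\phi\in C_c^\infty(B_h(x_0,\sqrt t))$ (parabolically rescaled), and run a Moser iteration on $u$: multiply the evolution inequality by $\phi^2 u^{p-1}$, integrate over $B_h(x_0,\sqrt t)\times[0,t]$, integrate by parts, and apply the Sobolev inequality \eqref{Sobo-ineq}. The quadratic term $\int \phi^2 u^{p+1}$ is handled by Hölder: it is bounded by $\big(\int \phi^2 u^n\big)^{1/?}\big(\int \phi^{?}u^{?}\big)^{?}$ so that the factor $\big(t^{n/2}\fint_{B_h(x_0,\sqrt t)}|\tilde\nabla g|^n\big)^{1/n}\le c_1\e^2$ appears and, provided $c_1$ is chosen small (depending only on $n,\Lambda$ via $C_2$ and $C_S$), this term is absorbed into the good Sobolev/Dirichlet term on the left. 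Iterating $p\to\infty$ upgrades the scale-invariant $L^{n}$-smallness of $\tilde\nabla g$ to a pointwise bound
\begin{equation*}
\sup_{B_h(x_0,\sqrt t/2)}u(\cdot,t)\leq \frac{C(n,\Lambda)}{t}\left(t^{n/2}\fint_{B_h(x_0,\sqrt t)}|\tilde\nabla g|^n\,d\mathrm{vol}_h\right)^{2/n}\leq \frac{C(n,\Lambda)\,c_1^2\,\e^4}{t};
\end{equation*}
taking $c_1$ small enough that $C(n,\Lambda)c_1^2 < 1$ and ranging $x_0$ over $M$ gives $\sup_M|\tilde\nabla g(t)|^2 < \e^2/t$, i.e. the claim. (The strict inequality is free since $\e^4<\e^2$.)

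The main obstacle is making the quadratic term absorption genuinely work at the level of the iteration: one has to choose the Hölder exponents so that the small factor is \emph{exactly} the scale-invariant quantity in hypothesis (ii), and simultaneously ensure the iteration constants do not blow up with $p$ in a way that defeats the smallness — this is the standard but delicate bookkeeping in $\varepsilon$-regularity arguments, and here it is complicated by the fact that the flow is not scale-invariant (the curvature of $h$ and the lower-order terms break scaling), so one must localize to $t\le 1$ and treat those terms as lower-order perturbations. A secondary point is to verify that at the first time the bound could fail one can still start the iteration from time $0$ cleanly; since hypothesis (iii) already gives qualitative smoothness on all of $[0,T]$ and the bound is an open condition, a continuity/contradiction argument (or simply running the iteration on $[\tau,t]$ and letting $\tau\to 0$ using that $u$ stays bounded) closes this gap.
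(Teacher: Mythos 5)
Your route is genuinely different from the paper's, and as written it has gaps that matter at exactly the point where the lemma is delicate. The paper does not iterate at all: it rescales so that the first time at which $|\tilde \nabla g|\le \e t^{-1/2}$ could fail becomes $t=1$, applies Lemma~\ref{lma:localEstimate-fromC^1} (Bernstein--Shi, using the bound $\e t^{-1/2}$ valid up to that time) to get a Hessian bound $|\tilde\nabla^2 g|\le C_2(n,\Lambda)$ at $t=1$, and then notes that a point with $|\tilde\nabla g|\ge \e/2$ forces $|\tilde\nabla g|\ge \e/4$ on a ball of radius $c_3\e$, whence $\fint_{B_h(x_0,1)}|\tilde\nabla g(1)|^n\ge (c_4\e^2)^n$, contradicting (ii) once $c_1<c_4$. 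This is where the quadratic gain ($\e^2$ in the hypothesis versus $\e$ in the conclusion) is consumed; no Sobolev inequality or absorption is needed.

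Concrete gaps in your proposal. First, your key claimed estimate bounds $\sup_{B_h(x_0,\sqrt t/2)}u(\cdot,t)$ by the spatial $L^{n/2}$ average of $u$ \emph{at the same time} $t$; such a same-time-slice estimate is false for heat-type equations (take $u$ a heat kernel at small time: the sup at a fixed time is not controlled by its $L^{n/2}$ spatial average at that time). What the parabolic iteration gives is a bound by the average over a backward cylinder $B_h(x_0,\sqrt t)\times[t/2,t]$, which is fine here only because (ii) is assumed at \emph{all} earlier times; you need to state and use it in that form. Second, your evolution inequality omits the inhomogeneity linear in $|\tilde\nabla g|$ coming from $\tilde\nabla\Rm(h)$ (your lowest-order term is $|\tilde\nabla g|^2$). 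This term is the real danger at times $t\sim 1$: if it is bounded crudely by a constant (as in the paper's \eqref{W 1 n estimate eqn 1}, where that is harmless), the iteration yields $\sup u\lesssim (c_1\e^2)^2+O(1)$, which does not beat the required $\e^2/t$ when $t\sim1$ and $\e$ is small. To rescue the scheme one must keep it as $C|\tilde\nabla g|=Cu^{1/2}$ and exploit that its spatial $L^n$ norm is $\le C c_1\e^2$ by (ii), so that it contributes only $O(c_1\e^2)$ to $\sup u$; your sketch never addresses this, and the conclusion's $\e$-gain hinges on it. Third, the absorption of the critical term $\int\phi^2u^{p+1}$ cannot be done with a fixed smallness at every level $p$ (the usable Dirichlet coefficient decays like $1/p$); the standard fix is a two-step argument (one absorption to get above-critical integrability, then a subcritical Moser iteration), which you flag as ``bookkeeping'' but do not resolve, and it is precisely where a single constant $c_1(n,\Lambda)$ must work for all $\e$ and all $p$. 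With these three points repaired the $\e$-regularity route should go through, but it is substantially heavier than the paper's point-picking argument, which gets the Hessian control for free from Lemma~\ref{lma:localEstimate-fromC^1}.
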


\begin{proof}
Since the assumptions are invariant under parabolic scaling: $\hat g(t)=\lambda^2 g(\lambda^{-2}t)$ and $\hat h=\lambda^2 h$ for $\lambda\geq 1$, we may assume $T=1$.

Let $c_1$ be a small constant to be chosen. By assumption (iii), for the constant $c_1$, we may let $T_1\in (0,1]$ be the maximal time such that the conclusion holds on $(0,T_1)$. If $T_1<1$, then the conclusion fails at $t=T_1$. We claim that if $c_1$ is sufficiently small, then this will be impossible for any $\e\in (0,1)$.

Suppose on the contrary that $T_1<1$, then there is $x_0\in M$ such that
\begin{enumerate}
\item $|\tilde\nabla g(x_0,T_1)|>\frac12 \e T_1^{-1/2}$;
\item  $\sup_{M}|\tilde \nabla g(t)|\leq \e t^{-1/2}$ for all $t\in (0,T_1]$.
\end{enumerate}
By considering $T_1^{-1}g(T_1t)$ and $T_1^{-1}h$,  we may assume that
\begin{enumerate}
\item[(a)] $|\tilde\nabla g(x_0,1)|>\frac12 \e $;
\item[(b)] $\sup_{M}|\tilde \nabla g(t)|\leq \e t^{-1/2}$ for all $t\in (0,1]$.
\end{enumerate}
We remark here that since $T_1<1$, the rescaled background metric still satisfies $(\star)$. Applying Lemma~\ref{lma:localEstimate-fromC^1}, we deduce that $|\tilde\nabla^2g(t)|\leq C_2t^{-1}$ for some $C_2(n,\Lambda)>0$. In particular, for all $x\in B_h(x_0,\delta)$ with $\delta<1$,
\begin{equation}
|\tilde \nabla g(x,1)|\geq |\tilde \nabla g(x_0,1)|-C_2 \delta\geq \frac14 \e,
\end{equation}
provided that we choose $\delta=\frac1{4C_2}\e=c_3\e$ for some small $c_3(n,\Lambda)>0$. Hence,
\begin{equation}
\begin{split}
 \fint_{B_h(x_0,1)}|\tilde \nabla g(1)|^n d\mathrm{vol}_h
&\geq \frac{|B_h(x_0,\delta)|}{|B_h(x_0,1)|}\fint_{B_h(x_0,\delta)}|\tilde \nabla g(1)|^n d\mathrm{vol}_h \\
&\geq  \frac{|B_h(x_0,c_3\e)|}{|B_h(x_0,1)|} \cdot \frac{\e^{n}}{4^{n}} \\[1.5mm]
&\geq \left( c_4 \e^2\right)^n
\end{split}
\end{equation}
for some $c_4(n,\Lambda)>0$. Here we have used the volume comparison theorem to control the volume ratio up to a fixed scale. On the other hand, assumption (ii) implies that
\begin{equation}
\fint_{B_h(x_0,1)}|\tilde \nabla g(1)|^n d\mathrm{vol}_h \leq \left( c_1 \e^2\right)^n
\end{equation}
which is impossible if we choose $c_1(n,\Lambda)=\frac12 c_4>0$.
\end{proof}

By Lemma~\ref{lma:localEstimate-fromC^1conc}, the existence time can be characterized by the gradient concentration and the bi-Lipschitz estimates along the flow. The following Lemma shows that if the gradient concentration is small, then the local $W^{1,n}$ norm can be estimated by the initial value quantitatively.

\begin{lma}\label{borderline-C1}
For any $\Lambda>1$, there is $\delta_1(n,\Lambda)>0$ such that the following holds. Suppose $g(t)$ is a smooth solution to the Ricci-Deturck $h$-flow on $M\times [0,T]$ for some background metric $h$ satisfying $(\star)$ so that
\begin{enumerate}
\item[(i)] $\Lambda^{-1} h\leq g(t)\leq \Lambda h$ on $M\times [0,T]$;
\item[(ii)] $\forall x\in M$ and $t\in (0,T]$,
$$\left(\fint_{B_h(x,1)} |\tilde\nabla g(t)|^n d\mathrm{vol}_h\right)^{1/n} \leq \delta_{1};$$
\end{enumerate}
then there is $L_1(n,\Lambda)>0$ such that for all $x\in M$ and $t\in [0,T]$,
\begin{equation}
\int_{B_h(x,1)} |\tilde\nabla g(t)|^n d\mathrm{vol}_h \leq \int_{B_h(x,2)} |\tilde\nabla g_0|^n d\mathrm{vol}_h+L_1|B_h(x,2)| \cdot t.
\end{equation}
Here $|\Omega|$ denotes the measure of $\Omega$ with respect to $h$.
\end{lma}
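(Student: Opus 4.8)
The plan is to derive a differential inequality for the local energy $E(x,t) := \int_{B_h(x,1)} |\tilde\nabla g(t)|^n \, d\mathrm{vol}_h$ (or rather a cutoff version of it) and then integrate in time. First I would fix a cutoff function $\phi \in C^\infty_c(B_h(x,2))$ with $\phi \equiv 1$ on $B_h(x,1)$, $0 \le \phi \le 1$, and $|\tilde\nabla \phi| \le C(n)$; such a $\phi$ exists with dimensional bounds because of assumption $(\star)$ and the curvature normalization. The quantity to differentiate is $F(t) := \int_M \phi^{2n} |\tilde\nabla g(t)|^n \, d\mathrm{vol}_h$. Using the Ricci-Deturck $h$-flow equation \eqref{eqn: h-flow} together with the Bochner-type computation for the evolution of $|\tilde\nabla g|^2$ along the flow, one obtains schematically
\begin{equation*}
\left(\partial_t - g^{pq}\tilde\nabla_p\tilde\nabla_q\right)|\tilde\nabla g|^2 \le -2g^{pq}\tilde\nabla_p\tilde\nabla g * \tilde\nabla_q\tilde\nabla g + C(n,\Lambda)\Big(|\tilde\nabla g|^4 + |\tilde\nabla g|^2 + |\tilde\nabla g|\,|\tilde\nabla^2 g| \cdot |\tilde\nabla g| + |\tilde\nabla g|^2\Big),
\end{equation*}
where the curvature terms of $h$ contribute the bounded terms via $(\star)$, the cubic/quartic terms in $\tilde\nabla g$ come from the reaction terms in \eqref{eqn: h-flow}, and the bad-sign gradient term $|\tilde\nabla g|\,|\tilde\nabla^2 g|$ can be absorbed into the good negative term $-c|\tilde\nabla^2 g|^2$ by Cauchy-Schwarz at the cost of enlarging the $|\tilde\nabla g|^2$ coefficient.

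Next I would multiply by $\tfrac{n}{2}|\tilde\nabla g|^{n-2}\phi^{2n}$, integrate over $M$, and integrate by parts on the Laplacian term. The integration by parts produces the coercive term $-c(n)\int \phi^{2n}|\tilde\nabla g|^{n-2}|\tilde\nabla^2 g|^2$ plus a cross term $\int |\tilde\nabla g|^{n-1}|\tilde\nabla^2 g|\,\phi^{2n-1}|\tilde\nabla\phi|$ from differentiating the cutoff, which again is absorbed into the coercive term plus $C(n)\int \phi^{2n-2}|\tilde\nabla\phi|^2|\tilde\nabla g|^n$. The upshot is
\begin{equation*}
\frac{d}{dt} F(t) \le -c(n,\Lambda)\int_M \phi^{2n}\big|\tilde\nabla |\tilde\nabla g|^{n/2}\big|^2 \, d\mathrm{vol}_h + C(n,\Lambda)\int_{B_h(x,2)} \big(|\tilde\nabla g|^{n+2} + |\tilde\nabla g|^n + |\tilde\nabla g|^{n-2}\big) \, d\mathrm{vol}_h.
\end{equation*}
The lower-order term $\int |\tilde\nabla g|^{n-2}$ is bounded by $|B_h(x,2)|$ plus $\int |\tilde\nabla g|^n$ by Young's inequality, contributing to the $L_1|B_h(x,2)|$ term. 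The remaining nonlinear term $\int \phi^{2n}|\tilde\nabla g|^{n+2}$ is the crux.

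The main obstacle is exactly this supercritical term $\int \phi^{2n}|\tilde\nabla g|^{n+2}$. The idea, which is why the smallness hypothesis (ii) with the scale-$1$ average $\le \delta_1$ enters, is to apply the local Sobolev inequality \eqref{Sobo-ineq} to the function $\psi := \phi^n |\tilde\nabla g|^{n/2}$ and to factor
\begin{equation*}
\int_M \phi^{2n}|\tilde\nabla g|^{n+2} = \int_M \psi^2 |\tilde\nabla g|^2 \le \left(\int_M \psi^{\frac{2n}{n-2}}\right)^{\frac{n-2}{n}} \left(\int_{B_h(x,2)} |\tilde\nabla g|^n\right)^{\frac{2}{n}} \le C_S \|\tilde\nabla g\|_{L^n(B_h(x,2))}^2 \int_M \big(|\tilde\nabla\psi|^2 + \cdots\big),
\end{equation*}
using Hölder with exponents $\tfrac{n}{n-2}$ and $\tfrac{n}{2}$. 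The Sobolev term $\int |\tilde\nabla\psi|^2$ reproduces $\int \phi^{2n}|\tilde\nabla|\tilde\nabla g|^{n/2}|^2$ plus controllable cutoff errors, so if $\|\tilde\nabla g\|_{L^n(B_h(x,2))}^2 \le (C_S)^{-1} \cdot (2c(n,\Lambda))^{-1} \cdot$ (dimensional factors), i.e.\ if $\delta_1$ is chosen small enough depending only on $n$ and $\Lambda$, this supercritical term is fully absorbed into the coercive Sobolev/Bochner term with a factor $\le \tfrac12$ to spare. After absorption we are left with $\tfrac{d}{dt}F(t) \le C(n,\Lambda)\big(F(t) + |B_h(x,2)|\big)$; but in fact, since assumption (ii) already bounds $F(t) \le \delta_1^n |B_h(x,2)|$ uniformly in $t$, this simplifies to $\tfrac{d}{dt}F(t) \le L_1(n,\Lambda)|B_h(x,2)|$, and integrating from $0$ to $t$ with $F(t) \ge \int_{B_h(x,1)}|\tilde\nabla g(t)|^n$ and $F(0) \le \int_{B_h(x,2)}|\tilde\nabla g_0|^n$ gives precisely the claimed inequality. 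One technical point I would be careful about: since $M$ may be noncompact, to justify the integration by parts and the differentiation under the integral sign one uses that assumption (iii)-type bounds (here packaged into the hypotheses and Lemma~\ref{lma:localEstimate-fromC^1}) guarantee all integrands decay or are controlled on the compactly supported cutoff region, so no boundary terms at infinity appear.
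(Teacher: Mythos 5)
Your proposal follows essentially the same route as the paper's proof: the same evolution inequality for $|\tilde\nabla g|^2$, multiplication by a cutoff times $|\tilde\nabla g|^{n-2}$, integration by parts, Hölder plus the local Sobolev inequality applied to the cutoff times $|\tilde\nabla g|^{n/2}$, absorption of the supercritical $|\tilde\nabla g|^{n+2}$ term using the smallness $\delta_1$ from hypothesis (ii) via a covering argument, and time integration. The only cosmetic differences are that the paper regularizes with $Q=\sqrt{|\tilde\nabla g|^2+\sigma^2}$ to avoid differentiating the possibly non-smooth $|\tilde\nabla g|$, and keeps the $|B_h(x,2)|^{\pm 2/n}$ normalization factors explicit in the Sobolev step so that they visibly cancel against the bound from (ii); both are details you would naturally supply when writing the argument out.
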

\begin{proof}
In what follows, we will use $C_i$ to denote constants depending only on $n$ and $\Lambda$. Along the Ricci-Deturck $h$-flow, direct computation and the Cauchy-Schwarz inequality show that
\begin{equation}
\frac{\de}{\de t}|\tinabla g|^{2}+\Lambda^{-1}|\tinabla^{2}g|^{2}
\leq g^{ab}\tinabla_{a}\tinabla_{b}|\tinabla g|^{2}+C_1|\tinabla g|^{4}+C_1.
\end{equation}
Since $|\tinabla g|$ may be not smooth, then for sufficiently small $\sigma\in(0,\delta_{1})$, we define the smooth function $Q$ by
\begin{equation}
Q = \sqrt{|\tinabla g|^{2}+\sigma^{2}}.
\end{equation}
It is clear that $|\tinabla Q|\leq|\tinabla^{2}g|$. Then
\begin{equation}\label{W 1 n estimate eqn 1}
\frac{\de}{\de t}Q^{2}+\Lambda^{-1}|\tinabla Q|^{2} \leq g^{ab}\tinabla_{a}\tinabla_{b}Q^{2}+C_2(Q^{2}+1).
\end{equation}
Let $\eta$ be a cut-off function on $M$ such that $\eta\equiv1$ on $B_{h}(x,1)$, $\eta\equiv0$ on $M\setminus B_{h}(x,2)$ and $|\tilde\nabla \eta|\leq 10^4$. Then \eqref{W 1 n estimate eqn 1} implies that
\begin{equation}\label{W 1 n estimate eqn 2}
\begin{split}
& \int_{M}\eta^{4}Q^{n-2}\cdot \partial_tQ^{2}d\mathrm{vol}_{h}
+\Lambda^{-1}\int_{M}\eta^{4}Q^{n-2}|\tinabla Q|^{2}d\mathrm{vol}_{h} \\
\leq {} & \int_{M}\eta^{4}Q^{n-2}g^{ab}\tinabla_{a}\tinabla_{b}Q^{2}d\mathrm{vol}_{h}+C_3\int_{M}\eta^{4}(Q^{n}+1)d\mathrm{vol}_{h}.
\end{split}
\end{equation}
For the first term on the right hand side, integrating by parts and using the Cauchy-Schwarz inequality,
\begin{equation}
\begin{split}
& \int_{M}\eta^{4}Q^{n-2}g^{ab}\tinabla_{a}\tinabla_{b}Q^{2}d\vol_{h} \\
= {} & -\int_{M}\tinabla_{a}(\eta^{4}Q^{n-2}g^{ab})\tinabla_{b}Q^{2}d\vol_{h} \\
\leq {} & -2(n-2)\Lambda^{-1}\int_{M}\eta^{4}Q^{n-2}|\tinabla Q|^{2}d\vol_{h}
+C_4\int_{M}(\eta^{4}Q^{n}+\eta^{3}Q^{n-1})|\tinabla Q|d\vol_{h} \\
\leq {} & -(n-2)\Lambda^{-1}\int_{M}\eta^{4}Q^{n-2}|\tinabla Q|^{2}d\vol_{h}+C_5\int_{M}(\eta^{4}Q^{n+2}+\eta^{2}Q^{n})d\vol_{h}.
\end{split}
\end{equation}
Substituting this into \eqref{W 1 n estimate eqn 2},
\begin{equation}\label{W 1 n estimate eqn 3}
\begin{split}
& \frac{2}{n}\cdot\frac{\de}{\de t}\left(\int_{M}\eta^{4}Q^{n}d\vol_{h}\right)+\frac1{2\Lambda}\int_{M}\eta^{4}Q^{n-2}|\tinabla Q|^{2}d\vol_{h} \\
\leq {} & C_6\int_{M}(\eta^{4}Q^{n+2}+\eta^{2}Q^{n}+\eta^{4}Q^{n}+\eta^{4})d\vol_{h}.
\end{split}
\end{equation}
Since $\eta$ is compactly supported on $B_h(x,2)$, the Sobolev inequality \eqref{Sobo-ineq} implies
\begin{equation}\label{W 1 n estimate eqn 4}
\begin{split}
& \left(\int_{M}(\eta^{4}Q^{n})^{\frac{n}{n-2}}d\vol_{h}\right)^{\frac{n-2}{n}}\\[1mm]
\leq{}& C_S \cdot |B_h(x,2)|^{-2/n}\cdot  \int_{M}|\tinabla(\eta^{2}Q^{\frac{n}{2}})|^{2}d\vol_{h} \\
\leq {} & C_7\cdot|B_h(x,2)|^{-2/n}\cdot\left(\int_{M}\eta^{4}Q^{n-2}|\tinabla Q|^{2}d\vol_{h}
+\int_{M}\eta^{2}Q^{n}d\vol_{h}\right).
\end{split}
\end{equation}
Combining \eqref{W 1 n estimate eqn 3} and \eqref{W 1 n estimate eqn 4}, we conclude that
\begin{equation}\label{energy-ineq}
\begin{split}
& \frac{2}{n}\cdot\frac{\de}{\de t}\left(\int_{M}\eta^{4}Q^{n}d\vol_{h}\right)
+\frac{|B_h(x,2)|^{2/n}}{2C_7\Lambda}\left(\int_{M}(\eta^{4}Q^{n})^{\frac{n}{n-2}}d\vol_{h}\right)^{\frac{n-2}{n}} \\[1.5mm]
\leq {} & C_8\int_{M}(\eta^{4}Q^{n+2}+\eta^{2}Q^{n}+\eta^{4}Q^{n}+\eta^{4})d\vol_{h} \\
\leq {} & C_9\left(\int_{B_{h}(x,2)}Q^{n}d\vol_{h}\right)^{\frac{2}{n}}\left(\int_{M}(\eta^{4}Q^{n})^{\frac{n}{n-2}}d\vol_{h}\right)^{\frac{n-2}{n}}
+C_9\int_{B_{h}(x,2)}(Q^{n}+1)d\vol_{h}.
\end{split}
\end{equation}
Now we are going to control the first term on the right hand side by using assumption (ii). Let $\{z_i\}_{i=1}^N$ be a maximal set in $B_h(x,2)$ so that $d_h(z_i,z_j)>\frac14$ and
\begin{equation}
B_h(x,2)\subset \bigcup_{i=1}^N B_h(z_i,1)\subset B_h(x,4).
\end{equation}
Then the standard volume comparison argument shows that $N\leq C_n$ as $|\Rm(h)|\leq 1$.
Together with the definition of $Q$ and $\sigma\in(0,\delta_{1})$, we conclude that
\begin{equation}
\begin{split}
\int_{B_{h}(x,2)}Q^{n}d\vol_{h} &\leq \sum_{i=1}^N \int_{B_h(z_i,1)} |\tilde \nabla g(t)|^n d\mathrm{vol}_h+C_n\cdot \sigma^{n}\cdot|B_h(x,4)|\\[1mm]
&\leq C_n \cdot\delta_{1}^n \cdot |B_h(x,4)|\\[2mm]
&\leq \hat C_n\cdot\delta_{1}^n \cdot |B_h(x,2)|.
\end{split}
\end{equation}
Here we have used the volume comparison theorem in the last inequality. By putting this into \eqref{energy-ineq}, if we choose $\delta_{1}$ such that $C_{9}\hat{C}_{n}^{2/n}\delta_{1}^{2}\leq (2C_{7}\Lambda)^{-1}$, then
\begin{equation}
\frac{\de}{\de t}\left(\int_{M}\eta^{4}Q^{n}d\vol_{h}\right) \leq C_{10} |B_h(x,2)|.
\end{equation}
After integrating both sides on $[0,t]$ and letting $\sigma\to 0$, we obtain
\begin{equation}
\int_{B_{h}(x,1)}|\tinabla g(t)|^{n}d\vol_{h}
\leq \int_{B_{h}(x,2)}|\tinabla g(0)|^{n}d\vol_{h}+C_{10}|B_h(x,2)|\cdot t
\end{equation}
for all $(x,t)\in M\times[0,T]$.  This completes the proof by applying the volume comparison theorem again.
\end{proof}

Next, we consider the trace estimate.
\begin{lma}\label{borderline-trace}
For any $\Lambda_0>1$, there is $\delta_2(n,\Lambda_0),S(n,\Lambda_0)>0$ such that the following holds. Suppose $g(t)$ is a smooth solution to the Ricci-Deturck $h$-flow on $M\times [0,T]$ for some background metric $h$ satisfying $(\star)$ so that
\begin{enumerate}
\item[(i)] $\Lambda_0^{-1} h\leq g_0\leq \Lambda_0 h$ on $M$;
\item[(ii)] $\forall x\in M$ and $t\in (0,T]$,
$$\left(\fint_{B_h(x,1)} |\tilde\nabla g(t)|^n d\mathrm{vol}_h\right)^{1/n} \leq \delta_2;$$
\item[(iii)] $\sup_{M\times [0,T]} |\tilde \nabla^k g(t)|<+\infty$ for all $k\in \mathbb{N}$.
\end{enumerate}
then on $M\times [0,T\wedge S]$,
\begin{equation}
\Lambda^{-1}h\leq g(t)\leq \Lambda h
\end{equation}
where $\Lambda=\Upsilon_n\Lambda_0$ for some large dimensional constant $\Upsilon_n>0$.
\end{lma}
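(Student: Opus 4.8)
Here is the plan. It is a continuity argument in the time variable for the bi-Lipschitz constant, the core being a Duhamel estimate for the traces $\tr_h g(t):=h^{ij}g_{ij}(t)$ and $\tr_g h(t):=g^{ij}(t)h_{ij}$. Fix a large dimensional constant $\Upsilon_n$ (pinned down at the end), set $\Lambda:=\Upsilon_n\Lambda_0$, and let $S_1\in(0,T]$ be the largest time with $(2\Lambda)^{-1}h\le g(t)\le 2\Lambda h$ on $M\times[0,S_1]$; such $S_1>0$ exists by (i) and smoothness. I want to show there is $S=S(n,\Lambda_0)\in(0,1]$ so that on $M\times[0,S_1\wedge S]$ one in fact has $g(t)\le (n+2)\Lambda_0\,h$ and $g(t)\ge ((n+2)\Lambda_0)^{-1}h$; with $\Upsilon_n= 2(n+2)$ this is a strict improvement of the bootstrap, so by continuity (and smoothness in $t$) it forces $S_1\ge S\wedge T$, and then the improved bounds hold on $M\times[0,T\wedge S]$, which is the assertion with $\Lambda=\Upsilon_n\Lambda_0$. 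Reducing to the two traces is linear algebra: if $\lambda_1,\dots,\lambda_n>0$ are the eigenvalues of $g$ relative to $h$ then $\max_i\lambda_i\le\sum_i\lambda_i=\tr_h g$ and $\max_i\lambda_i^{-1}\le\sum_i\lambda_i^{-1}=\tr_g h$, so $(\tr_g h)^{-1}h\le g\le(\tr_h g)h$; at $t=0$ both traces are $\le n\Lambda_0$ by (i).

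Next I would record the evolution of the traces. Since $h$ is $\tilde\nabla$-parallel, contracting \eqref{eqn: h-flow} with $h^{ij}$ (and likewise computing $\partial_t g^{ij}$ and contracting with $h_{ij}$) gives, on the bootstrap interval,
\[
\big(\partial_t - g^{pq}\tilde\nabla_p\tilde\nabla_q\big)\tr_h g\le C_0\big(|\tilde\nabla g|^2+1\big),\qquad \big(\partial_t - g^{pq}\tilde\nabla_p\tilde\nabla_q\big)\tr_g h\le C_0\big(|\tilde\nabla g|^2+1\big),
\]
with $C_0=C_0(n,\Lambda)$: the curvature terms in \eqref{eqn: h-flow} are bounded using $|\Rm(h)|\le1$ and the bootstrap equivalence, and the remaining terms are quadratic in $\tilde\nabla g$. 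Write $L:=g^{pq}(t)\tilde\nabla_p\tilde\nabla_q$, a uniformly elliptic non-divergence operator with $L1=0$. The role of (ii) is that for $t\le1$ it upgrades to scale $\sqrt t$: by Bishop--Gromov ($|\Rm(h)|\le1$), $t^{n/2}\fint_{B_h(x,\sqrt t)}|\tilde\nabla g(t)|^n\le c_n^{-1}\fint_{B_h(x,1)}|\tilde\nabla g(t)|^n\le c_n^{-1}\delta_2^n$, so Lemma~\ref{lma:localEstimate-fromC^1conc} (applicable by (iii) and the bootstrap) yields $\sup_M|\tilde\nabla g(t)|\le \e\,t^{-1/2}$ on $(0,S_1\wedge T\wedge1]$ with $\e=\e(n,\Lambda_0,\delta_2)\to0$ as $\delta_2\to0$, and Lemma~\ref{lma:localEstimate-fromC^1} then bounds all higher $\tilde\nabla^k g$, so the coefficients of $L$ are smooth with bounded derivatives on that interval.

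The heart of the matter is the Duhamel bound for $\tr_h g$. Let $K(x,t;y,s)$ be the fundamental solution of $\partial_t-L$; by uniform ellipticity and the bounds on the coefficients it obeys a Gaussian upper bound $K(x,t;y,s)\le C(n,\Lambda)(t-s)^{-n/2}\exp(-d_h(x,y)^2/(C(t-s)))$, while $e^{\tau L}1=1$ and $e^{tL}\tr_h g(\cdot,0)\le n\Lambda_0$ by the maximum principle. Hence
\[
\tr_h g(x,t)\le n\Lambda_0+C_0 t+C_0\int_0^t\!\!\int_M K(x,t;y,s)\,|\tilde\nabla g(y,s)|^2\,d\mathrm{vol}_h(y)\,ds.
\]
For the last term, on $B_h(x,r)$ with $r\le\sqrt{t-s}$ I would combine $|\tilde\nabla g(\cdot,s)|\le\e s^{-1/2}$ with (ii) to get $\int_{B_h(x,r)}|\tilde\nabla g(s)|^n\le C\min\big((\e r/\sqrt s)^n,\delta_2^n\big)$; decomposing the $y$-integral into $B_h(x,\sqrt{t-s})$ and dyadic annuli (where the Gaussian weight is super-exponentially small) and applying Hölder gives $\int_M K(x,t;y,s)|\tilde\nabla g(s)|^2\,d\mathrm{vol}_h\le C\min\big(\e^2/s,\delta_2^2/(t-s)\big)+C_n\min(\e^2/s,\delta_2^2)$. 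Integrating in $s$: the first term contributes $\lesssim\delta_2^2\log(\e/\delta_2)$ (the regimes $s\lessgtr \e^2 t/(\e^2+\delta_2^2)$ give a logarithm and a $\delta_2^2$ respectively) and the second $\lesssim\delta_2^2 t$, both $\to0$ with $\delta_2$. Thus $\tr_h g(x,t)\le n\Lambda_0+C_0 t+C\delta_2^2\log(\e/\delta_2)+C_n\delta_2^2 t$; choosing first $\delta_2=\delta_2(n,\Lambda_0)$ small and then $S=S(n,\Lambda_0)\in(0,1]$ small so that the last three terms sum to $\le\Lambda_0$ for $t\le S$ gives $\tr_h g\le(n+2)\Lambda_0$ on $[0,S_1\wedge S]$, and identically $\tr_g h\le(n+2)\Lambda_0$. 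This is the strict improvement above, and the continuity argument closes.

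The hard part is exactly this last step: $|\tilde\nabla g|^2$ sits at the critical Lebesgue scaling, so neither tool from the smooth theory works on its own — a plain maximum principle fails because $|\tilde\nabla g|^2\lesssim\e^2/t$ is not time-integrable near $0$, and treating $|\tilde\nabla g|^2$ merely as $L^{n/2}_x$ forcing would only control $\tr_h g$ up to a BMO-type defect. What saves it is using the two a priori gradient controls simultaneously through the heat-kernel smoothing: near $s=t$ the sharp pointwise bound $\e s^{-1/2}$ kills any genuine divergence, while away from $s=t$ the scale-$1$ smallness $\delta_2$ makes the (only logarithmically divergent) tail as small as desired. Carrying this out quantitatively — keeping $\e$ slaved to $\delta_2$ and all constants dependent only on $n$ and $\Lambda_0$ — is the delicate bookkeeping; everything else (the trace computation, the linear algebra, the continuity mechanism) is routine.
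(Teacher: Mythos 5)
Your overall scheme (bootstrap on the bi-Lipschitz constant, reduce to the traces $\tr_hg$ and $\tr_gh$, evolution inequality with forcing $C(|\tilde\nabla g|^2+1)$, upgrade (ii) to the scale-$\sqrt t$ bound and invoke Lemma~\ref{lma:localEstimate-fromC^1conc} to get $|\tilde\nabla g|\leq \varepsilon t^{-1/2}$) matches the paper up to that point, but your key step is genuinely different: you close the estimate with a pointwise Duhamel representation against the fundamental solution of $\partial_t-g^{pq}(t)\tilde\nabla_p\tilde\nabla_q$, whereas the paper never touches heat kernels. It instead argues by contradiction at the first failure time $T_1$: the gradient bound converts a pointwise failure $\tr_hg(x_0,T_1)>\tfrac12\Lambda$ into a lower bound for $\int_{B_h(x_0,r)}\tr_hg$, while integrating the trace inequality against a cutoff (integration by parts, H\"older with (ii), volume comparison) gives the upper bound $\int_{B_h(x_0,r)}\tr_hg\big|_{T_1}\leq |B_h(x_0,2r)|\big(n\Lambda_0+CT_1+CT_1\delta_2 r^{-2}\big)$; choosing $r=\sqrt{T_1}$ forces $T_1\geq S(n,\Lambda_0)$. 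This only uses volume ratios, so it is insensitive to all the kernel issues below.

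The genuine gap in your version is the asserted bound $K(x,t;y,s)\leq C(n,\Lambda)(t-s)^{-n/2}\exp\big(-d_h(x,y)^2/C(t-s)\big)$. First, the operator is in non-divergence form with time-dependent coefficients; Gaussian upper bounds with constants depending only on ellipticity are not a standard fact there. The coefficient-regularity you invoke is either the qualitative hypothesis (iii), whose bounds are not controlled by $(n,\Lambda_0)$ and would contaminate $\delta_2$ and $S$ (defeating the uniform statement), or the derived bound $|\tilde\nabla g|\leq\varepsilon s^{-1/2}$, which blows up as $s\to0$, so a parametrix constant is again not uniform. Rewriting in divergence form produces a drift of size $\varepsilon s^{-1/2}$, i.e.\ a critically scaling, time-singular drift, for which uniform Gaussian bounds require a genuine argument exploiting the smallness of $\varepsilon$ — this is precisely the ``heart of the matter'' and cannot be waved through as ``uniform ellipticity plus bounded coefficients''. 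Second, even for the Laplacian of $h$ the bound is false as you state it: $(\star)$ contains no non-collapsing assumption (only curvature-derivative bounds and a diameter lower bound), and on a collapsed flat manifold the on-diagonal kernel exceeds $(t-s)^{-n/2}$ once $\sqrt{t-s}$ passes the injectivity radius; the correct normalization is $C/|B_h(x,\sqrt{t-s})|$, which itself would still need proof in your setting. The surrounding bookkeeping (the $\min(\varepsilon^2/s,\delta_2^2/(t-s))$ splitting and the $\delta_2^2\log$ integral) is fine, but until kernel bounds with constants depending only on $n$ and $\Lambda_0$ are actually established — or the pointwise Duhamel step is replaced by an integrated, cutoff version of the same comparison, as in the paper — the proof is incomplete.
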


\begin{proof}
We assume $T\leq 1$.  By assumptions (i) and (iii),  we may find $T_1>0$ such that for all $t\in [0,T_1)$, the conclusion holds. If $T_1<1$, then the conclusion fails at $t=T_1$.  It suffices to show that if $\delta_{2}(n,\Lambda_0)$ is sufficiently small, then $T_1$ is bounded from below by constant depending only on $n$ and $\Lambda_0$.   We will use $C_i$ to denote constants depending only on $n$ and $\Lambda_0$.

For any $x\in M$ and $t\in (0,T_1]$, then assumption (ii) and the volume comparison theorem imply
\begin{equation}
\begin{split}
t^{n/2} \fint_{B_h(x,\sqrt{t})} |\tilde \nabla g(t)|^n d\mathrm{vol}_h
&=\frac{t^{n/2}}{|B_h(x,\sqrt{t})|} \int_{B_h(x,\sqrt{t})} |\tilde \nabla g(t)|^n d\mathrm{vol}_h \\
&\leq t^{n/2}\cdot \frac{|B_h(x,1)|}{|B_h(x,\sqrt{t})|} \fint_{B_h(x,1)} |\tilde \nabla g(t)|^n d\mathrm{vol}_h \\
&\leq  t^{n/2} \cdot \frac{V_{-1}(1)}{V_{-1}(\sqrt{t})}\cdot \delta_{2}^{n}\\[1.5mm]
&\leq C_n\delta_{2}^{n} .
\end{split}
\end{equation}
where $V_{-1}(r)$ denotes the volume of geodesic ball with radius $r$ in the space form $M_{-1}^{n}$. Since on $M\times [0,T_1]$,  we have $\Lambda^{-1}h\leq g(t)\leq \Lambda h$. By Lemma~\ref{lma:localEstimate-fromC^1conc}, for all $t\in (0,T_1]$,
\begin{equation}\label{pre-C1}
\sup_M |\tilde \nabla g(t)|\leq \frac{C_0\delta_{2}^{1/2}}{\sqrt{t}}
\end{equation}
for some constant $C_0(n,\Lambda_0)>0$ as long as $\delta_{2}$ is sufficiently small depending only on $n$ and $\Lambda_0$.

We want to consider the upper bound of the metric $g(t)$ at $t=T_1$. It suffices to control the upper bound of $\tr_hg$.  Since the conclusion fails at $t=T_1$, there is $x_0\in M$ such that $\tr_hg (x_0,T_1)>\frac12\Lambda$.  By \eqref{pre-C1}, for all $z\in B_h(x_0,r)$,
\begin{equation}
\tr_hg (x,T_1)\geq \tr_hg(x_0,T_1)-C_1\delta_{2}^{1/2} T_1^{-1/2}r
\end{equation}
and hence
\begin{equation}\label{inter-trace-1}
\begin{split}
\left(\int_{B_h(x_0,r)} \tr_hg\; d\mathrm{vol}_h\right)\bigg|_{t=T_1}
\geq \left( \frac12 \Lambda-C_1\delta_{2}^{1/2} T_1^{-1/2}r\right)|B_h(x_0,r)|.
\end{split}
\end{equation}
When $r$ is small enough so that the right hand side is positive. We may assume $r\leq \frac12$ and fix the choice of $r$ later.

On the other hand, taking trace on \eqref{eqn: h-flow} infers that
\begin{equation}\label{equ-trace}
\frac{\de}{\de t}\tr_{h}g
\leq g^{ab}\tinabla_{a}\tinabla_{b}\tr_hg+C_2(|\tinabla g|^{2}+1).
\end{equation}
Let $\eta$ be a cut-off function on $M$ such that $\eta \equiv 1$ on $B_{h}(x_{0},r)$, $\eta\equiv0$ on $M\setminus B_{h}(x_{0},2r)$ and $|\tinabla\eta|\leq 10^4r^{-1}$. Integrating \eqref{equ-trace} and using integration by parts yield that
\begin{equation}
\begin{split}
& \frac{\de}{\de t}\int_{B_{h}(x_{0},2r)} \eta\tr_{h}g(t)\; d\vol_{h} \\
\leq {} & C_3\int_{B_{h}(x_{0},2r)}|\tinabla \eta|\cdot|\tinabla g(t)|\;d\vol_{h}
+C_2\int_{B_{h}(x_{0},2r)}(|\tinabla g(t)|^{2}+1)d\vol_{h} \\
\leq {} & C_3\left(\int_{B_{h}(x_{0},2r)}|\tinabla\eta|^{\frac{n}{n-1}}d\vol_{h}\right)^{\frac{n-1}{n}}
\left(\int_{B_{h}(x_{0},2r)}|\tinabla g(t)|^{n}d\vol_{h}\right)^{\frac{1}{n}} \\
& +C_2|B_h(x_0,2r)|^{\frac{n-2}{n}}
\left(\int_{B_{h}(x_{0},2r)}|\tinabla g(t)|^{n}d\vol_{h}\right)^{\frac{2}{n}}+C_2|B_h(x_0,2r)|\\
\leq {} &  C_3r^{-1}\delta_{2}|B_h(x_0,2r)|^\frac{n-1}{n}|B_h(x_0,1)|^\frac1n
+C_2\delta_{2}^2 |B_h(x_0,2r)|^{\frac{n-2}{n}} |B_h(x_0,1)|^\frac{2}{n}\\[2.5mm]
&+C_2|B_h(x_0,2r)|\\[2mm]
\leq {} &C_4 (\delta_{2} r^{-2}+1) |B_h(x_0,2r)|.
\end{split}
\end{equation}
Here we have used the volume comparison theorem in the last inequality. By integrating the above on $[0,T_1]$ together with (i), we conclude that
\begin{equation}
\begin{split}
&\quad \left(\int_{B_h(x_0,r)} \tr_hg\; d\mathrm{vol}_h\right)\bigg|_{t=T_1}\\
&\leq \int_{B_h(x_0,2r)} \tr_hg_0\; d\mathrm{vol}_h+ C_4 T_1(\delta_{2} r^{-2}+1) |B_h(x_0,2r)|\\[2mm]
&\leq   |B_h(x_0,2r)| \left( n\Lambda_0+C_4 T_1+C_4 T_1\delta_{2} r^{-2}  \right).
\end{split}
\end{equation}
Combining this with \eqref{inter-trace-1} and the volume comparison theorem,
\begin{equation}
\Upsilon_n \Lambda_0-C_1\delta_{2}^{1/2} T_1^{-1/2}r \leq  C_n \left( n\Lambda_0+C_4 T_1+C_4 T_1\delta_{2} r^{-2} \right).
\end{equation}
If we fix $\Upsilon_n=2nC_n$ and choose $r=T_1^{1/2}$, then we see that $T_1\geq C_5^{-1}$ if $\delta_{2}$ is small enough depending only on $n$ and $\Lambda_0$. This completes the proof on the upper bound of $g(t)$.

The lower bound of $g(t)$ can be proved using an identical argument by considering the function $\tr_{g(t)}h$ instead.  The evolution equation of $\tr_{g(t)}h$ is almost identical to that of \eqref{equ-trace}, see \cite{Shi1989,Simon2002} for example.
\end{proof}

The following proposition shows that the solution of $h$-flow will be uniformly smooth up to $t=0$ on region where $g_0$ is smooth.  This will be useful in studying the regularity of singular metric which is smooth outside some singular set.
\begin{prop}\label{prop:local-smoothness-Ck}
Suppose $g(t)$ is a smooth solution to the Ricci-Deturck $h$-flow on $M\times [0,T]$ for some background metric $h$ satisfying $(\star)$ so that
\begin{enumerate}
\item[(i)] $\Lambda^{-1}h\leq g(t)\leq \Lambda h$ on $M\times [0,T]$;
\item[(ii)] For all $k\in \mathbb{N}$, there is $C(n,k,\Lambda)>0$ such that for all $t\in (0,T]$,
$$\sup_M |\tilde\nabla^k g(t)|\leq C(n,k,\Lambda)t^{-k/2};$$
\item[(iii)] There is $\Omega\Subset M, k_0\in \mathbb{N}$ so that $\sum_{m=1}^{k_0}\sup_\Omega|\tilde\nabla^mg_0| <\Upsilon_0$;
\end{enumerate}
then for all $\tilde\Omega\Subset \Omega$,  there is $L(n,h,\tilde\Omega,\Omega,\Lambda,\Upsilon_0)$ such that for all $t\in [0,T]$,
\begin{equation}
\sup_{\tilde\Omega} \sum_{m=1}^{k_0}|\tilde\nabla^{m} g(t)|\leq L.
\end{equation}
\end{prop}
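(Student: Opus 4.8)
The plan is to prove the proposition by a bootstrap/induction on the order of derivatives $m$, combining interior parabolic estimates for the strictly parabolic system \eqref{eqn: h-flow} with the key observation that the initial data $g_0$ is controlled in $C^{k_0}$ only on $\Omega$, not on all of $M$. The global hypotheses (i)–(ii) already give us the full interior smoothing for $t>0$; what must be shown is that these bounds do \emph{not} blow up as $t\to 0$ once we restrict to a slightly smaller domain $\tilde\Omega\Subset\Omega$. First I would fix a chain of nested open sets $\tilde\Omega\Subset\Omega_{k_0}\Subset\Omega_{k_0-1}\Subset\cdots\Subset\Omega_1\Subset\Omega_0=\Omega$, and prove by induction on $m=0,1,\dots,k_0$ that $\sum_{j=0}^m\sup_{\Omega_m\times[0,T]}|\tilde\nabla^j g(t)|\le L_m$ for a constant $L_m$ depending only on $n,h,\Omega_m,\Omega,\Lambda,\Upsilon_0$. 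The base case $m=0$ is just hypothesis (i).

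For the inductive step, suppose the bound holds up to order $m-1$ on $\Omega_{m-1}$. Differentiating the parabolic system \eqref{eqn: h-flow} $m$ times in the $\tilde\nabla$-connection, one obtains a parabolic equation for $\tilde\nabla^m g$ of the schematic form
\begin{equation*}
\left(\frac{\partial}{\partial t}-g^{pq}\tilde\nabla_p\tilde\nabla_q\right)\tilde\nabla^m g = \tilde\nabla^m g * g^{-1} * \tilde\nabla g + \sum_{\substack{a+b\le m+1\\ a,b\le m-1}}\tilde\nabla^a g * \tilde\nabla^b g * (\cdots) + (\text{lower order, bounded by induction}),
\end{equation*}
where the $*$-notation hides contractions with $g$, $g^{-1}$, $h$, and covariant derivatives of $\Rm(h)$ (all bounded by $(\star)$ and (i)). The point is that on $\Omega_{m-1}$ all the genuinely lower-order terms are bounded by the inductive hypothesis, and the coefficient $g^{pq}$ of the Laplacian is uniformly elliptic by (i); moreover the first-order coefficient $g^{-1}*\tilde\nabla g$ is, for $t>0$, bounded by $C t^{-1/2}$ via (ii), which is \emph{integrable in time} — precisely the feature that prevents a logarithmic/power blow-up. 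Then I would run a localized energy estimate with a cutoff $\eta$ supported in $\Omega_{m-1}$ and equal to $1$ on $\Omega_m$: multiply by $\eta^2\tilde\nabla^m g$, integrate, integrate by parts, use the ellipticity of $g^{pq}$ to absorb the good gradient term, and control the first-order term using Young's inequality together with the $t^{-1/2}$ bound, so that a Gronwall argument with an $L^1$-in-time coefficient closes to give an $L^2$ bound on $\tilde\nabla^m g$ over $\Omega_m$ uniform up to $t=0$. Converting this $L^2$ bound into the desired $L^\infty$ bound on a still-smaller domain is then done either by De Giorgi–Nash–Moser iteration applied to $|\tilde\nabla^m g|^2$ (which satisfies a differential inequality of the form $(\partial_t-g^{pq}\tilde\nabla_p\tilde\nabla_q)|\tilde\nabla^m g|^2\le -c|\tilde\nabla^{m+1}g|^2 + C(1+|\tilde\nabla^m g|^2)$ modulo the controlled lower-order junk) or, more cleanly, by invoking the interior Schauder/$L^p$ parabolic estimates for the system once we know $g$ is $C^{m-1,\alpha}$ on $\Omega_{m-1}$ uniformly. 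Iterating from $m=1$ up to $m=k_0$ yields the claim with $\tilde\Omega=\Omega_{k_0}$, or one shrinks once more.

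The main obstacle I anticipate is \emph{handling the boundary of the region where the data is controlled}: because $g_0$ is only assumed smooth on $\Omega$ with no control near $\partial\Omega$, every estimate must be genuinely interior, and the cutoff functions force one to lose a little domain at each of the $k_0$ inductive steps — so the constant $L$ legitimately depends on the geometry of the inclusion $\tilde\Omega\Subset\Omega$ (through $\operatorname{dist}_h(\tilde\Omega,\partial\Omega)$ and intermediate distances), which is why $L=L(n,h,\tilde\Omega,\Omega,\Lambda,\Upsilon_0)$ in the statement. A secondary technical point is bookkeeping the commutator terms $[\partial_t-g^{pq}\tilde\nabla_p\tilde\nabla_q,\tilde\nabla^m]$: commuting $\tilde\nabla^m$ past the (evolving) Laplacian $g^{pq}\tilde\nabla_p\tilde\nabla_q$ produces terms involving $\tilde\nabla^a(g^{-1})$ contracted with $\tilde\nabla^b\tilde\nabla^2 g$ for $a+b=m$, and one must check that the top-order piece ($a=0$) is exactly the principal operator again while the remaining pieces are controlled — either by the inductive hypothesis (when $b\le m-1$, so the total derivative count on $g$ is $\le m+1$ but distributed favorably) or absorbed into the good term using the $t^{-1/2}$-integrable gradient bound. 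This is routine but must be done carefully; the structure is precisely the same as in the interior smoothing estimates of Shi \cite{Shi1989} and Simon \cite{Simon2002}, adapted to the present setting where hypotheses (i)–(ii) already supply all the \emph{global} bounds and only the $t\to0$ behavior over $\tilde\Omega$ is at issue.
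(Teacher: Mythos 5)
Your overall architecture (induct on the derivative order, localize with cutoffs on a nested chain of domains, accept loss of domain at each step) matches the spirit of the paper, which likewise reduces to the case $k_0=1$ and treats higher orders by Shi's scheme. But the core analytic step of your proposal has a genuine gap: the claim that the first-order term can be absorbed ``using Young's inequality together with the $t^{-1/2}$ bound, so that a Gronwall argument with an $L^1$-in-time coefficient closes'' is not correct. The Ricci--DeTurck equation carries a quadratic gradient nonlinearity, so the equation for $\tilde\nabla g$ contains terms of the schematic form $g^{-1}*g^{-1}*\tilde\nabla g*\tilde\nabla^2 g$ and $(\tilde\nabla g)^3$. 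In the energy (or pointwise Bochner) estimate these produce, after Young's inequality, a term of order $|\tilde\nabla g|^4$; since on the support of the cutoff you do not yet know $|\tilde\nabla g|$ is bounded up to $t=0$, the only available bound is hypothesis (ii), giving a Gronwall coefficient of order $t^{-1}$ --- you square the $t^{-1/2}$ drift, and $t^{-1}$ is \emph{not} integrable. Integrating from $t=0$ then gives growth like $(t/\delta)^{C}$ as $\delta\to 0$, so the estimate does not close. For the same reason, your asserted inequality $(\partial_t-g^{pq}\tilde\nabla_p\tilde\nabla_q)|\tilde\nabla^m g|^2\le -c|\tilde\nabla^{m+1}g|^2+C(1+|\tilde\nabla^m g|^2)$ is false at $m=1$ without an a priori gradient bound (the $|\tilde\nabla g|^4$ term is not lower order), and the fallback via interior Schauder is circular: uniform H\"older control of the coefficients $g^{pq}$ up to $t=0$ is essentially what you are trying to prove. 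No smallness of the gradient concentration is assumed in this proposition, so you cannot buy integrability that way either.

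The missing idea, which is exactly how the paper closes the estimate, is to link the size of the hypothetical blow-up to the time at which it occurs. One proves the distance-weighted bound $|\tilde\nabla g(x,t)|<L\,\rho(x)^{-1}$ by a first-failure-time argument: if it first fails at $(x_1,T_1)$ with scale $\rho_1=\rho(x_1)$, then the \emph{global} smoothing bound (ii) forces $L\rho_1^{-1}\le C_0T_1^{-1/2}$, i.e.\ after the parabolic rescaling by $r=\rho_1/8$ the total rescaled time $\tilde T_1$ satisfies $L^2\tilde T_1\le C$. On the rescaled ball one has the a priori bound $|\nabla^{\hat h}\hat g|\le L/4$ up to $\tilde T_1$, and a Shi-type test function $F=e^{-\tilde CL^2t}\eta^2|\nabla^{\hat h}\hat g|^2+\tilde C e^{-C_1L^2t}\tr_{\hat h}\hat g$ becomes a subsolution: the exponential weights, harmless precisely because $L^2 t\le C$, absorb the quartic and drift terms, and the trace term supplies the extra $-|\nabla^{\hat h}\hat g|^2$ needed. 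The maximum principle then bounds $F(x_1,\tilde T_1)$ independently of $L$, while the failure condition forces $F(x_1,\tilde T_1)\gtrsim e^{-CL^2\tilde T_1}L^2\gtrsim L^2$, a contradiction for $L$ large. Without this coupling between $L$, the scale $\rho_1$, and the time $T_1$ (or some substitute for it), your energy/Moser scheme cannot rule out blow-up of the interior bound as $t\to0$, so as written the proposal does not prove the proposition.
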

\begin{proof}
We will use $C_i$ to denote constants depending only on $n$, $\Lambda$ and $\Upsilon_0$. By the proof of \cite[Lemma 4.2]{Shi1989}, it suffices to consider the case of $k_0=1$. Let $x_0\in \tilde\Omega$ and $r_0\in(0,1)$ such that $B_h(x_0,r_0)\Subset \Omega$. We claim that there is $L(n,h,\Omega,\Lambda,\Upsilon_0)>0$ such that
\begin{equation}\label{prop:local-smoothness-Ck claim}
|\tilde\nabla g(x,t)| < L\cdot \rho(x)^{-1}
\end{equation}
for all $(x,t)\in B_h(x_0,r_0)\times [0,T]$ where
\begin{equation}
\rho(x)=\inf\{ r\in (0,r_0): B_h(x,r)\Subset B_h(x_0,r_0)\}.
\end{equation}
Then the conclusion follows immediately from the above claim \eqref{prop:local-smoothness-Ck claim}.

Next, we will prove \eqref{prop:local-smoothness-Ck claim}. The large constant $L$ will be determined later. We assume $L>\Upsilon_0$. Then by $r_{0}<1$ and assumption (iii), there is $T_{1}>0$ such that \eqref{prop:local-smoothness-Ck claim} holds on $B_h(x_0,r_0)\times [0,T_{1}]$. If $T_{1}=T$, then we are done. If $T_{1}<T$, then
\begin{equation}\label{prop:local-smoothness-Ck eqn 1}
|\tilde\nabla g(x,t)| < L\cdot \rho(x)^{-1}
\end{equation}
for all $(x,t)\in B_h(x_0,r_0)\times [0,T_{1})$, while there is $x_{1}\in B_{h}(x_{0},r_{0})$ such that
\begin{equation}\label{prop:local-smoothness-Ck eqn 2}
|\tilde\nabla g(x_{1},T_{1})| = L\cdot \rho_{1}^{-1},
\end{equation}
where $\rho_{1}=\rho(x_{1})$. \eqref{prop:local-smoothness-Ck eqn 1} implies that for all $(x,t)\in B_h(x_1,\frac12\rho_1)\times [0,T_1]$,
\begin{equation}
|\tilde\nabla g(x,t)| \leq 2L\rho_{1}^{-1}.
\end{equation}
Combining assumption (i) and \eqref{prop:local-smoothness-Ck eqn 2}, we obtain $L\rho_{1}^{-1}\leq C_{0}T_{1}^{1/2}$.

To better illustrate the dependency,  we let
\begin{equation}
r=\frac18 \rho_1, \ \ \tilde T_1=64\rho_1^{-2}T_1
\end{equation}
and consider the rescaled metrics $\hat g(t)=r^{-2}g(r^2t)$ and $\hat h=r^{-2}h$ on $B_{\hat h}(x_1,4)\times [0,\tilde T_1]$ which satisfy
\begin{equation}\label{rescaled-conditions}
\left\{
\begin{array}{ll}
\Lambda^{-1} \hat h\leq \hat g(t)\leq \Lambda \hat h;\\[1mm]
|\nabla^{\hat h}\hat g(t)|\leq \frac14 L;\\[1mm]
|\nabla^{\hat h}\hat g(0)|\leq \frac18 \rho_1\Upsilon_0 ;\\[1mm]
|\nabla^{\hat h}\hat g|(x_1,\tilde T_1)=\frac18 L.\\
\end{array}
\right.
\end{equation}
We will work on $B_{\hat h}(x_1,4)\times [0,\tilde T_1]$. Recalling $L\rho_{1}^{-1}\leq C_{0}T_{1}^{-1/2}$, we obtain
\begin{equation}\label{prop:local-smoothness-Ck eqn 3}
L^2\tilde T_1 = 64L^{2}\rho_{1}^{-2}T_{1} \leq  64 C_{0}^{2}.
\end{equation}
Using \eqref{eqn: h-flow} and \eqref{rescaled-conditions}, direct calculation shows that there are $C_1,C_2,C_3>0$ such that
\begin{equation}
\left\{
\begin{array}{ll}
\left( \frac{\partial}{\partial t}-\hat g^{ij}\nabla^{\hat h}_i\nabla^{\hat h}_h\right)\left(e^{-C_1L^2 t}\tr_{\hat h}\hat g\right)
\leq -e^{-C_1L^2 t}|\nabla^{\hat h}\hat g|^2;\\[3mm]
\left( \frac{\partial}{\partial t}-\hat g^{ij}\nabla^{\hat h}_i\nabla^{\hat h}_h\right)  \left( e^{-C_2L^2t}|\nabla^{\hat h}\hat g|^2 \right)
\leq -C_3^{-1}e^{-C_2L^2t}|\nabla^{\hat h,2}\hat g|^2.
\end{array}
\right.
\end{equation}

Let $\eta$ be a cut-off function on $M$ such that $\eta\equiv 1$ on $B_{\hat h}(x_1,1)$, $\eta\equiv 0$ on $M\setminus B_{\hat h}(x_1,4)$ and $|\nabla^{\hat h}\eta|\leq10^{4}$. Consider the test function
\begin{equation}
F=e^{-\tilde CL^2t} \eta^{2} |\nabla^{\hat h}\hat g|^2+\tilde C e^{-C_1L^2 t} \tr_{\hat h}\hat g
\end{equation}
where $\tilde C$ is a constant to be determined later. \eqref{rescaled-conditions} and the Cauchy-Schwarz inequality implies
\begin{equation}
\begin{split}
& \left( \frac{\partial}{\partial t}-\hat g^{ij}\nabla^{\hat h}_i\nabla^{\hat h}_h\right)F \\[1mm]
\leq {} &  10^{4}e^{-\tilde CL^2t}\eta\big|\nabla^{\hat h}|\nabla^{\hat h}\hat g|^2\big|
-C_3^{-1}e^{-\tilde CL^2t}\eta^{2}|\nabla^{\hat h,2}\hat g|^2
-\tilde C e^{-C_1L^2t} |\nabla^{\hat h}\hat g|^2\\[1mm]
\leq {} & C_4 e^{-\tilde C L^2 t}|\nabla^{\hat h}\hat g|^2-\tilde Ce^{-C_1L^2t} |\nabla^{\hat h}\hat g|^2.
\end{split}
\end{equation}
By \eqref{prop:local-smoothness-Ck eqn 3}, we have $L^2t\leq L^{2}T_{1}\leq64C_{0}^{2}$. Then after choosing $\tilde C$ sufficiently large depending only on $n$ and $\Lambda$,
\begin{equation}
\left( \frac{\partial}{\partial t}-\hat g^{ij}\nabla^{\hat h}_i\nabla^{\hat h}_h\right)F \leq 0.
\end{equation}
Therefore, the maximum principle implies
\begin{equation}
F(x_1,\tilde T_1)
\leq \max\left\{\sup_{B_{\hat h}(x_1,4)} F(\cdot, 0), \sup_{\partial B_{\hat h}(x_1,4)\times [0,\tilde T_1]} F \right\}
\leq  \frac18 \rho_1 \Upsilon_0+n\tilde C \Lambda.
\end{equation}
On the other hand, by \eqref{rescaled-conditions} and \eqref{prop:local-smoothness-Ck eqn 3},
\begin{equation}
F(x_1,\tilde T_1)\geq \frac1{64}e^{-64 C_0^{2}\tilde C} L^2.
\end{equation}
However, the above is impossible if we initially choose $L$ to be sufficiently large. This shows that with this choice of $L$, we must have $T_{1}=T$, which proves the claim \eqref{prop:local-smoothness-Ck claim}. Then we complete the proof of case $k_0=1$. When $k_{0}>1$, the higher order estimate can be proved by using a similar argument or using Bernstein-Shi's trick as in \cite{Shi1989}.
\end{proof}

We remark here that the above estimate we obtained is far from being optimal but it suffices for our purpose.  In particular, using iteration method one should be able to improve dependency of some constants.  Next, we will establish a uniform time zero regularity of the Ricci-Deturck flow with respect to the $L^p_{\loc}$  topology under metric equivalence and small gradient concentration assumptions.

\begin{prop}\label{timezero-regularity-W1n}
For any $\Lambda>1$, there is $\delta_{3}(n,\Lambda)>0$ such that the following holds. Suppose $g(t)$ is a smooth solution to the Ricci-Deturck $h$-flow on $M\times [0,T]$ for some background metric $h$ satisfying $(\star)$ so that
\begin{enumerate}
\item[(i)] $\Lambda^{-1} h\leq g(t)\leq \Lambda h$ on $M\times [0,T]$;
\item[(ii)] $\forall x\in M$ and $t\in (0,T]$,
$$\left(\fint_{B_h(x,1)} |\tilde\nabla g(t)|^n d\mathrm{vol}_h\right)^{1/n} \leq \delta_{3};$$
\end{enumerate}
then for all $p>0$ there is $L_p(p,n,\Lambda)>0$ such that for $x\in M$ and $t,s\in [0,T]$,
\begin{equation}
\fint_{B_h(x,1)} | g(t)-g(s)|^p d\mathrm{vol}_h \leq L_p|t-s|.
\end{equation}
\end{prop}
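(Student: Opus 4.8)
The plan is to control $|g(t)-g(s)|$ by integrating the flow equation in time: writing $g(t)-g(s)=\int_s^t \partial_\tau g(\tau)\,d\tau$, it suffices to bound $\fint_{B_h(x,1)}|\partial_\tau g(\tau)|^p\,d\mathrm{vol}_h$ uniformly in $\tau\in(0,T]$ and then apply Jensen/Minkowski in the $\tau$-variable. From the evolution equation \eqref{eqn: h-flow}, the right-hand side $\partial_\tau g$ is schematically $g^{-1}*\tilde\nabla^2 g + g^{-1}g\,h^{-1}*\tilde R + g^{-1}g^{-1}*\tilde\nabla g*\tilde\nabla g$; using assumption (i) for the algebraic factors and $(\star)$ for $|\tilde R|$, this gives the pointwise bound $|\partial_\tau g|\leq C(|\tilde\nabla^2 g|+|\tilde\nabla g|^2+1)$. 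So everything reduces to an integral estimate on $|\tilde\nabla^2 g(\tau)|$ over $B_h(x,1)$, and since $|\tilde\nabla g|^4$ is controllable once we control $|\tilde\nabla^2 g|^2$ and the $W^{1,n}$-concentration (via interpolation/Sobolev), the real task is a spacetime $L^2$ bound on $\tilde\nabla^2 g$.

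To get that, I would revisit the energy inequality already derived in the proof of Lemma~\ref{borderline-C1}. There, with $Q=\sqrt{|\tilde\nabla g|^2+\sigma^2}$, one obtains after choosing the concentration threshold small enough (take $\delta_3\le\delta_1$) that
\begin{equation*}
\frac{\de}{\de t}\left(\int_M \eta^4 Q^n\,d\vol_h\right)+c(n,\Lambda)\int_M \eta^4 Q^{n-2}|\tilde\nabla Q|^2\,d\vol_h \leq C_{10}|B_h(x,2)|.
\end{equation*}
Integrating on $[0,T]$ and using that the left endpoint term is bounded by $\int_{B_h(x,2)}|\tilde\nabla g_0|^n+C$ (itself controlled by assumption (ii) at $t=0$, or simply nonnegative), this yields a \emph{spacetime} bound $\int_0^T\int_{B_h(x,3/2)}Q^{n-2}|\tilde\nabla^2 g|^2\,d\vol_h\,dt\le C(n,\Lambda)$. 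For $n>2$ this is not quite an $L^2$ bound on $\tilde\nabla^2 g$ because of the $Q^{n-2}$ weight, but $Q$ is bounded below by $\sigma$ and, more usefully, one can instead run the same integration-by-parts argument with test function $\eta^4$ (rather than $\eta^4 Q^{n-2}$) against the inequality $\partial_t|\tilde\nabla g|^2+\Lambda^{-1}|\tilde\nabla^2 g|^2\le g^{ab}\tilde\nabla_a\tilde\nabla_b|\tilde\nabla g|^2+C_1|\tilde\nabla g|^4+C_1$; the bad term $\int\eta^4|\tilde\nabla g|^4$ is absorbed using $\int_{B_h(x,2)}|\tilde\nabla g|^4\le\|\tilde\nabla g\|_{L^n(B_h(x,2))}^{2}\cdot\big(\text{Sobolev applied to }|\tilde\nabla g|^2\big)$, i.e. the small-concentration hypothesis (ii) again buys a small constant in front of $\int\eta^2|\tilde\nabla^2 g|^2$. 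The outcome is the clean bound
\begin{equation*}
\int_0^T\int_{B_h(x,1)}|\tilde\nabla^2 g(\tau)|^2\,d\vol_h\,d\tau\le C(n,\Lambda)\,(1+T).
\end{equation*}

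With this in hand, for $1\le p\le 2$ I bound $\fint_{B_h(x,1)}|g(t)-g(s)|^p$ by $\big(\fint|g(t)-g(s)|^2\big)^{p/2}$ (Jensen, fixed-measure ball), and estimate $\fint_{B_h(x,1)}|g(t)-g(s)|^2\le |t-s|\int_s^t\fint_{B_h(x,1)}|\partial_\tau g|^2\,d\tau\le C|t-s|\cdot\big(1+\int_0^T\fint|\tilde\nabla^2 g|^2+|\tilde\nabla g|^4\big)\le C|t-s|$ by Cauchy–Schwarz in $\tau$ and the spacetime bound (the $|\tilde\nabla g|^4$ term is handled exactly as above). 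For $p>2$ one cannot use the $L^2$ spacetime bound directly, so instead I interpolate: by assumption (i), $|g(t)-g(s)|\le 2n\Lambda$ pointwise, hence $|g(t)-g(s)|^p\le (2n\Lambda)^{p-2}|g(t)-g(s)|^2$, which reduces the case $p>2$ to the case $p=2$ already done, with $L_p=(2n\Lambda)^{p-2}L_2$; and for $0<p<1$, Jensen again gives $\fint|g(t)-g(s)|^p\le(\fint|g(t)-g(s)|)^p\le(C|t-s|^{1/2})^{p}$ — wait, this gives $|t-s|^{p/2}$ not $|t-s|$, so for $0<p<1$ I instead bound $|g(t)-g(s)|^p\le (2n\Lambda)^{p-1}|g(t)-g(s)|$ pointwise and reduce to $p=1$. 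The main obstacle is the absorption step: making precise, using only hypothesis (ii) with a quantitatively small $\delta_3$, that the quartic gradient term in the Bochner-type inequality can be swallowed by the good $\int\eta^2|\tilde\nabla^2 g|^2$ term via the local Sobolev inequality \eqref{Sobo-ineq} — this is where the choice of $\delta_3(n,\Lambda)$ is forced, and one must be careful that the Sobolev constant is dimensional (as noted after $(\star)$) so no hidden dependence creeps in. Everything else is Jensen, Cauchy–Schwarz in time, and the pointwise bi-Lipschitz bound.
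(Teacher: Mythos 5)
Your route is genuinely different from the paper's. The paper never touches second derivatives: it sets $\alpha=g(t)-g(s)$, tests the evolution equation for $\alpha$ against $\eta^2\alpha$, integrates by parts once, and splits $g=\alpha+g(s)$ inside the divergence term, so that only $|\tilde\nabla\alpha|$, $|\tilde\nabla g(t)|$, $|\tilde\nabla g(s)|$ appear; these are controlled in $L^2$ of a unit ball directly from hypothesis (ii) by H\"older (no Sobolev absorption, and the smallness of $\delta_3$ plays no essential role), giving $\frac{\de}{\de t}\int\eta^2|\alpha|^2\le C(n,\Lambda)|B_h(x,1)|$ and hence the $p=2$ bound, linear in $|t-s|$ with constant independent of $T$. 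Your plan instead runs through a spacetime $L^2$ bound on $\tilde\nabla^2 g$ (and $L^4$ on $\tilde\nabla g$) plus Cauchy--Schwarz in time; this can be made to work (it is the Lemma~\ref{borderline-C1} energy argument with exponent $2$ in place of $n$), but it is heavier and two details need care: in the absorption step the Sobolev inequality \eqref{Sobo-ineq} must be applied to $\eta^2|\tilde\nabla g|$ (via Kato, so the Dirichlet energy is bounded by $\int\eta^4|\tilde\nabla^2 g|^2$ plus lower order), not to $|\tilde\nabla g|^2$, which would produce $\int\eta^4|\tilde\nabla g|^2|\tilde\nabla^2 g|^2$ and cannot be absorbed; and your constant picks up a factor $1+T$ (or $1+|t-s|$), which you should remove by observing that the claim is trivial for $|t-s|\ge 1$ since $|\alpha|\le C(n)\Lambda$ pointwise.

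The genuine gap is in the small-$p$ cases. For $1\le p<2$ your Jensen step only gives $\big(\fint|\alpha|^2\big)^{p/2}\le (C|t-s|)^{p/2}$, which for $|t-s|<1$ is weaker than the asserted linear rate, and your fix for $0<p<1$ rests on the pointwise inequality $|\alpha|^p\le(2n\Lambda)^{p-1}|\alpha|$, which is false: since $p-1<0$ and $|\alpha|\le C(n)\Lambda$, one has $|\alpha|^{p-1}\ge (C(n)\Lambda)^{p-1}$, so the inequality goes the other way (test it with $|\alpha|$ small). A bound of the form $|\alpha|^p\le C|\alpha|^q$ with $C=C(n,\Lambda)$ is available only for $p\ge q$, which is exactly how the paper (and you) reduce $p>2$ to $p=2$, but it cannot push $p<2$ upward. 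To be fair, the paper's own one-line ``H\"older'' treatment of $0<p<2$ yields the same $|t-s|^{p/2}$ rate, and that weaker rate $|t-s|^{\min\{1,p/2\}}$ is all that the applications use; but as written your patch for $0<p<1$ should be deleted and replaced, either by stating the rate $|t-s|^{\min\{1,p/2\}}$, or by running the difference estimate at the level of $\int\eta^2(|\alpha|^2+\sigma)^{p/2}$ directly.
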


\begin{proof}
We use $C_i$ to denote constants depending only on $n$ and $\Lambda$. For $x\in M$. Let $\eta$ be a smooth cut-off function on $M$ such that $\eta\equiv 1$ on $B_h(x,1)$, $\eta\equiv 0$ on $M\setminus B_h(x,2)$ and $|\tinabla\eta|\leq10^{4}$. We assume without loss of generality that $t>s$ and define $\alpha=g(t)-g(s)$. Then $\de_{t}\alpha_{ij}=\de_{t}g_{ij}$ and assumption (i) implies $|\alpha|\leq2\Lambda$. Using \eqref{eqn: h-flow} and integrating by parts,
\begin{equation}\label{timezero-regularity-W1n eqn 1}
\begin{split}
& \frac{\partial}{\partial t}\int_M \eta^{2} |\a|^2 \; d\mathrm{vol}_h \\
= {} & 2\int_M \eta^{2} \, \langle\a, g^{pq}\tilde\nabla_p \tilde\nabla_q g+ \widetilde{\mathrm{Rm}}*g^{-1}*g
+ g^{-1}*g^{-1}*\tilde\nabla g*\tilde \nabla g \rangle  \; d\mathrm{vol}_h \\
\leq {} & 2\int_M \eta^{2} h^{ik}h^{jl} \a_{kl}g^{pq}\tilde\nabla_p \tilde\nabla_q g_{ij}d\vol_{h}
+C_{1}\int_{M}\eta^{2}|\alpha|(1+|\tinabla g|^{2})d\vol_{h} \\
= {} & -2\int_M h^{ik}h^{jl}\tinabla_{p}(\eta^{2}\a_{kl}g^{pq})\tilde\nabla_q g_{ij}d\vol_{h}
+2C_{1}\Lambda\int_{M}\eta^{2}(1+|\tinabla g|^{2})d\vol_{h}.
\end{split}
\end{equation}
For the first term on the right hand side, using $g_{ij}=\alpha_{ij}+g_{ij}(s)$, the Cauchy-Schwarz inequality and $|\alpha|\leq 2\Lambda$,
\begin{equation}
\begin{split}
& -2\int_M h^{ik}h^{jl}\tinabla_{p}(\eta^{2}\a_{kl}g^{pq})\tilde\nabla_q g_{ij}d\vol_{h} \\
= {} & -2\int_M h^{ik}h^{jl}\tinabla_{p}(\eta^{2}\a_{kl}g^{pq})\tilde\nabla_q\alpha_{ij}d\vol_{h}
-2\int_M h^{ik}h^{jl}\tinabla_{p}(\eta^{2}\a_{kl}g^{pq})\tilde\nabla_q g_{ij}(s)d\vol_{h} \\
\leq {} & -2\Lambda^{-1}\int_{M}\eta^{2}|\tinabla\alpha|^{2}d\vol_{h}
+C_{2}\int_{M}(\eta|\alpha||\tinabla\alpha|+\eta^{2}|\alpha||\tinabla g||\tinabla\alpha|)d\vol_{h} \\
& +C_{2}\int_M (\eta|\alpha|+\eta^{2}|\tinabla\alpha|+\eta^{2}|\alpha||\tinabla g|)|\tinabla g(s)|d\vol_{h} \\
\leq {} & C_{2}\int_{B_{h}(x,2)}(|\alpha|^{2}+|\alpha|^{2}|\tinabla g|^{2}
+|\alpha||\tinabla g(s)|+|\tinabla g(s)|^{2}+ |\alpha||\tinabla g||\tinabla g(s)|)d\vol_{h} \\
\leq {} & C_{2}\Lambda^{2}\int_{B_{h}(x,2)}(1+|\tinabla g|^{2}+|\tinabla g(s)|^{2})d\vol_{h}.
\end{split}
\end{equation}
Substituting this into \eqref{timezero-regularity-W1n eqn 1},
\begin{equation}
\frac{\partial}{\partial t}\int_M \eta^{2}|\a|^2 \; d\mathrm{vol}_h
\leq C_{3}\int_{B_{h}(x,2)}(1+|\tinabla g|^{2}+|\tinabla g(s)|^{2})d\vol_{h}.
\end{equation}
Using assumption (ii), the volume comparison theorem and a covering argument, we obtain
\begin{equation}
\begin{split}
\int_{B_{h}(x,2)}(1+|\tinabla g|^{2}+|\tinabla g(s)|^{2})d\vol_{h}
\leq C_{4}(1+\delta_{3}^{2})|B_{h}(x,2)| \leq C_{5}|B_{h}(x,1)|.
\end{split}
\end{equation}
It then follows that
\begin{equation}
\frac{\partial}{\partial t}\int_M \eta^{2}|\a|^2 \; d\mathrm{vol}_h
\leq C_{6}|B_{h}(x,1)|.
\end{equation}
Integrating on $[s,t]$ and recalling $\alpha=g(t)-g(s)$, we obtain
\begin{equation}
\fint_{B_{h}(x,1)} |g(t)-g(s)|^{2} \; d\mathrm{vol}_h
\leq \frac{1}{|B_{h}(x,1)|}\int_M \eta^{2}|\a|^2 \; d\mathrm{vol}_h
\leq C_{6}(t-s).
\end{equation}
This completes the proof of the case $p=2$. When $0<p<2$, the conclusion follows from the H\"older inequality. When $p>2$, using assumption (i), we obtain
\begin{equation}
\fint_{B_{h}(x,1)} |g(t)-g(s)|^{p} \; d\mathrm{vol}_h
\leq \Lambda^{p-2} \fint_{B_{h}(x,1)} |g(t)-g(s)|^{2} \; d\mathrm{vol}_h
\leq C_{6}\Lambda^{p-2}(t-s).
\end{equation}
\end{proof}

We end this section by showing a $W^{1,n}_{\loc}$ stability along flow with small gradient concentration.
\begin{prop}\label{W1n stability}
For any $\Lambda>1$, there is $\delta_{4}(n,\Lambda)>0$ such that the following holds.
Suppose $g(t)$ is a smooth solution to the Ricci-Deturck $h$-flow on $M\times [0,T]$ for some background metric $h$ satisfying $(\star)$ so that
\begin{enumerate}
\item[(i)] $\Lambda^{-1} h\leq g(t)\leq \Lambda h$ on $M\times [0,T]$;
\item[(ii)] $\forall x\in M$ and $t\in (0,T]$,
\begin{equation*}
\left(\fint_{B_h(x,1)} |\tilde\nabla g(t)|^n d\mathrm{vol}_h\right)^{1/n} \leq \delta_{4}.
\end{equation*}
\end{enumerate}
Then for any smooth metric $\bar{g}$ on $M$, there is $\ov{L}(n,\Lambda,\bar{g})>0$ such that for all $x\in M$ and $t,s\in [0,T]$,
\begin{equation}
\int_{B_h(x,1)} |\tilde\nabla(g(t)-\bar{g})|^n d\mathrm{vol}_h \leq \int_{B_h(x,2)} |\tilde\nabla(g(0)-\bar{g})|^n d\mathrm{vol}_h+\ov{L}|B_h(x,2)| \cdot t.
\end{equation}
\end{prop}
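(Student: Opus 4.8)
The plan is to adapt the proof of Lemma~\ref{borderline-C1}, which is essentially the $\bar g\equiv 0$ case of the present statement. I would set $\gamma=g(t)-\bar g$; since $\bar g$ is independent of $t$ one has $\de_t\gamma=\de_t g$ and $\tinabla g=\tinabla\gamma+\tinabla\bar g$, and substituting $g=\gamma+\bar g$ into the principal term of \eqref{eqn: h-flow} shows that $\gamma$ solves the strictly parabolic equation
\begin{equation*}
\de_t\gamma=g^{pq}\tinabla_p\tinabla_q\gamma+g^{pq}\tinabla_p\tinabla_q\bar g+\widetilde{\mathrm{Rm}}*g^{-1}*g+g^{-1}*g^{-1}*\tinabla g*\tinabla g .
\end{equation*}
The idea is then to run the energy argument of Lemma~\ref{borderline-C1} with $Q=\sqrt{|\tinabla\gamma|^{2}+\sigma^{2}}$ in place of $\sqrt{|\tinabla g|^{2}+\sigma^{2}}$. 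I let $C_i$ denote constants depending only on $n,\Lambda$, and $C_{\bar g}$ a constant depending in addition on $\sup_M(1+|\tinabla\bar g|+|\tinabla^{2}\bar g|+|\tinabla^{3}\bar g|)$, which is finite when, for instance, $M$ is compact; the final constant $\ov{L}$ will be a dimensional multiple of $C_{\bar g}$.

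First I would derive the differential inequality: differentiating the equation for $\gamma$ once, commuting covariant derivatives (which contributes $\widetilde{\mathrm{Rm}}*\tinabla\gamma$ terms controlled by $(\star)$), and pairing with $\tinabla\gamma$ in the \emph{fixed} metric $h$, the goal is
\begin{equation*}
\de_t|\tinabla\gamma|^{2}+\Lambda^{-1}|\tinabla^{2}\gamma|^{2}\leq g^{ab}\tinabla_a\tinabla_b|\tinabla\gamma|^{2}+C_1|\tinabla g|^{2}|\tinabla\gamma|^{2}+C_{\bar g}\big(1+|\tinabla\gamma|^{2}\big).
\end{equation*}
The one place where care is needed is the \emph{dimensional} constant $C_1$ in front of the top-order reaction term. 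Every reaction term produced by differentiating $g^{-1}$ or the quadratic gradient term of \eqref{eqn: h-flow} genuinely carries at least one factor $\tinabla g$; using $|\tinabla g|\le|\tinabla\gamma|+|\tinabla\bar g|$, $|\tinabla\gamma|^{2}\le 2|\tinabla g|^{2}+2|\tinabla\bar g|^{2}$ and Young's inequality, each such term is absorbed into $\varepsilon|\tinabla^{2}\gamma|^{2}$ plus $C_1|\tinabla g|^{2}|\tinabla\gamma|^{2}$ plus a $C_{\bar g}$-multiple of $1+|\tinabla\gamma|^{2}$ — in particular no term of the form $C_{\bar g}|\tinabla\gamma|^{4}$ appears. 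This is the main obstacle of the proof: if one left the quartic reaction term as $|\tinabla\gamma|^{4}$ it could not be absorbed at the energy stage, since $\int_{B_h(x,2)}|\tinabla\gamma|^{n}$ is only bounded — it contains $\int|\tinabla\bar g|^{n}$ — and not small, so the threshold $\delta_4$ would be forced to depend on $\bar g$. One must exploit the algebraic structure of \eqref{eqn: h-flow} to keep the top-order term in the form $|\tinabla g|^{2}|\tinabla\gamma|^{2}$, whose concentration is small by hypothesis (ii).

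Granting this, I would mimic Lemma~\ref{borderline-C1} verbatim: pass to $Q$ (so $|\tinabla Q|\le|\tinabla^{2}\gamma|$), multiply by $\eta^{4}Q^{n-2}$ for the standard cut-off $\eta$ with $\eta\equiv1$ on $B_h(x,1)$, $\supp\eta\subset B_h(x,2)$, $|\tinabla\eta|\le 10^{4}$, integrate over $M$, integrate by parts in the Laplacian term (dispatching the resulting $\int\eta^{4}Q^{n+2}$ through $Q^{n+2}\le 2Q^{n}|\tinabla g|^{2}+C_{\bar g}Q^{n}$), and apply the Sobolev inequality \eqref{Sobo-ineq} to $\eta^{2}Q^{n/2}$. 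This produces on the right-hand side a term $C_2\int_M\eta^{4}Q^{n}|\tinabla g|^{2}\,d\vol_h$ together with terms bounded directly by $C_{\bar g}|B_h(x,2)|$, using hypothesis (ii), volume comparison and a covering of $B_h(x,2)$ by unit balls (which give $\int_{B_h(x,2)}|\tinabla g|^{n}\le C_n\delta_4^{n}|B_h(x,2)|$ and $\int_{B_h(x,2)}Q^{n}\le C_{\bar g}|B_h(x,2)|$). Estimating $\int\eta^{4}Q^{n}|\tinabla g|^{2}$ by H\"older against $\big(\int_{B_h(x,2)}|\tinabla g|^{n}\big)^{2/n}\le C_n\delta_4^{2}|B_h(x,2)|^{2/n}$ and choosing $\delta_4=\delta_4(n,\Lambda)$ small enough to absorb it into the Sobolev term, I obtain $\frac{d}{dt}\int_M\eta^{4}Q^{n}\,d\vol_h\le C_{\bar g}|B_h(x,2)|$. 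Integrating in $t$ over $[0,t]$, letting $\sigma\to0$, and using $0\le\eta\le1$ with $\eta\equiv1$ on $B_h(x,1)$ then yields the asserted bound with $\ov{L}=C_{\bar g}$; the same computation with $0$ replaced by an arbitrary $s\le t$ gives the corresponding estimate with $t$ and $g(0)$ replaced by $|t-s|$ and $g(s)$.
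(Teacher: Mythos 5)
Your proposal follows essentially the same route as the paper: set $\beta=g(t)-\bar g$, derive the parabolic inequality for $|\tinabla\beta|^{2}$ while using $g=\beta+\bar g$ to keep the top-order reaction term in the form $|\tinabla g|^{2}|\tinabla\beta|^{2}$ (plus $\bar g$-dependent lower-order terms), and then run the cut-off/Sobolev energy argument of Lemma~\ref{borderline-C1} with $H=\sqrt{|\tinabla\beta|^{2}+\sigma^{2}}$, absorbing the $|\tinabla g|^{2}$-weighted term via hypothesis (ii) with $\delta_4=\delta_4(n,\Lambda)$. This matches the paper's proof, including the key observation that the threshold must not depend on $\bar g$.
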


\begin{proof}
We will use $C_{i}$ to denote constants depending only on $n$ and $\Lambda$, while $\ov{C}_{i}$ to denote constants depending only on $n$, $\Lambda$ and $\bar{g}$. Define
\begin{equation}
\beta_{ij} = g_{ij}(t) - \bar{g}_{ij}.
\end{equation}
Then along the Ricci-Deturck $h$-flow, we have
\begin{equation}
\frac{\de}{\de t}\beta_{ij} = g^{ab}\tinabla_{a}\tinabla_{b}\beta_{ij}+g^{-1}*\tinabla^{2}\bar{g}+\widetilde{\mathrm{Rm}}*g^{-1}*g+g^{-1}*g^{-1}*\tinabla g*\tinabla g
\end{equation}
and so
\begin{equation}
\begin{split}
& \frac{\de}{\de t}|\tinabla\beta|^{2}+2\Lambda^{-1}|\tinabla^{2}\beta|^{2} \\
\leq {} & g^{ab}\tinabla_{a}\tinabla_{b}|\tinabla\beta|^{2}+C_{1}|\tinabla\beta||\tinabla g||\tinabla^{2}\beta|+C_{1}|\tinabla\beta|^{2} \\
&+\ov{C}_{1}(|\tinabla g|+1)|\tinabla\beta|
+C_{1}|\tinabla\beta|\left(|\tinabla g||\tinabla^{2}g|+|\tinabla g|^{3} \right).
\end{split}
\end{equation}
Using $g=\beta+\bar{g}$ and the Cauchy-Schwarz inequality,
\begin{equation}
\begin{split}
& \frac{\de}{\de t}|\tinabla\beta|^{2}+2\Lambda^{-1}|\tinabla^{2}\beta|^{2} \\
\leq {} & g^{ab}\tinabla_{a}\tinabla_{b}|\tinabla\beta|^{2}+C_{2}|\tinabla\beta||\tinabla g||\tinabla^{2}\beta|
+C_{2}|\tinabla g|^{2}|\tinabla\beta|^{2}+C_{2}|\tinabla\beta|^{2}+\ov{C}_{2} \\
&+C_{2}|\tinabla\beta|\left(|\tinabla g||\tinabla^{2}\beta|+|\tinabla g||\tinabla^{2}\bar g|+|\tinabla g|^{2}|\tinabla\beta|
+|\tinabla g|^{2}|\tinabla\bar g| \right) \\
\leq {} & g^{ab}\tinabla_{a}\tinabla_{b}|\tinabla\beta|^{2}+C_{3}|\tinabla\beta||\tinabla g||\tinabla^{2}\beta|
+C_{3}|\tinabla g|^{2}|\tinabla\beta|^{2}+\ov{C}_{3}|\tinabla\beta|^{2}+\ov{C}_{3} \\
\leq {} & g^{ab}\tinabla_{a}\tinabla_{b}|\tinabla\beta|^{2}+\Lambda^{-1}|\tinabla^{2}\beta|^{2}
+C_{3}|\tinabla g|^{2}|\tinabla\beta|^{2}+\ov{C}_{3}|\tinabla\beta|^{2}+\ov{C}_{3}.
\end{split}
\end{equation}
For $\sigma\in(0,\delta_{4})$, we define the smooth function $H$ by
\begin{equation}
H = \sqrt{|\tinabla\beta|^{2}+\sigma^{2}}
\end{equation}
Then we obtain
\begin{equation}
\frac{\de}{\de t}H^{2}+\Lambda^{-1}|\tinabla\beta|^{2} \leq g^{ab}\tinabla_{a}\tinabla_{b}H^{2}+C_{4}|\tinabla g|^{2}H^{2}+\ov{C}_{4}H^{2}+\ov{C}_{4}.
\end{equation}
Let $\eta$ be a cut-off function such that $\eta\equiv1$ on $B_{h}(x,1)$, $\eta\equiv0$ on $M\setminus B_{h}(x,2)$ and $|\tilde\nabla \eta|\leq 10^4$ on $M$. By the similar calculation of \eqref{energy-ineq},
\begin{equation}\label{W 1 n estimate eqn 5}
\begin{split}
& \frac{2}{n}\cdot\frac{\de}{\de t}\left(\int_{M}\eta^{4}H^{n}d\vol_{h}\right)
+\frac{|B_{h}(x,2)|^{2/n}}{C_{5}\Lambda}\left(\int_{M}(\eta^{4}H^{n})^{\frac{n}{n-2}}d\vol_{h}\right)^{\frac{n-2}{n}} \\[1.5mm]
\leq {} & C_{5}\int_{M}\eta^{4}|\tinabla g|^{2}H^{n}d\vol_{h}
+\ov{C}_{5}\int_{M}(\eta^{2}H^{n}+\eta^{4}H^{n}+\eta^{4})d\vol_{h}. \\
\leq {} & C_{5}\left(\int_{B_{h}(x,2)}|\tinabla g|^{n}d\vol_{h}\right)^{\frac{2}{n}}\left(\int_{M}(\eta^{4}H^{n})^{\frac{n}{n-2}}d\vol_{h}\right)^{\frac{n-2}{n}}
+\ov{C}_{5}\int_{B_{h}(x,2)}(H^{n}+1)d\vol_{h}.
\end{split}
\end{equation}
Using assumption (ii), we obtain
\begin{equation}\label{W 1 n estimate eqn 6}
\left(\int_{B_{h}(x,2)}|\tinabla g|^{n}d\vol_{h}\right)^{\frac{2}{n}} \leq C_{n}\delta_{4}^{2}|B_{h}(x,2)|^{2/n}
\end{equation}
and
\begin{equation}\label{W 1 n estimate eqn 7}
\int_{B_{h}(x,2)}(H^{n}+1)d\vol_{h} \leq \ov{C}_{6}|B_{h}(x,2)|.
\end{equation}
Choosing $\delta_{4}$ sufficiently small depending only on $n$ and $\Lambda$, then \eqref{W 1 n estimate eqn 5}, \eqref{W 1 n estimate eqn 6} and \eqref{W 1 n estimate eqn 7} imply
\begin{equation}
\frac{\de}{\de t}\left(\int_{M}\eta^{4}H^{n}d\vol_{h}\right)
\leq \ov{C}_{7}|B_{h}(x,2)|.
\end{equation}
After integrating both sides on $[0,t]$ and letting $\sigma\to0$, we obtain
\begin{equation}
\int_{B_h(x,1)} |\tilde\nabla(g(t)-\bar{g})|^n d\mathrm{vol}_h \leq \int_{B_h(x,2)} |\tilde\nabla(g(0)-\bar{g})|^n d\mathrm{vol}_h+\ov{C}_{8}|B_h(x,2)| \cdot t.
\end{equation}
\end{proof}

\section{Existence of Ricci-Deturck flow with rough initial data}\label{Sec:Existence}

In this section, we will establish the existence of the Ricci-Deturck $h$-flow starting from $g_0$ which is bi-Lipschitz on $M$ and is $W^{1,n}$. We start with the proof of Theorem~\ref{thm-existence} which is a quantitative version of the existence theory.

\begin{proof}[Proof of Theorem~\ref{thm-existence}]
We assume $(M,h)$ to be complete non-compact. The compact case can be proved using a similar but simpler argument.

By Proposition~\ref{prop:RF-smoothing}, we may assume that the background metric $h$ satisfies $(\star)$ by a slight perturbation. By the result of \cite{Tam2010} (see also \cite{GreenWu1979}) and $|\Rm(h)|\leq 1$, there is $\rho\in C_{\loc}^\infty(M)$ such that $|\tilde\nabla\rho|^2+|\tilde\nabla^2 \rho|\leq 1$ and
\begin{equation}
C_n^{-1}(d_h(\cdot,p)+1)\leq \rho(\cdot) \leq C_n(d_h(\cdot,p)+1)
\end{equation}
for some dimensional constant $C_n$ and fixed point $p\in M$.

Let $\phi$ be a smooth function on $[0,+\infty)$ such that $\phi\equiv 1$ on $[0,1]$, $\phi\equiv 0$ on $[2,+\infty)$ and $0\leq- \phi'\leq 10$.  Consider the following sequence of metrics:
\begin{equation}
h_{i}=\phi\left(i^{-1}\rho\right)g_0 +\left(1-\phi(i^{-1}\rho )\right)h
\end{equation}
so that $h_i\equiv h$ at infinity and coincides with $g_0$ on large compact set. Moreover, $h_i\to g_0$ in $W^{1,n}_{\loc}$ as $i\to +\infty$.  Since $h_i=h$ outside a large compact set, we may mollify $h_i$ slightly and obtain a smooth metric $g_{i,0}$ so that $g_{i,0}=h$ outside compact set and $g_{i,0}\to g_0$ in $W^{1,n}_{\loc}$ as $i\to+\infty$.  More precisely,  we may assume that for all $i\in\mathbb{N}$ sufficiently large,
\begin{equation}
\left\{
\begin{array}{ll}
\Lambda_0^{-1} h\leq g_{i,0}\leq \Lambda_0 h;\\
\displaystyle \left(\fint_{B_h(x,1)} |\tilde \nabla g_{i,0}|^n \; d\mathrm{vol}_h \right)^{1/n}\leq \e
\end{array}
\right.
\end{equation}
for all $x\in M$ and for some $\e<\e_0$. We will specify the choice of $\e_0$ later.

\medskip

By \cite[Theorem A.1]{LammSimon2021} which is a slight modification of Shi's classical existence theory \cite{Shi1989}, there is a short-time solution to the Ricci-Deturck $h$-flow $g_i(t)$ on $M\times [0,S_i]$ for some $S_i>0$ such that $g_i(0)=g_{i,0}$ and $\sup_M |\tilde \nabla^m g_i(t)|<+\infty$ for all $m\in \mathbb{N}$ on $M\times [0,S_i]$.  Here we remark that the injectivity radius lower bound assumption in  \cite[Theorem A.1]{LammSimon2021} is unnecessary as $h$ has bounded geometry. Alternatively, we can apply the existence result in \cite{Shi1989} and obtain a Ricci flow from $g_{i,0}$ and pull-back to a Ricci-Deturck $h$-flow as all metrics considered are smooth with bounded curvature.

We may assume $S_i$ is the maximal existence time of the flow $g_{i}(t)$. Let $T_i\in (0,S_{i})$ be the maximal time such that for all $t\in [0,T_i)$,
\begin{enumerate}
\item[(i')] $(2\Lambda)^{-1} h< g_i(t)< 2\Lambda h$ on $M\times [0,T_i)$;
\item[(ii')] $\forall x\in M$ and $t\in (0,T_i]$,
$$\left(\fint_{B_h(x,1)} |\tilde\nabla g_i(t)|^n d\mathrm{vol}_h\right)^{1/n} < \delta;$$
\item[(iii')] for all $m\in \mathbb{N}$, $\sup_M |\tilde \nabla^m g_i(t)|<+\infty$ on $[0,T_i)$
\end{enumerate}
where $\Lambda=\Upsilon_n\Lambda_0$ is obtained in Lemma~\ref{borderline-trace} and $\delta(n,\Lambda_0)=\min\{ \delta_1(n,\Lambda),\delta_2(n,\Lambda_0)\}$ is obtained in Lemma~\ref{borderline-C1} and  Lemma~\ref{borderline-trace} using this choice of $\Lambda$.

We will show that $S_i$ is bounded from below uniformly depending only on $n$ and $\Lambda_{0}$. It suffices to estimate the lower bound of $T_i$. By Lemma~\ref{lma:localEstimate-fromC^1} and Lemma~\ref{lma:localEstimate-fromC^1conc}, (iii') holds up to $t=T_i$. Therefore the maximality of $T_{i}$ implies that either (i') or (ii') fails at $t=T_i$. Using Lemma~\ref{borderline-C1}, a covering argument and the volume comparison theorem, we deduce that
\begin{equation}
\begin{split}
\int_{B_h(x,1)} |\tilde\nabla g_i(t)|^n d\mathrm{vol}_h &\leq \int_{B_h(x,2)} |\tilde\nabla g_{i,0}|^n d\mathrm{vol}_h+L_1|B_h(x,2)| \cdot t\\[1mm]
&\leq  (C_n\e^n+C_{n}L_1t)\cdot |B_h(x,1)|.
\end{split}
\end{equation}
Therefore,  by Lemma~\ref{borderline-trace}, if $\e_0(n,\Lambda_0)$ is sufficiently small, then $T_i> S(n,\Lambda_0)$.

Restricting $g_i(t)$ on $M\times [0,S]$, we may apply Lemma~\ref{lma:localEstimate-fromC^1} and Lemma~\ref{lma:localEstimate-fromC^1conc} to show that  $g_i(t)$ is uniformly $C^k_{\loc}$ bounded for any $[a,b]\subset (0,S]$ and uniformly bi-Lipschitz up on $[0,S]$. By diagonal subsequent argument and Arzel\`a-Ascoli theorem,  we obtain $g(t)=\lim_{i\to +\infty}g_i(t)$ on $M\times (0,S]$ with
\begin{enumerate}
\item[(a)] $(2\Lambda)^{-1}h\leq g(t)\leq 2\Lambda h$ on $M\times (0,S]$;
\item[(b)] for all $k\in \mathbb{N}$, there is $C(k,n,\Lambda_0)>0$ such that on $(0,S]$;
$$\sup_M |\tilde\nabla^k g(t)|\leq C(n,m,\Lambda_0) t^{-k/2}$$
\item[(c)] for all $x\in M$ and $t\in (0,S]$,
\begin{equation*}
\left(\fint_{B_h(x,1)} |\tilde\nabla g(t)|^n d\mathrm{vol}_h\right)^{1/n} \leq  (C_n\e^n+C_{n}L_1t)^{1/n}.
\end{equation*}
\end{enumerate}
The conclusion (d) follows from Lemma~\ref{lma:localEstimate-fromC^1conc} while the conclusion (e) follows from Proposition~\ref{prop:local-smoothness-Ck}.
\medskip

Now it remains to prove (f).  By a covering argument, it is sufficient to work on $B_h(x,1)$ for fixed $x\in M$. Proposition \ref{timezero-regularity-W1n} shows
\begin{equation}\label{Ln convergence}
\left(\fint_{B_h(x,1)} |g_{i}(t)-g_{i,0}|^n d\mathrm{vol}_h\right)^{1/n} \leq L_{n}t.
\end{equation}
For any $\ve>0$, there is a smooth metric $\bar g$ on $M$ such that
\begin{equation}\label{W1n convergence eqn 1}
\left(\fint_{B_h(x,1)} |\tilde\nabla(g_0-\bar g)|^n d\mathrm{vol}_h\right)^{1/n} \leq \ve.
\end{equation}
Recall that $g_{i,0}\to g_{0}$ in $W^{1,n}_{\loc}$. Then for sufficiently large $i$,
\begin{equation}
\left(\fint_{B_h(x,1)} |\tilde\nabla(g_{i,0}-\bar g)|^n d\mathrm{vol}_h\right)^{1/n} \leq 2\ve.
\end{equation}
By Proposition \ref{W1n stability} and a covering argument, there are $\ov{L}(n,\Lambda,\bar{g})$ and $C_{1}(n,\Lambda)$ such that
\begin{equation}
\left(\fint_{B_h(x,1)} |\tinabla(g_{i}(t)-\bar g)|^n d\mathrm{vol}_h\right)^{1/n} \leq C_{1}\ve+\ov{L}t.
\end{equation}
Combining this with \eqref{W1n convergence eqn 1},
\begin{equation}\label{W1n convergence}
\left(\fint_{B_h(x,1)} |\tinabla(g_{i}(t)-g_{0})|^n d\mathrm{vol}_h\right)^{1/n} \leq (C_{1}+1)\ve+\ov{L}t.
\end{equation}
In \eqref{Ln convergence} and \eqref{W1n convergence}, by letting $i\to\infty$, $t\to0$ and followed by $\ve\to0$, we obtain $g(t)\to g_{0}$ in $W^{1,n}_{\loc}$.
\end{proof}

Next, we will show that the solution of the Ricci-Deturck $h$-flow constructed in Theorem~\ref{thm-existence} is unique in the corresponding class. This will be crucial to study the persistence of scalar curvature under $W^{1,n}_{\loc}$ convergence.
\begin{thm}\label{thm-unique}
Suppose $h$ is a background metric satisfying $|\Rm(h)|\leq 1$. Then for any $\Lambda_0>1$, there is $\e_0(n,\Lambda_{0})>0$ such that the following holds.  If $g_0$ is a $L^\infty\cap W^{1,n}_{\loc}$ Riemannian metric  on $M$ such that
\begin{enumerate}
\item[(i)] $\Lambda_0^{-1}h\leq g_0\leq \Lambda_0 h$ on $M$;
\item[(ii)] for all $x\in M$,
$$\left(\fint_{B_h(x,1)} |\tilde \nabla g_0|^n \; d\mathrm{vol}_h \right)^{1/n}\leq \e_0$$
\end{enumerate}
then the solution to the Ricci-Deturck $h$-flow on $M\times (0,T]$ is unique within the class of solution $g(t)$ satisfying
\begin{enumerate}
\item[(a)] $(\Upsilon_n\Lambda_{0})^{-1} h\leq g(t)\leq \Upsilon_n\Lambda_{0} h$ on $M\times (0,T]$;
\item[(b)]  For all $x\in M$ and $t\in (0,T]$,
\[
\left(\fint_{B_h(x,1)} |\tilde \nabla g(t)|^n \; d\mathrm{vol}_h \right)^{1/n}\leq C(n,\Lambda_0)  \e_0;
\]
\item[(c)]  $g(t)\to g_0$ in $L^2_{\loc}$ as $t\to 0$,
\end{enumerate}
where $\Upsilon_n$ is the constant obtained in Lemma \ref{borderline-trace}.
\end{thm}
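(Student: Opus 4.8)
The plan is to prove uniqueness by a weighted energy estimate for the difference of two solutions, the crucial inputs being the uniform bi‑Lipschitz bound (a) (which keeps the difference bounded), the small gradient concentration (b) (which forces the a priori singular terms in the evolution to carry a genuinely small coefficient), and the $L^2_{\loc}$ attainment of the initial datum (c); throughout one uses $|\Rm(h)|\le1$ to invoke the local Sobolev inequality \eqref{Sobo-ineq} and the volume comparison theorem at unit scale. Let $g_1(t),g_2(t)$ be two solutions on $M\times(0,T]$ in the stated class and set $w=g_1-g_2$; hypothesis (a) gives $|w|\le C_0(n,\Lambda_0)$ on $M\times(0,T]$. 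Subtracting the two copies of \eqref{eqn: h-flow} and using $g_1^{-1}-g_2^{-1}=-g_1^{-1}*g_2^{-1}*w$, one finds that $w$ solves a linear parabolic system of the schematic form
\begin{equation*}
\begin{split}
\partial_t w = {} & g_1^{pq}\tinabla_p\tinabla_q w + w*\tinabla^2 g_2*g^{-1}*g^{-1} + \tinabla w*(\tinabla g_1+\tinabla g_2)*g^{-1}*g^{-1}\\
& + w*\tinabla g_i*\tinabla g_j*g^{-1}*g^{-1} + w*\widetilde{\mathrm{Rm}}*g^{-1}*g^{-1}.
\end{split}
\end{equation*}

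Next I would fix the smooth function $\rho$ from \cite{Tam2010} with $|\tinabla\rho|\le1$ and $\rho\asymp d_h(\cdot,p)+1$ for a fixed $p\in M$, choose $\gamma=\gamma(n)$ large enough that $\psi:=e^{-\gamma\rho}$ is integrable on $(M,\vol_h)$ — possible since $\Ric(h)\ge-(n-1)h$ forces at most exponential volume growth, and then $|w|\le C_0$ also gives $\int_M\psi|w|^2\,d\vol_h<\infty$ — and study the evolution of $\int_M\psi|w|^2\,d\vol_h$. Integrating by parts in the second-order term produces the good term $-c(n,\Lambda_0)\int_M\psi|\tinabla w|^2\,d\vol_h$, together with a $\gamma$‑weighted term controlled (via $|\tinabla\psi|\le\gamma\psi$ and Cauchy–Schwarz) by $\tfrac{c}{4}\int_M\psi|\tinabla w|^2+C\gamma^2\int_M\psi|w|^2$, and a term of the type $w*\tinabla g_1*\tinabla w$ treated below. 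The curvature term and the remaining algebraic terms not involving $\tinabla g_i$ contribute at most $C(n,\Lambda_0)\int_M\psi|w|^2\,d\vol_h$.

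The heart of the matter is the terms carrying $\tinabla g_i$ or $\tinabla^2 g_2$, whose coefficients a priori blow up as $t\to0$. For the $\tinabla^2 g_2$ term I would integrate by parts once more to move a derivative off $\tinabla^2 g_2$, so that everything reduces to integrals of $\psi$ against products of $|w|$, $|\tinabla w|$, $|\tinabla g_i|$ and $|\tinabla\psi|$. Each resulting term carrying a factor $|\tinabla g_i|^2$ is then estimated on each ball of a uniformly locally finite cover of $M$ by unit balls (on each of which $\psi$ is comparable to a constant): Hölder's inequality with exponents $\tfrac{n}{n-2},\tfrac{n}{2}$, the local Sobolev inequality \eqref{Sobo-ineq}, and hypothesis (b) give, for a cutoff $\eta$ with $\eta\equiv1$ on $B_h(x,1)$ and $\supp\eta\subset B:=B_h(x,2)$,
\begin{equation*}
\int_B\eta^2|w|^2|\tinabla g_i|^2\,d\vol_h \le C_S\big(C(n,\Lambda_0)\e_0\big)^2\int_B\big(\eta^2|\tinabla w|^2+C|w|^2\big)\,d\vol_h,
\end{equation*}
and, after one more Cauchy–Schwarz, $\int_B\eta^2|w||\tinabla w||\tinabla g_i|\le C\e_0\int_B\eta^2|\tinabla w|^2+C\e_0\int_B|w|^2$. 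Summing over the cover (the overlap is bounded by a dimensional constant) and choosing $\e_0=\e_0(n,\Lambda_0)$ small enough that $C_S(C(n,\Lambda_0)\e_0)^2$ is a sufficiently small multiple of $c$, all the gradient contributions are absorbed into $\tfrac{c}{2}\int_M\psi|\tinabla w|^2$ plus a harmless $C\int_M\psi|w|^2$. The crucial point is that, since (b) yields genuine smallness at every fixed time, no inverse power of $t$ is generated; one is left with the differential inequality
\begin{equation*}
\frac{d}{dt}\int_M\psi|w|^2\,d\vol_h \le C(n,\Lambda_0,\gamma)\int_M\psi|w|^2\,d\vol_h,\qquad t\in(0,T].
\end{equation*}

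Finally, because $g_1(t),g_2(t)\to g_0$ in $L^2_{\loc}$ as $t\to0$ while $|w|\le C_0$ and $\psi\in L^1(M,\vol_h)$, splitting $M$ into a large $h$-ball and its complement and invoking dominated convergence shows $\int_M\psi|w(t)|^2\,d\vol_h\to0$ as $t\to0$; Grönwall's inequality, integrated from $0^+$, then forces $\int_M\psi|w(t)|^2\,d\vol_h=0$ for all $t\in(0,T]$, i.e. $g_1\equiv g_2$. I expect the main obstacle to be exactly the one highlighted above: arranging that the a priori singular quantities $\tinabla g_i$ and $\tinabla^2 g_2$ do not produce a non-integrable $t^{-1}$ factor in the differential inequality, which is precisely where the small local gradient concentration (b) together with the Sobolev inequality \eqref{Sobo-ineq} is indispensable. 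The subsidiary points — differentiating under the integral sign, the two integrations by parts over the non-compact $M$, and the interior parabolic regularity on $M\times[a,b]$ ($a>0$, via Lemma~\ref{lma:localEstimate-fromC^1}) that justifies them — are routine in view of the rapidly decaying weight $\psi$ and the bounds (a), (b).
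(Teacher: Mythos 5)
Your core estimate is the same as the paper's: write the difference of two solutions, integrate by parts in the second-order terms (including one extra integration by parts to remove $\tilde\nabla^2$ of the second solution), and absorb every term carrying $|\tilde\nabla g_i|$ by H\"older with exponents $\tfrac{n}{n-2},\tfrac n2$, the local Sobolev inequality \eqref{Sobo-ineq} plus Kato, and the smallness in (b), so that no $t^{-1}$ factor appears — this is exactly the mechanism in the paper's proof of Theorem~\ref{thm-unique}. Where you genuinely diverge is the globalization and the treatment of $t\to 0$: you use a single exponentially decaying weight $\psi=e^{-\gamma\rho}$, get one differential inequality for $\int_M\psi|w|^2$, send $t\to0$ by (c) plus dominated convergence, and conclude by Gr\"onwall; the paper instead works with compactly supported cutoffs at unit scale, obtains the local inequality \eqref{evo-difference}, and then runs a maximal-time argument (the $S_\e$ defined by $\sup_z\int_{B_h(z,1)}|\a|^2<\e$, whose positivity comes from Proposition~\ref{timezero-regularity-W1n}) to show uniqueness on a definite time interval, avoiding any global weight. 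Your route is arguably cleaner (one Gr\"onwall, no bootstrapping in $\e$), but it puts the burden on justifying the weighted integration by parts and differentiation under the integral on noncompact $M$: note that Lemma~\ref{lma:localEstimate-fromC^1}, which you cite for this, is not directly applicable since its hypothesis is a \emph{global} bound $\sup_M|\tilde\nabla g(t)|\le C_1t^{-1/2}$, which is not part of the uniqueness class (a)--(c). This is repairable without it: run the estimate on $\chi_R^2\psi|w|^2$ for compactly supported cutoffs $\chi_R$ and let $R\to\infty$, using that (b) together with H\"older gives a uniform bound $\int_{B_h(x,1)}(|\tilde\nabla g_1|^2+|\tilde\nabla g_2|^2)\,d\vol_h\le C(n,\Lambda_0)$ on every unit ball, so that $\int_M\psi|\tilde\nabla w|^2<\infty$ and the annulus error terms decay once $\gamma=\gamma(n)$ beats the exponential volume growth; with that replacement your argument is complete. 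By contrast, the paper's local scheme never needs integrability at infinity but pays with the slightly more delicate $S_\e$ argument and the time-zero $L^2$ regularity proposition.
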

\begin{proof}
Suppose $g(t)$ and $\hat{g}(t)$ are two solutions of $h$-flow satisfying (a), (b) and (c). We will use $C_i$ to denote constants depending only on $n$ and $\Lambda_{0}$. By the Ricci-Deturck $h$-flow equation \eqref{eqn: h-flow}, $\a(t)=g(t)-\hat g(t)$ satisfies
\begin{equation}
\begin{split}
\partial_t \a_{ij}&=g^{pq}\tilde\nabla_p \tilde\nabla_q g_{ij}-\hat g^{pq}\tilde\nabla_p \tilde\nabla_q \hat g_{ij}+ \widetilde{\mathrm{Rm}}*g^{-1}*\hat g^{-1}* g*\a \\
&\quad +  \widetilde{\mathrm{Rm}}*\hat g^{-1}*\a + g^{-1}*\hat g^{-1}*g^{-1}* \tilde\nabla g*\tilde\nabla g* \a\\
&\quad + \hat g^{-1} *\hat g^{-1} * g^{-1}*\tilde\nabla g*\tilde\nabla g*\a +  \hat g^{-1} *\hat g^{-1} *\tilde\nabla g*\tilde\nabla \a\\
&\quad +  \hat g^{-1} *\hat g^{-1} *\tilde\nabla \hat g*\tilde\nabla \a.
\end{split}
\end{equation}
Let $\eta$ be a smooth cutoff function on $M$ such that $\eta\equiv 1$ on $B_h(x,1)$, $\eta\equiv0$ on $M\setminus B_h(x,2)$ and $|\tilde\nabla\eta|\leq 10^4$. Using assumption (i) and the Cauchy-Schwarz inequality, for any $\tau\in(0,1)$,
\begin{equation}
\begin{split}
& \frac{\partial}{\partial t}\int_M \eta^{4} |\a|^2 \; d\mathrm{vol}_h\\
\leq {} & \int_M 2\eta^{4}h^{ik}h^{jl} \a_{kl} \left[g^{pq}\tilde\nabla_p \tilde\nabla_q \a_{ij}+\left(g^{pq}-\hat g^{pq}\right)\tilde\nabla_p \tilde\nabla_q \hat g_{ij}\right]\\
&+C_1\tau^{-1} \int_M \eta^{4} |\a|^2(1+|\tilde \nabla g|^2+|\tilde \nabla \hat{g}|^2) \; d\mathrm{vol}_h+ \tau  \int_M \eta^{4} |\tilde \nabla \a|^2 \; d\mathrm{vol}_h\\
= {} & -2\int_M h^{ik}h^{jl}\tilde\nabla_p\left( \eta^{4}\a_{kl} g^{pq} \right) \tilde\nabla_q \a_{ij}\; d\mathrm{vol}_h
+\int_M \eta^{4} g^{-1}*\hat g^{-1}*\a * \a *\tilde\nabla^2 \hat g  \, d\mathrm{vol}_h\\
&+C_1\tau^{-1} \int_M \eta^{4} |\a|^2(1+|\tilde \nabla g|^2+|\tilde \nabla \hat{g}|^2) \; d\mathrm{vol}_h+ \tau \int_M \eta^{4} |\tilde \nabla \a|^2 \; d\mathrm{vol}_h\\
\leq {} & -C_2^{-1} \int_M \eta^{4}|\tilde\nabla \a|^2 \;d\mathrm{vol}_h+ \int_M \eta^{4} g^{-1}*\hat g^{-1}*\a * \a *\tilde\nabla^2 \hat g  \, d\mathrm{vol}_h\\
&+ C_2\int_M \eta^{2} |\a|^2  \;d\mathrm{vol}_h+C_2\tau^{-1}\int_M \eta^{4} |\a|^2(1+|\tilde \nabla g|^2+|\tilde \nabla \hat{g}|^2)\, d\mathrm{vol}_h
+\tau \int_M \eta^{4} |\tilde \nabla \a|^2 \; d\mathrm{vol}_h.
\end{split}
\end{equation}
Choosing $\tau=(2C_{2})^{-1}$, we obtain
\begin{equation}\label{thm-unique eqn 1}
\begin{split}
& \frac{\partial}{\partial t}\int_M \eta^{4} |\a|^2 \; d\mathrm{vol}_h \\
\leq {} & -C_3^{-1} \int_M \eta^{4}|\tilde\nabla \a|^2 \;d\mathrm{vol}_h+ \int_M \eta^{4} g^{-1}*\hat g^{-1}*\a * \a *\tilde\nabla^2 \hat g  \, d\mathrm{vol}_h\\
&+ C_3\int_M \eta^{2} |\a|^2  \;d\mathrm{vol}_h+C_3\int_M \eta^{4} |\a|^2(1+|\tilde \nabla g|^2+|\tilde \nabla \hat{g}|^2)\, d\mathrm{vol}_h.
\end{split}
\end{equation}
For the second term on the right hand side of \eqref{thm-unique eqn 1}, integrating by parts and using the Cauchy-Schwarz inequality, for any $\tau\in(0,1)$,
\begin{equation}\label{thm-unique eqn 3}
\begin{split}
&\quad \int_M \eta^{4} g^{-1}*\hat g^{-1}*\a * \a *\tilde\nabla^2 \hat g  \, d\mathrm{vol}_h\\
&= \int_M \tilde \nabla g * \tilde\nabla (\eta^{4} g^{-1} *\hat g^{-1} * \a *\a) \,d\mathrm{vol}_h\\
&\leq C_4 \int_{M} |\tilde\nabla g| \left(\eta^{3}|\a|^2+\eta^{4}|\tilde\nabla g||\a|^2+\eta^{4}|\tilde\nabla \hat g||\a|^2+ \eta^{4} |\a| |\tilde\nabla \a| \right)d\vol_{h}\\
&\leq  C_5 \int_{M}\left(\tau^{-1}\eta^{4}|\a|^2 (|\tilde \nabla g|^{2}+|\tilde\nabla \hat g|^{2})+ \eta^{2}|\a|^2 \right)d\vol_{h}
+ \tau\int_{M} \eta^{4}|\tilde \nabla \a|^2\;d\vol_{h}
\end{split}
\end{equation}
Choosing $\tau=(2C_{3})^{-1}$ and substituting \eqref{thm-unique eqn 3} into \eqref{thm-unique eqn 1},
\begin{equation}\label{thm-unique eqn 6}
\begin{split}
\frac{\partial}{\partial t}\int_M \eta^{4} |\a|^2 \; d\mathrm{vol}_h
\leq {} & -C_6^{-1} \int_M \eta^{4}|\tilde\nabla \a|^2 \;d\mathrm{vol}_h
+C_6\int_M |\a|^2 \eta^{2} \;d\mathrm{vol}_h \\
& +C_6\int_M \eta^{4}|\a|^2(1+|\tilde \nabla g|^2+|\tilde\nabla\hat g|^2) \; d\mathrm{vol}_h.
\end{split}
\end{equation}
We squeeze more negativity from the first term by using the Sobolev inequality \eqref{Sobo-ineq} and Kato's inequality:
\begin{equation}
\begin{split}
|B_h(x,2)|^{2/n} \left(\int_M (\eta^{4}|\a|^2)^\frac{n}{n-2} \right)^\frac{n-2}{n}
\leq {} & C_{7}\int_{M}|\tinabla(\eta^{2}|\alpha|)|^{2}d\vol_{h} \\
\leq {} & C_{7}\int_{M}(\eta^{2}|\alpha|^{2}+\eta^{4}|\tinabla\alpha|^{2})d\vol_{h}
\end{split}
\end{equation}
and then
\begin{equation}\label{thm-unique eqn 2}
\begin{split}
& -C_6^{-1} \int_M \eta^{4}|\tilde\nabla \a|^2 \;d\mathrm{vol}_h \\
\leq {} & -C_{8}^{-1}|B_h(x,2)|^{2/n} \left(\int_M (\eta^{4}|\a|^2)^\frac{n}{n-2} \right)^\frac{n-2}{n}
+ C_8\int_M \eta^{2} |\a|^2  \;d\mathrm{vol}_h.
\end{split}
\end{equation}
Substituting \eqref{thm-unique eqn 2} into \eqref{thm-unique eqn 6},
\begin{equation}\label{thm-unique eqn 4}
\begin{split}
& \frac{\partial}{\partial t}\int_M \eta^{4} |\a|^2 \; d\mathrm{vol}_h\\
\leq {} & -C_9^{-1} |B_h(x,2)|^{2/n}\left(\int_M (\eta^{4}|\a|^2)^\frac{n}{n-2} \;d\mathrm{vol}_h\right)^\frac{n-2}{n}\\
& + C_9\int_M |\a|^2 \eta^{2} \;d\mathrm{vol}_h+C_9\int_M \eta^{4}|\a|^2(1+|\tilde \nabla g|^2+|\tilde\nabla\hat g|^2) \; d\mathrm{vol}_h.
\end{split}
\end{equation}
By assumption (ii) and a covering argument,
\begin{equation}
\begin{split}
&  \int_M \eta^{4} |\a|^2 |\tilde\nabla g|^2 \;d\mathrm{vol}_h\\
\leq {} & \left(\int_M \left(\eta^{4}|\a|^2\right)^\frac{n}{n-2} \, d\mathrm{vol}_h \right)^\frac{n-2}{n} \left(\fint_{B_h(x,2)} |\tilde\nabla g|^{n} \, d\mathrm{vol}_h \right)^{2/n} |B_h(x,2)|^{2/n}\\
\leq {} & C_n\e_0 \left(\int_M \left(\eta^{4} |\a|^2\right)^\frac{n}{n-2} \, d\mathrm{vol}_h \right)^\frac{n-2}{n} |B_h(x,2)|^{2/n}.
\end{split}
\end{equation}
Similarly,
\begin{equation}
\int_M \eta^{4} |\a|^2 |\hat\nabla g|^2 \;d\mathrm{vol}_h
\leq C_n\e_0 \left(\int_M \left(\eta^{4} |\a|^2\right)^\frac{n}{n-2} \, d\mathrm{vol}_h \right)^\frac{n-2}{n}\cdot |B_h(x,2)|^{2/n}.
\end{equation}
Therefore, if $\e_0$ is sufficiently small depending only on $n$ and $\Lambda_{0}$, then \eqref{thm-unique eqn 4} can be reduced to
\begin{equation}\label{evo-difference}
\begin{split}
 \frac{\partial}{\partial t}\int_M \eta^{4} |\a|^2 \; d\mathrm{vol}_h
&\leq C_{10}\int_M \eta^{2}|\a|^2  \;d\mathrm{vol}_h+C_{10}\int_M \eta^{4} |\a|^2 \; d\mathrm{vol}_h.
\end{split}
\end{equation}

For any $\e>0$, let $S_{\e}$ be the maximal time such that for all $(z,t)\in M\times (0,S_\e]$,
\begin{equation}\label{ap-diff-bound}
\int_{B_h(z,1)}|\a|^2\, d\mathrm{vol}_h<\e.
\end{equation}
Proposition~\ref{timezero-regularity-W1n} shows $S_{\e}>0$. Then there is $x_{0}\in M$ such that
\begin{equation}\label{thm-unique eqn 5}
\left(\int_{B_h(x_{0},1)}|\a|^2\, d\mathrm{vol}_h\right)\bigg|_{t=S_{\e}} \geq \frac{\e}{2}.
\end{equation}
We claim that $S_\e$ is bounded from below uniformly (independent of $\e$). Combining \eqref{evo-difference} with \eqref{ap-diff-bound}, and using a covering argument, for all $t\in (0,S_\e]$,
\begin{equation}
\frac{\partial}{\partial t}\left(e^{-C_{10}t}\int_M \eta^{4}|\a|^2 \; d\mathrm{vol}_h\right) \leq C_{11}\e e^{-C_{10}t} \leq C_{11}\e.
\end{equation}
Integrating this on $[s,S_\e]$, letting $s\to 0$, and using the fact that $g(t),\hat g(t)\to g_0$ in $L^2_{\loc}$ as $t\to 0$, we conclude that
\begin{equation}
\left(\int_{B_h(x,1)} |\a|^2 \, d\mathrm{vol}_h\right)\bigg|_{t=S_\e}\leq C_{11}\e e^{C_{10}S_\e} S_\e.
\end{equation}
Combining this with \eqref{thm-unique eqn 5}, we obtain $S_\e\geq S_0(n,\Lambda)>0$ which is independent of $\e$.  This gives the uniqueness on $M\times (0,S_0]$ as $\e$ is arbitrary.  The uniqueness beyond $S_0$ is standard thanks to the smoothness of $g(t)$ for $t>0$.
\end{proof}

\medskip

We now show that if $(M,h)$ is non-collapsed at infinity, then the small gradient concentration assumption in Theorem~\ref{thm-existence} can always be achieved by scaling if in addition $g_0\in W^{1,n}$ globally.
\begin{prop}\label{scaling-assumption}
Suppose $h$ is a smooth metric satisfying assumption $(\star)$. If in addition $\mathrm{inj}(M,h)>0$ and $g_0\in W^{1,n}$,  then for any $\e>0$, there is $r_\e\in(0,1)$ such that for all $x\in M$, we have
\begin{equation}
\left( r_{\e}^n \fint_{B_h(x,r_{\e})} |\tilde\nabla g_0|^n\; d\mathrm{vol}_h\right)^{1/n}<\e.
\end{equation}
\end{prop}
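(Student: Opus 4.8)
I would prove this by reducing the scale-invariant average to a genuine integral of $|\tilde\nabla g_0|^n$ over a small ball and then invoking absolute continuity of the Lebesgue integral. The first step is to record the two volume estimates for $h$ that make this reduction uniform in $x$. Since $|\Rm(h)|\leq 1$ the sectional curvatures of $h$ are bounded, and, together with $\mathrm{inj}(M,h)=i_0>0$, G\"unther's volume comparison supplies a dimensional constant $c_n>0$ and an $r_0=r_0(n,i_0)\in(0,1)$ so that $|B_h(x,r)|\geq c_n r^n$ for all $x\in M$ and $r\in(0,r_0]$. In the other direction, $\mathrm{Ric}(h)\geq-(n-1)h$ and Bishop--Gromov give $|B_h(x,r)|\leq V_{-1}(r)$ for every $x$ and $r$, where $V_{-1}(r)\to0$ as $r\to0$.

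Next I would use the \emph{global} hypothesis $g_0\in W^{1,n}$, which says precisely that $f:=|\tilde\nabla g_0|^n\in L^1(M,d\mathrm{vol}_h)$. By absolute continuity of the integral, for every $\delta>0$ there is $\eta=\eta(\delta)>0$ such that $\int_E f\,d\mathrm{vol}_h<\delta$ whenever $E\subset M$ is measurable with $|E|<\eta$. Then, given $\e>0$, I set $\delta:=c_n\e^n$, take the corresponding $\eta$, and choose $r_\e\in(0,r_0)\subset(0,1)$ small enough that $V_{-1}(r_\e)<\eta$. For any $x\in M$ the ball $E=B_h(x,r_\e)$ has $|E|\leq V_{-1}(r_\e)<\eta$, so combining the two steps,
\[
r_\e^n\fint_{B_h(x,r_\e)}|\tilde\nabla g_0|^n\,d\mathrm{vol}_h=\frac{r_\e^n}{|B_h(x,r_\e)|}\int_{B_h(x,r_\e)}f\,d\mathrm{vol}_h\leq\frac{1}{c_n}\int_{B_h(x,r_\e)}f\,d\mathrm{vol}_h<\frac{\delta}{c_n}=\e^n,
\]
and taking $n$-th roots gives the claim.

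I do not expect a serious obstacle; the only delicate point is that the smallness has to hold uniformly over all $x\in M$. This is exactly what the two ingredients provide: the lower bound $|B_h(x,r)|\geq c_n r^n$ and the upper bound $|B_h(x,r)|\leq V_{-1}(r)$ are uniform because $h$ has bounded geometry and positive injectivity radius, while the uniform smallness of $\int_{B_h(x,r_\e)}f$ rests on $f\in L^1(M)$ globally — this is the one place where the hypothesis $g_0\in W^{1,n}$ on all of $M$ (rather than merely $W^{1,n}_{\loc}$) is genuinely used.
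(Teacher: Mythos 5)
Your proof is correct, and it takes a cleaner route than the paper's. Both arguments share the same skeleton: use the non-collapsing $|B_h(x,r)|\geq c_n r^n$ (coming from bounded curvature plus $\mathrm{inj}(M,h)>0$) to reduce the scale-invariant average to the statement that every sufficiently small ball carries at most $c_n\e^n$ of the total energy $\int_M|\tilde\nabla g_0|^n\,d\mathrm{vol}_h$, uniformly in the center. Where you differ is in how that uniform smallness is obtained. The paper splits $M$ into a large ball $B_h(x_0,R+1)$ and its complement: on the complement it uses that the tail of the global $L^n$-norm of $\tilde\nabla g_0$ is below $\e$ (so even unit balls there have small energy), and on the compact piece it argues by contradiction with sequences $x_i\to x_\infty$, $r_i\to 0$ and dominated convergence. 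You instead invoke absolute continuity of the integral of $f=|\tilde\nabla g_0|^n\in L^1(M)$ — valid over arbitrary measurable sets, by the standard truncation $\int_E f\leq K|E|+\int_{\{f>K\}}f$ — together with the Bishop--Gromov upper bound $|B_h(x,r)|\leq V_{-1}(r)\to 0$, which makes every ball of radius $r_\e$ have uniformly small measure regardless of where it sits. This removes both the compact/tail decomposition and the sequential compactness step, at the cost of needing the (entirely standard) uniform upper volume bound, which the paper uses elsewhere anyway; the paper's version, on the other hand, isolates more explicitly the single place where global $W^{1,n}$ (rather than $W^{1,n}_{\loc}$) enters, namely the smallness of the tail, whereas in your argument the global hypothesis enters through $f\in L^1(M)$ in the absolute continuity step — which you correctly flag. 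The one cosmetic point: under $(\star)$ the curvature bound is $|\Rm(h)|\leq C(n)$ rather than $\leq 1$, so the comparison spaces should have curvature $\pm C(n)$; this changes nothing in the constants' structure.
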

\begin{proof}
By assumption on injectivity radius of $h$, there is $v_0>0$ such that for all $x\in M$,
\begin{equation}\label{scaling-assumption eqn 1}
\mathrm{Vol}_h\left( B_h(x,r)\right)\geq v_0r^n
\end{equation}
for all $r\in (0,1]$. Let $\e>0$ be fixed and $x_0\in M$. By assumption $g_{0}\in W^{1,n}$, there is $R>1$ such that
\begin{equation}\label{scaling-assumption eqn 2}
\left(\int_{M\setminus B_h(x_0,R)} |\tilde\nabla g_0|^n \; d\mathrm{vol}_h\right)^{1/n} <\e .
\end{equation}
We claim that there is $r_0(\e)\in(0,1)$ such that for all $x\in B_h(x_0,R+1)$,
\begin{equation}\label{scaling-assumption eqn 3}
\left(\int_{B_h(x,r_0)} |\tilde\nabla g_0|^n \; d\mathrm{vol}_h  \right)^{1/n}<\e.
\end{equation}
Suppose the claim is false, there is a sequence of $x_i\in B_h(x_0,R+1)$ and $r_i\to 0$ such that
\begin{equation}
\left(\int_{B_h(x_i,r_i)} |\tilde\nabla g_0|^n \; d\mathrm{vol}_h  \right)^{1/n} \geq \e.
\end{equation}
We may assume $x_i\to x_\infty\in B_h(x_0,R+2)$. Since $r_i\to 0$, then $\lim_{i\to +\infty}\chi_{B_h(x_i,r_i)}= \chi_{\{x_\infty\}}$ and Lebesgue's dominated convergence theorem implies that
\begin{equation}
\lim_{i\to +\infty}\left(\int_{B_h(x_i,r_i)} |\tilde\nabla g_0|^n \; d\mathrm{vol}_h  \right)^{1/n} =0,
\end{equation}
which is impossible.

On the other hand, for all $x\in M\setminus B_h(x_0,R+1)$,  we have $B_h(x,1)\Subset M\setminus B_h(x_0,R)$. By \eqref{scaling-assumption eqn 2},
\begin{equation}\label{scaling-assumption eqn 4}
\left(\int_{B_h(x,1)}  |\tilde\nabla g_0|^n \; d\mathrm{vol}_h  \right)^{1/n}
\leq \left(\int_{M\setminus B_h(x_0,R)}  |\tilde\nabla g_0|^n \; d\mathrm{vol}_h  \right)^{1/n} < \e.
\end{equation}
Then the conclusion follows from \eqref{scaling-assumption eqn 3} and \eqref{scaling-assumption eqn 4} by relabelling the constant, using \eqref{scaling-assumption eqn 1} and volume comparison theorem.
\end{proof}

Proposition~\ref{scaling-assumption} in particular says that the rescaled metric $\hat g_0=r_{\e}^{-2}g_0$ and $\hat h=r_{\e}^{-2}h$ will remain bi-Lipschitz while the small gradient concentration can be achieved everywhere so that  Theorem~\ref{thm-existence} applied to obtain a global Ricci-Deturck $\hat h$-flow $\hat g(t)$ starting from $\hat g_0$ (in $W^{1,n}_{\loc}$ sense) with scaling invariant estimates.

\begin{thm}\label{shorttime-compact}
Let $(M^n,h)$ be a complete Riemannian manifold with bounded curvature and $\mathrm{inj}(M,h)>0$. Suppose $g_0$ is a $L^\infty\cap W^{1,n}$ Riemannian metric on $M$ such that
\begin{equation}
\Lambda_0^{-1}h\leq g_0\leq \Lambda_0 h
\end{equation}
for some $\Lambda_0>1$. Then there is a smooth solution $g(t)$ to the Ricci-Deturck $h$-flow on $M\times (0,S]$ for some $S>0$ such that
\begin{enumerate}
\item[(a)] $(\Upsilon_n\Lambda_0)^{-1} h\leq g(t)\leq \Upsilon_n\Lambda_0 h$ on $M\times (0,S]$;
\item[(b)] For any $k\in \mathbb{N}$,
\begin{equation*}
\limsup_{t\to 0^+}\left(\sup_M t^{k/2}|\tilde\nabla^k g(t)|\right)= 0;
\end{equation*}
\item[(c)] If $g_0\in C^\infty_{\loc}(\Omega)$ for some $\Omega\Subset M$, then $g(t)\to g_0$ in $C^\infty_{\loc}(\Omega)$ as $t\to 0$;
\item[(d)] $g(t)\to g_0$ in $W^{1,n}_{\loc}$ as $t\to 0$,
\end{enumerate}
where $\Upsilon_n$ is the constant obtained in Lemma \ref{borderline-trace}.
\end{thm}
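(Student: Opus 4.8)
The plan is to deduce Theorem~\ref{shorttime-compact} from the quantitative existence result Theorem~\ref{thm-existence} via a parabolic rescaling, the new ingredient being that $g_0 \in W^{1,n}$ globally (not merely $W^{1,n}_\loc$). First I would normalise the background: after multiplying $h$ by a constant $\geq 1$ we may assume $|\Rm(h)| \leq 1$ (which does not affect $\mathrm{inj}(M,h) > 0$ or the bi-Lipschitz bound), and, exactly as in the proof of Theorem~\ref{thm-existence}, a further slight perturbation via Proposition~\ref{prop:RF-smoothing} lets us assume that $h$ satisfies $(\star)$ --- both reductions enlarge $\Lambda_0$ by at most a controlled factor. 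Fixing $\e := \tfrac{1}{2}\e_0(n,\Lambda_0)$ with $\e_0$ the threshold of Theorem~\ref{thm-existence}, I would invoke Proposition~\ref{scaling-assumption} --- which applies because $\mathrm{inj}(M,h) > 0$ and $g_0 \in W^{1,n}$ --- to obtain a radius $r_\e \in (0,1)$ with $\left( r_\e^n \fint_{B_h(x,r_\e)} |\tilde\nabla g_0|^n \, d\vol_h \right)^{1/n} < \e$ for every $x \in M$. Setting $\hat h := r_\e^{-2} h$ and $\hat g_0 := r_\e^{-2} g_0$, the bi-Lipschitz bound is unchanged, $|\Rm(\hat h)| = r_\e^2 |\Rm(h)| \leq 1$, and, since a constant rescaling leaves the Levi-Civita connection fixed, the unit-scale gradient concentration of $\hat g_0$ with respect to $\hat h$ is precisely $\left( r_\e^n \fint_{B_h(x,r_\e)} |\tilde\nabla g_0|^n \, d\vol_h \right)^{1/n} < \e < \e_0$; hence all hypotheses of Theorem~\ref{thm-existence} are met for $(\hat g_0,\hat h)$.

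Applying Theorem~\ref{thm-existence} to $(\hat g_0,\hat h)$ then produces a smooth Ricci-Deturck $\hat h$-flow $\hat g(\tau)$ on $M \times (0,T]$, $T = T(n,\Lambda_0)$, satisfying properties (a)--(f) there, and I would set $g(t) := r_\e^2 \hat g(r_\e^{-2} t)$ on $M \times (0,S]$ with $S := r_\e^2 T$; this is a smooth Ricci-Deturck $h$-flow, by the scaling invariance of the flow equation \eqref{eqn: h-flow}. Conclusion (a) is Theorem~\ref{thm-existence}(a) rescaled --- the inequality is scale invariant, so the constant $\Upsilon_n\Lambda_0$ is unchanged; and since $r_\e$ is a fixed constant, a parabolic rescaling by $r_\e^2$ does not affect $C^\infty_\loc$ or $W^{1,n}_\loc$ convergence, so conclusions (c) and (d) are exactly Theorem~\ref{thm-existence}(e) and (f).

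The remaining conclusion (b) needs real work, since Theorem~\ref{thm-existence}(c) only bounds $t^{k/2}|\tilde\nabla^k g(t)|$ rather than its decay. For $k = 1$ I would combine uniqueness with the flexibility of Proposition~\ref{scaling-assumption}: given an arbitrary $\e' \in (0,\e_0)$, running the construction above with $\e'$ in place of $\e$ gives a Ricci-Deturck $h$-flow $g^{(\e')}$ attaining $g_0$ in $W^{1,n}_\loc$ (hence $L^2_\loc$) as $t \to 0$, and Theorem~\ref{thm-existence}(d) together with the parabolic scale invariance of $\sup_M t^{1/2}|\tilde\nabla(\cdot)(t)|$ gives $\sup_M t^{1/2}|\tilde\nabla g^{(\e')}(t)| \leq C(n,\Lambda_0)\sqrt{\e' + (r_{\e'}^{-2} t)^{1/n}}$. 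In the rescaled picture, where the initial datum has unit-scale gradient concentration below $\e_0$, both $g$ and $g^{(\e')}$ lie in the uniqueness class of Theorem~\ref{thm-unique} on some short interval $(0,T']$ --- its hypotheses being checked from Theorem~\ref{thm-existence}(a),(b),(f), the parts (a),(d) of the present theorem already established, and the construction of $g$, with constants permitted to depend on $(M,h)$ --- so $g \equiv g^{(\e')}$ near $t = 0$; hence $\limsup_{t \to 0^+}\sup_M t^{1/2}|\tilde\nabla g(t)| \leq C(n,\Lambda_0)\sqrt{\e'}$, and letting $\e' \to 0$ yields the case $k = 1$.

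For $k \geq 2$ I would argue by a parabolic blow-up at $t = 0$. Supposing $\limsup_{t\to 0^+}\sup_M t^{k/2}|\tilde\nabla^k g(t)| > 0$, choose $t_i \downarrow 0$ and $x_i \in M$ with $t_i^{k/2}|\tilde\nabla^k g(x_i,t_i)| \geq \delta > 0$, and look at $\hat g_i(s) := t_i^{-1} g(t_i s)$ and $\hat h_i := t_i^{-1} h$ on $M \times (0,1]$. By the $k = 1$ case, $\sup_M|\tilde\nabla \hat g_i(1)|_{\hat h_i} = t_i^{1/2}\sup_M|\tilde\nabla g(t_i)| \to 0$; by Theorem~\ref{thm-existence}(c) (rescaled), the slices $\hat g_i(1)$ satisfy uniform bounds $\sup_M|\tilde\nabla^m \hat g_i(1)|_{\hat h_i} \leq C(m,n,\Lambda_0)$ for every $m$; and because $h$ satisfies $(\star)$ with $\mathrm{inj}(M,h) > 0$, the rescaled backgrounds $(M,\hat h_i,x_i)$ converge in pointed $C^\infty$ to flat Euclidean space $(\mathbb{R}^n, g_{\mathrm{eucl}}, 0)$. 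Passing to a subsequence, $\hat g_i(1)$ converges in $C^\infty_\loc$ to a metric $g_\infty$ on $\mathbb{R}^n$ with $\tilde\nabla^{\mathrm{eucl}} g_\infty = 0$, i.e.\ with constant coefficients; hence $\tilde\nabla^{\mathrm{eucl},k} g_\infty = 0$, forcing $t_i^{k/2}|\tilde\nabla^k g(x_i,t_i)| = |\tilde\nabla^k \hat g_i(x_i,1)|_{\hat h_i} \to 0$, a contradiction. The step I expect to be genuinely delicate is checking that the flows compared via Theorem~\ref{thm-unique} above really lie in its uniqueness class on a definite interval near $t = 0$ --- in particular that the $C_0 t^{1/n}$ error term in Theorem~\ref{thm-existence}(b) can be absorbed below the relevant threshold after the rescaling; the scaling identities, the covering and volume-comparison bookkeeping, and the Cheeger--Gromov compactness used in the blow-up are all routine.
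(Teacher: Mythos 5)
Your proposal is correct and follows essentially the same route as the paper: existence and conclusions (a), (c), (d) via Proposition~\ref{scaling-assumption} plus a parabolic rescaling of Theorem~\ref{thm-existence}; the $k=1$ decay in (b) by running the construction at arbitrarily fine scales $r_{\e'}$ and invoking the uniqueness of Theorem~\ref{thm-unique} to identify all these flows, then letting $\e'\to 0$; and the $k\geq 2$ cases by a parabolic blow-up at $t=0$ against the flat Euclidean background, exactly as in the paper (the paper phrases the higher orders as an induction, while your observation that $Dg_\infty\equiv 0$ already forces all higher derivatives of the constant-coefficient limit to vanish handles every $k\geq 2$ at once — a harmless simplification).
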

\begin{proof}
By scaling, the existence and conclusion (a), (c), (d) follow from Proposition~\ref{scaling-assumption} and Theorem~\ref{thm-existence}. It remains to show (b).

For any $\e>0$, let $r_\e$ be the constant obtained in Proposition~\ref{scaling-assumption}. Consider the rescaled metric $\hat h=r_\e^{-2}h$ and $\hat g_0=r_\e^{-2}g_0$. Since $r_\e<1$, the new background metric still satisfies $(\star)$ and $\Lambda_0^{-1}\hat h\leq \hat g_0\leq \Lambda_0 \hat h$. In addition, we have
\begin{equation}
\sup_{x\in M} \left(\fint_{B_{\hat h}(x,1)}|\nabla^{\hat h}\hat g_0|^n \; d\mathrm{vol}_{\hat h} \right)^{1/n}<\e.
\end{equation}
Applying Theorem~\ref{thm-existence} on $\hat g_0$ and $\hat h$ and rescale them back to $g_0$ and $h$, we obtain a short-time solution $g(t)$ on $M\times (0,T\cdot r_\e^2]$ with
\begin{equation}\label{estimate-compact}
\left\{
\begin{array}{ll}
(\Upsilon_n\Lambda_0)^{-1}  h\leq  g(t)\leq (\Upsilon_n\Lambda_0)  h;\\[1mm]
\limsup_{t\to 0^+}\left(\sup_M t^{1/2}|\tilde \nabla g(t)|\right)\leq C(n,\Lambda_0)\e^{1/2};\\[2mm]
\sup_M t^{k/2}|\tilde \nabla^k g(t)|\leq C(n,k,\Lambda_0)
\end{array}
\right.
\end{equation}
and converges $g_0$ as $t\to 0$ in $W_{\loc}^{1,n}$.  Here we used the fact that the solution $g(t)$ is unique by Theorem~\ref{thm-unique} so that all solution constructed from rescaling coincides. Since $\e$ is arbitrarily,  we have
\begin{equation}\label{1st-ORDER}
\limsup_{t\to 0^+}\left(\sup_M t^{1/2}|\tilde \nabla g(t)|\right)=0.
\end{equation}

It remains to show that the same holds for $|\tilde\nabla^kg(t)|$, $k>1$.  We start with $k=2$. We claim that
\begin{equation}
\limsup_{t\to 0^+}\left(\sup_M t|\tilde \nabla^2 g(t)|\right)=0.
\end{equation}
Suppose the assertion is false, there is a sequence $x_i\in M$ and $t_i\to 0$ such that
\begin{equation}\label{shorttime-compact eqn 1}
t_i|\tilde\nabla^2 g|(x_i,t_i)> \e_1
\end{equation}
for some $\e_1>0$. Consider $g_i=t_i^{-1}g(t_i)$ and $h_i=t_i^{-1}h$.  By the assumptions of $h$ and $t_i\to 0$,  $(M,h_i,x_i)$ converges to the standard Euclidean space $(\mathbb{R}^n,g_{\mathrm{flat}},0)$ in the smooth pointed Cheeger-Gromov sense, while \eqref{estimate-compact} shows that $g_i$ converges to some smooth metric $g_\infty$ on $\mathbb{R}^n$ which is uniformly equivalent to $g_{\mathrm{flat}}$. By \eqref{shorttime-compact eqn 1}, we obtain $|D^2 g_\infty(0)|>0$.  However, \eqref{1st-ORDER} implies that $Dg_\infty\equiv 0$ on $\mathbb{R}^n$, which is impossible.  The higher order asymptotic can be proved similarly by using induction argument. This completes the proof.
\end{proof}

\begin{rem}
One might compare the result with the work of Miles \cite{Simon2002} where $C^0$ assumption is now replaced by $W^{1,n}$. Both assumptions are also imposed to rule out the possibility of metric cone as the singular model (the possible limiting case). This is reflected in curvature estimates (b).
\end{rem}

\section{Stability on Euclidean space}\label{Sec:stability}

In this section,  we consider the case when $h$ is flat so that the assumption $(\star)$ is scaling invariant for all scale. We will study the asymptotic behaviour of the constructed Ricci-Deturck $h$-flow as $t\to +\infty$. The following long-time existence follows directly from scaling argument.
\begin{thm}\label{Thm:LT}
For any $\Lambda_0>1$, there is $\e_0(n,\Lambda_{0})>0$ such that the following holds.  Suppose $(M,h)$ is a complete non-compact flat manifold. If $g_0\in L^\infty\cap W^{1,n}_{\loc}$ is a Riemannian metric (not necessarily smooth) such that
\begin{enumerate}
\item[(i)] $\displaystyle \Lambda_0^{-1}h \leq g_0\leq \Lambda_0 h$ on $M$;
\item[(ii)]
$\displaystyle \sup_{x\in M,r>1} \left(r^n\fint_{B_h(x,r)} |\tilde \nabla g_0|^n d\mathrm{vol}_h\right)^{1/n}\leq \e_0$ ;
\end{enumerate}
then there is a long-time solution $g(t)$ to the Ricci-Deturck $h$-flow on $M\times (0,+\infty)$ so that $g(t)\to g_0$ in $W^{1,n}_{\loc}$  as $t\to 0$ and for all $t\in (0,+\infty)$,
\begin{enumerate}
\item[(a)] $(C_n\Lambda_0)^{-1}h \leq g(t)\leq C_n\Lambda_0 h$ for some $C_n>1$;
\item[(b)] for all $k\in \mathbb{N}$, there is $C(n,k,\Lambda_0)>0$ such that for all $t\in (0,+\infty)$,
$$\sup_M |\tilde\nabla^kg(t)|^2\leq \frac{C(n,k,\Lambda_0)}{t^k}.$$
\item[(c)] for all $t\in (0,+\infty)$,
$$\sup_{x\in M,r>1}\left(r^n \fint_{B_h(x,r)} |\tilde\nabla g|^n\, d\mathrm{vol}_h \right)^{1/n} \leq C_n\e _0+C_0(n,\Lambda_0)(r^{-2}t)^{1/n}$$
\end{enumerate}
In particular, $g(t)$ sub-converges to some flat metric on $M$ as $t\to +\infty$.
\end{thm}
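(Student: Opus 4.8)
\emph{The plan} is to combine the short-time theory (Theorem~\ref{thm-existence}) with the \emph{exact} scale invariance that becomes available when $h$ is flat, packaged as a continuity argument. First note that since $h$ is flat and $M$ is non-compact, $h$ already satisfies $(\star)$: every derivative of $\Rm(h)$ vanishes and $\mathrm{diam}(M,h)=+\infty>4$, so no preliminary perturbation is needed. Taking $r=1$ in hypothesis (ii) is exactly hypothesis (ii) of Theorem~\ref{thm-existence} with $\e=\e_0$, so Theorem~\ref{thm-existence} together with Theorem~\ref{thm-unique} produces a \emph{unique} smooth solution $g(t)$ on $M\times(0,T]$, $T=T(n,\Lambda_0)$, obeying (a)--(f) there. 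The point special to flat $h$ is twofold: (1) the parabolic rescaling $g\mapsto\lambda^{2}g(\lambda^{-2}\,\cdot\,)$, $h\mapsto\lambda^{2}h$ preserves $(\star)$ and leaves every a priori estimate of Section~\ref{Sec:apriori} invariant for \emph{all} $\lambda>0$, and the Morrey-type quantity $\sup_{x,r>1}(r^{n}\fint_{B_h(x,r)}|\tinabla g_0|^{n}d\vol_h)^{1/n}$ only decreases under zooming out; (2) the curvature term $\tilde R$ in \eqref{eqn: h-flow} vanishes, so the evolution inequalities used to prove Lemma~\ref{borderline-C1} and Lemma~\ref{borderline-trace} lose their inhomogeneous terms, becoming $\de_t|\tinabla g|^{2}\le g^{ab}\tinabla_a\tinabla_b|\tinabla g|^{2}-\Lambda^{-1}|\tinabla^{2}g|^{2}+C|\tinabla g|^{4}$ and $\de_t\tr_hg\le g^{ab}\tinabla_a\tinabla_b\tr_hg+C|\tinabla g|^{2}$ (and similarly for $\tr_{g(t)}h$). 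Thus the right-hand sides carry only $\int|\tinabla g|^{2}$ and $\int|\tinabla g|^{n}$ integrals, which are controlled scale-invariantly by the quantity in (ii), with \emph{no term growing linearly in the elapsed time}.

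Now run the continuity argument. Let $[0,T_{\max})$ be the maximal interval on which a smooth solution exists with $(2\Upsilon_n\Lambda_0)^{-1}h\le g(t)\le 2\Upsilon_n\Lambda_0 h$ and with scale-invariant gradient concentration below $\min\{\delta_1(n,\Upsilon_n\Lambda_0),\delta_2(n,\Lambda_0)\}$ at every scale, $\Upsilon_n$ from Lemma~\ref{borderline-trace}. On this interval the scale-$r$ form of Lemma~\ref{borderline-C1}, combined with the volume identity on flat $M$, yields for all $x\in M$, $r>1$ and $t\in[0,T_{\max})$
\be
\left(r^{n}\fint_{B_h(x,r)}|\tinabla g(t)|^{n}\,d\vol_h\right)^{1/n}\le C_n\e_0+C_0(r^{-2}t)^{1/n},
\ee
which is conclusion (c); feeding this into the scale-$\sqrt t$ form of Lemma~\ref{lma:localEstimate-fromC^1conc} controls $|\tinabla g(t)|$ at all scales below $\sqrt t$ as well, so the concentration stays strictly below the thresholds provided $\e_0(n,\Lambda_0)$ is small. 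Then the flat-space form of Lemma~\ref{borderline-trace}, run as its own maximal-time argument but with the improved time-uniform source term, upgrades this to $(\Upsilon_n\Lambda_0)^{-1}h\le g(t)\le\Upsilon_n\Lambda_0 h$ on all of $[0,T_{\max})$, without the factor $2$ and uniformly in $t$. With the bi-Lipschitz bound and the concentration bound in hand, Lemma~\ref{lma:localEstimate-fromC^1conc} and Lemma~\ref{lma:localEstimate-fromC^1} give $\sup_M|\tinabla^{k}g(t)|\le C(n,k,\Lambda_0)t^{-k/2}$ (conclusion (b)) on $[0,T_{\max})$. Since all derivatives remain bounded on compact time subintervals, the solution extends smoothly past $T_{\max}$ unless $T_{\max}=+\infty$; hence $T_{\max}=+\infty$, (a)--(c) hold for all $t>0$, and $g(t)\to g_0$ in $W^{1,n}_{\loc}$ as $t\to0$ comes from Theorem~\ref{thm-existence}(f) (all the rescaled solutions agreeing by uniqueness).

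For the asymptotics, given any $t_i\to+\infty$, the bounds (a) and (b) make $\{g(t_i)\}$ uniformly bounded in $C^{k}_{\loc}(M)$ for every $k$, so after passing to a subsequence $g(t_i)\to g_\infty$ in $C^{\infty}_{\loc}$ with $(C_n\Lambda_0)^{-1}h\le g_\infty\le C_n\Lambda_0 h$. Since $|\tinabla^{k}g(t_i)|\le C(n,k,\Lambda_0)t_i^{-k/2}\to0$ for $k\ge1$, the limit satisfies $\tinabla g_\infty\equiv0$; a positive-definite $\nabla^{h}$-parallel symmetric $2$-tensor has the same Levi-Civita connection as $h$, hence the same (vanishing) curvature, so $g_\infty$ is flat.

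\emph{The main obstacle} is the claim in the continuity argument that the bi-Lipschitz constant $\Upsilon_n\Lambda_0$ persists on the \emph{unbounded} interval $[0,\infty)$. A naive iteration of Lemma~\ref{borderline-trace} would multiply the constant by $\Upsilon_n$ at each restart and thus destroy estimate (a); flatness is precisely what removes this difficulty, since the absence of the zeroth-order term in the trace evolution, together with exact scale invariance, converts the ``short-time-with-a-definite-time'' estimate into a time-uniform one. The remaining delicate point is the bookkeeping: choosing the dimensional constant $\Upsilon_n$ large enough, and $\e_0(n,\Lambda_0)$ small enough, that all the Sobolev/energy absorptions and the trace-comparison inequality close simultaneously and independently of $t$.
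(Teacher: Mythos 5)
There is a genuine gap in the long-time existence step. Your continuity argument defines $T_{\max}$ by requiring the scale-invariant concentration to stay below the fixed thresholds $\min\{\delta_1,\delta_2\}$ \emph{at every scale} on $[0,T_{\max})$, and then claims this invariant is improved because of the estimate
\begin{equation*}
\left(r^{n}\fint_{B_h(x,r)}|\tinabla g(t)|^{n}\,d\vol_h\right)^{1/n}\le C_n\e_0+C_0\,(r^{-2}t)^{1/n}.
\end{equation*}
But this bound does not return the invariant: at any fixed scale $r$ it exceeds every fixed threshold as soon as $t\gtrsim r^{2}$ (already at $r\sim\sqrt{t}$ it only gives $C_n\e_0+C_0$, which is not small), so the claim ``the concentration stays strictly below the thresholds provided $\e_0$ is small'' fails for scales $r\lesssim\sqrt{t}$, the open/closed argument does not close, and what you actually obtain is another short-time statement (up to $t\sim\delta^{n}$), not $T_{\max}=+\infty$. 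The same issue infects the time-uniform trace bound: the scale-$r$ form of Lemma~\ref{borderline-C1} keeps the concentration small only for $t\lesssim r^{2}$ (in fact one must take $r\gg\sqrt{t}$ to keep the linear term small), so the hypotheses of Lemma~\ref{borderline-trace} at a fixed scale cannot be verified on an unbounded time interval as you assert; a repair would have to track, for each time $t$, only scales comparable to or larger than $\sqrt{t}$, which is precisely the bookkeeping you defer.

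The paper sidesteps all of this with the scaling-plus-uniqueness device that you invoke only for the $t\to0$ statement: for each $i\in\mathbb{N}$ apply Theorem~\ref{thm-existence} to $(i^{-2}g_0,\,i^{-2}h)$ --- hypothesis (ii) at radius $r=i$ is exactly the scale-one smallness hypothesis for the rescaled data, and flat $h$ satisfies $(\star)$ at every scale --- to get a solution on $(0,T]$ in rescaled time, i.e.\ on $(0,i^{2}T]$ after scaling back; Theorem~\ref{thm-unique} identifies all these solutions, so letting $i\to\infty$ gives a single flow on $(0,+\infty)$. Conclusions (a) and (b) are scale-invariant and hence inherited directly, and (c) is just estimate (b) of Theorem~\ref{thm-existence} for the $i$-th rescaled flow rewritten in the original variables; no smallness needs to be propagated over unbounded time intervals. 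Your treatment of the $t\to0$ limit and of the sub-convergence as $t\to+\infty$ (the $C^{k}$ bounds force $\tinabla g_\infty\equiv0$, hence a flat limit) is fine, but the core of the theorem --- existence for all $t>0$ with the stated uniform bounds --- is not established by the argument as written.
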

\begin{proof}
Theorem \ref{thm-existence} guarantees the short-time existence of the solution $g(t)$ to the Ricci-Deturck $h$-flow. We will show $g(t)$ exists on $(0,+\infty)$ and satisfies (a), (b) and (c).

For any $i\in\mathbb{N}$, consider the rescaled metric $g_{i,0}=i^{-2}g_0$ and $h_i=i^{-2}h$.  By Theorem~\ref{thm-existence}, there is a short-time solution $g_i(t)$ to the Ricci-Deturck $h_i$-flow on $M\times (0,T]$ with
 \begin{enumerate}
\item[(a')] $(C_n\Lambda_0)^{-1}h_i \leq g_i(t)\leq C_n\Lambda_0 h_i$ for some $C_n>1$;
\item[(b')] For all $k\in \mathbb{N}$ and $t\in (0,T]$,
$$\sup_M |\nabla^{h_i,k}g_i(t)|_{h_i}^2\leq \frac{C(n,k,\Lambda_0)}{t^k};$$
\item[(c')] $g_i(t)\to g_{i,0}$ in $W^{1,n}_{\loc}$  as $t\to 0$.
\end{enumerate}
Rescale the flow $g_i(t)$ back to $\hat{g}_{i}(t)=i^2g_i(i^{-2}t)$, which is defined on $(0,i^2T]$. Since the solution is unique by Theorem~\ref{thm-unique}, $g(t)=\hat g_i(t)$ for any $i\in\mathbb{N}$ and hence $g(t)$ exists for all $t>0$. Then (a) and (b) follow from (a') and (b') since the derivatives are scaling invariant. To establish (c), we apply Theorem \ref{thm-existence} the rescaled flow $g_i(t)$ and obtain
\begin{equation}
\begin{split}
\left(i^n \fint_{B_{h}(x,i)} |\tilde\nabla g(i^2t)|^n \, d\mathrm{vol}_{h}\right)^{1/n}
= {} & \left( \fint_{B_{h_i}(x,1)} |\nabla^{h_i}g_i(t)|^n \, d\mathrm{vol}_{h_i}\right)^{1/n}\\[2mm]
\leq {} & C_n\e_0 +C_0t^{1/n}
\end{split}
\end{equation}
for each $x\in M$, $t\in (0,T]$ and $i\in\mathbb{N}$. Relabelling the parameter and using the volume comparison theorem, we obtain (c). The convergence follows from the Arzel\`a-Ascoli theorem and the derivatives estimates.
\end{proof}

\begin{rem}
If $(M,h)=(\mathbb{R}^n,g_{\mathrm{flat}})$ is the standard Euclidean space, then the assumption (ii) is equivalent to the smallness of global gradient concentration: $ \int_{M} |\tilde\nabla g|^n \, d\mathrm{vol}_h<\omega_{n}\e_{0}^{n}$, where $\omega_{n}$ denotes the volume of unit ball in $\mathbb{R}^{n}$.
\end{rem}

Inspired by the works of \cite{SSS2008}, we now consider the behaviour of $g(t)$ as $t\to +\infty$ if in addition $g_0$ is sufficiently close to $h$ at infinity.
\begin{thm}
Under the assumption of Theorem~\ref{Thm:LT}, if in addition $(M,h)=(\mathbb{R}^n,g_{\mathrm{flat}})$ is the standard Euclidean space and $|g_0-h|\in L^2(\mathbb{R}^n,g_{\mathrm{flat}})$, then $g(t)\to g_{\mathrm{flat}}$ in $C^\infty_{\loc}$ as $t\to +\infty$.
\end{thm}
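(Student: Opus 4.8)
\emph{Overall plan.} Since $h=g_{\mathrm{flat}}$ is flat, the Ricci-Deturck $h$-flow equation \eqref{eqn: h-flow} loses its zeroth order curvature term ($\tilde R\equiv 0$), and the point of the $L^2$ hypothesis is that it should force the flow to remember the asymptotics $g(\infty)=h$. Writing $v(t)=g(t)-h$, the strategy is: (Step 1) prove the \emph{uniform} $L^2$ bound $\sup_{t>0}\|v(t)\|_{L^2(\mathbb{R}^n)}\leq\|g_0-h\|_{L^2(\mathbb{R}^n)}$; (Step 2) combine it with the derivative decay of Theorem~\ref{Thm:LT}(b) to show every $C^\infty_{\loc}$ subsequential limit of $g(t)$ as $t\to+\infty$ equals $g_{\mathrm{flat}}$, whence the convergence.

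\emph{Step 1: the uniform $L^2$ estimate.} Because $(\star)$ is scale invariant on $(\mathbb{R}^n,g_{\mathrm{flat}})$, I would first note that letting $r\to+\infty$ in Theorem~\ref{Thm:LT}(c) and using the exact Euclidean volume growth gives $\|\tilde\nabla g(t)\|_{L^n(\mathbb{R}^n)}\leq C_n\e_0$ for all $t>0$; moreover the approximating metrics $g_{i,0}$ constructed in the proof of Theorem~\ref{thm-existence} inherit global bounds, since $|g_0-h|\in L^2$ makes the gluing term in $g_{i,0}=\phi(i^{-1}\rho)g_0+(1-\phi(i^{-1}\rho))h$ negligible in $L^n$ (its $L^n$ mass is $O(i^{-n})$ times the $L^2$-tail of $g_0-h$), so $\|\tilde\nabla g_{i,0}\|_{L^n(\mathbb{R}^n)}\leq C_n\e_0$ for $i$ large and $\|g_{i,0}-h\|_{L^2(\mathbb{R}^n)}\to\|g_0-h\|_{L^2(\mathbb{R}^n)}$. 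In particular each $g_{i,0}$ satisfies the hypotheses of Theorem~\ref{Thm:LT} at every scale, hence produces a global $h$-flow $g_i(t)$ with $\|\tilde\nabla g_i(t)\|_{L^n(\mathbb{R}^n)}\leq C_n\e_0$ for all $t>0$, with $g_i(t)\to g(t)$ locally smoothly on $M\times(0,+\infty)$ by the uniform estimates and forward uniqueness (Theorem~\ref{thm-unique}), and with $v_i(t):=g_i(t)-h\in L^2(\mathbb{R}^n)$ for each $t$ since $g_{i,0}$ equals $h$ outside a compact set and parabolic estimates propagate this spatial decay. As $\tilde R\equiv0$ the flow equation is $\partial_t v_{ij}=g^{pq}\tilde\nabla_p\tilde\nabla_q v_{ij}+g^{-1}*g^{-1}*\tilde\nabla v*\tilde\nabla v$, so a standard integration by parts yields
\begin{equation*}
\frac{d}{dt}\int_{\mathbb{R}^n}|v_i|^2\,d\mathrm{vol}_h\leq -2\Lambda^{-1}\int_{\mathbb{R}^n}|\tilde\nabla v_i|^2\,d\mathrm{vol}_h+C\int_{\mathbb{R}^n}|v_i|\,|\tilde\nabla v_i|^2\,d\mathrm{vol}_h,
\end{equation*}
while by H\"older with exponents $\bigl(\tfrac{2n}{n-2},n,2\bigr)$ and the Sobolev inequality on $\mathbb{R}^n$,
\begin{equation*}
\int_{\mathbb{R}^n}|v_i|\,|\tilde\nabla v_i|^2\,d\mathrm{vol}_h\leq\|v_i\|_{L^{\frac{2n}{n-2}}}\,\|\tilde\nabla v_i\|_{L^n}\,\|\tilde\nabla v_i\|_{L^2}\leq C\e_0\,\|\tilde\nabla v_i\|_{L^2}^2
\end{equation*}
with $C$ dimensional. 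After possibly shrinking $\e_0=\e_0(n,\Lambda_0)$ the right-hand side of the first display is $\leq 0$, so $t\mapsto\|v_i(t)\|_{L^2(\mathbb{R}^n)}^2$ is non-increasing; combined with $v_i(t)\to g_{i,0}-h$ in $L^2(\mathbb{R}^n)$ as $t\to0^+$ (parabolic continuity with compactly supported initial data) this gives $\|v_i(t)\|_{L^2(\mathbb{R}^n)}^2\leq\|g_{i,0}-h\|_{L^2(\mathbb{R}^n)}^2$ for all $t>0$. Letting $i\to+\infty$ (Fatou on $B_h(0,R)$, then $R\to+\infty$) yields $\|g(t)-h\|_{L^2(\mathbb{R}^n)}^2\leq\|g_0-h\|_{L^2(\mathbb{R}^n)}^2<+\infty$ for every $t>0$.

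\emph{Step 2: identifying the limit and concluding.} Given $t_i\to+\infty$, Theorem~\ref{Thm:LT}(a),(b) makes $\{g(t_i)\}$ uniformly bi-Lipschitz to $h$ with uniformly bounded covariant derivatives of all orders on $[1,+\infty)$, so after passing to a subsequence $g(t_i)\to g_\infty$ in $C^\infty_{\loc}(\mathbb{R}^n)$ with $\Lambda^{-1}h\leq g_\infty\leq\Lambda h$. For $k\geq1$, $|\tilde\nabla^k g(t_i)|\leq C_k t_i^{-k/2}\to0$, hence $\tilde\nabla^k g_\infty\equiv0$ and $g_\infty$ is a constant-coefficient Euclidean metric. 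If $g_\infty\neq h$ then $c:=|g_\infty-h|_h>0$ is a positive constant and $C^0_{\loc}$ convergence gives $|g(t_i)-h|_h\geq c/2$ on $B_h(0,R)$ for all large $i$, so $\|g(t_i)-h\|_{L^2(\mathbb{R}^n)}^2\geq(c/2)^2\,\mathrm{vol}_h(B_h(0,R))\to+\infty$ as $R\to+\infty$, contradicting Step 1; hence $g_\infty=h$. Since every $C^\infty_{\loc}$ subsequential limit of the precompact family $\{g(t)\}_{t\geq1}$ equals $h$, we conclude $g(t)\to g_{\mathrm{flat}}$ in $C^\infty_{\loc}$ as $t\to+\infty$.

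\emph{Main obstacle.} The delicate part is Step 1: ensuring that the approximate flows $v_i(t)$ genuinely lie in $L^2(\mathbb{R}^n)$ for positive times and depend $L^2$-continuously on $t$ at $t=0$, and that the $W^{1,n}$-smallness is preserved \emph{globally} (not just on unit balls) so the Sobolev term can be absorbed. Both rely on the flatness of $h$ — which deletes the inhomogeneous curvature terms from \eqref{eqn: h-flow} and from the evolution of $|\tilde\nabla g|$, and makes $(\star)$ scale invariant so that Theorem~\ref{Thm:LT} applies verbatim to each $g_{i,0}$ — and on $|g_0-h|\in L^2$, which is exactly what makes the cut-off construction compatible with finite $L^2$ energy. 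Once the uniform $L^2$ bound is in hand, Step 2 is routine.
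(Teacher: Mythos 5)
Your overall strategy coincides with the paper's: use the flatness of $h$ to derive an $L^2$ energy inequality for $\beta(t)=g(t)-h$, absorb the quadratic term $\int|\beta|\,|\tilde\nabla\beta|^2$ by combining the global bound $\|\tilde\nabla g(t)\|_{L^n(\mathbb{R}^n)}\leq C_n\e_0$ (Theorem~\ref{Thm:LT}(c) with $r\to+\infty$) with the Sobolev inequality, and then identify every constant-coefficient subsequential limit with $h$ because a nonzero constant difference is incompatible with a uniform finite $L^2$ bound; your Step 2 is exactly the paper's closing argument. The genuine gap is in how you justify the global energy inequality in Step 1. You integrate by parts over all of $\mathbb{R}^n$ for the approximating flows $g_i(t)$, which requires (a) that $v_i(t)=g_i(t)-h$ lies in $L^2(\mathbb{R}^n)$ for $t>0$ with enough spatial decay of $v_i$ and $\tilde\nabla v_i$ to discard boundary terms, and (b) that $v_i(t)\to g_{i,0}-h$ in $L^2(\mathbb{R}^n)$ as $t\to0^+$. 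Both are asserted via ``parabolic estimates propagate this spatial decay,'' but no such statement is available off the shelf in the framework of the paper for this quasilinear system: the interior estimates give only $|\tilde\nabla g_i(t)|\leq Ct^{-1/2}$ with no spatial decay, so a barrier or decay-propagation argument would itself have to be supplied, and that is precisely the technical content being skipped.

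The way the paper closes this (and the natural fix for your write-up) is to localize rather than to seek decay: work directly with $g(t)$ and the truncated energy $E_R(t)=\int_M|\beta|^2\eta_R^2\,d\mathrm{vol}_h$ with $\eta_R=\phi^{n}(\rho/R)$. Then the only new term beyond your two is $\int|\tilde\nabla\eta_R|^2|\beta|^2$, which is controlled by $C R^{-1}E_R^{1-1/n}$ using $|\beta|\leq C$, the choice of the power $\phi^{n}$, H\"older, and Euclidean volume growth; integrating the resulting differential inequality for $E_R^{1/n}$ on $[s,t]$ and letting $s\to0$ (using $W^{1,n}_{\loc}$, hence $L^2_{\loc}$, attainment of the initial data) and then $R\to+\infty$ yields $\|\beta(t)\|_{L^2(\mathbb{R}^n)}\leq\|\beta(0)\|_{L^2(\mathbb{R}^n)}$ with no decay hypotheses. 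This also removes the detour through the glued metrics $g_{i,0}$ and the exchange of limits in $i$, which your version needs but which are not required for the statement. With that replacement your Step 1 becomes rigorous and the remainder of your argument goes through as written.
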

\begin{proof}
Consider the difference $\b(t)=g(t)-h$ which satisfies
\begin{equation}
\begin{split}
\frac{\partial }{\partial t}\b_{ij}&= g^{pq}\tilde\nabla_p \tilde\nabla_q \b_{ij}+ g^{-1}*g^{-1}* \tilde \nabla \b* \tilde \nabla \b.
\end{split}
\end{equation}
since $\tilde\nabla h=0$.   Let $\phi$ be a cut-off function on $[0,+\infty)$ such that $\rho\equiv1$ on $[0,1]$, $\rho\equiv0$ on $[2,+\infty]$ and $|\phi'|\leq10$, and $\rho$ be a function on $M=\mathbb{R}^n$ which is equivalent to the distance function from a fixed point $p\in M=\mathbb{R}^n$ and satisfies $\|\rho\|_{C^2(M,h)}\leq 1$. Define $\eta_R=\phi^{n}\left(\frac{\rho}{R} \right)$ and consider the energy
$$E_R(t)=\int_M |\b|^2 \eta^2_R\, d\mathrm{vol}_h.$$
Then our assumption is equivalent to say that there is $E_0>0$ so that $E_R(0)\leq E_0$ for all $R>1$.  We will use $C_i$ to denote constants depending only on $n$ and $\Lambda_{0}$.

Differentiating $E_R$ with respect to $t$ and integrating by parts yields
\begin{equation}\label{E R t estimate}
\begin{split}
\frac{d}{dt}E_R(t)&= \int_M \left(2\langle \b, g^{pq}\tilde\nabla_p \tilde\nabla_q \b \rangle + g^{-1}*g^{-1}* \tilde \nabla \b* \tilde \nabla \b*\b \right) \eta^2_R \, d\mathrm{vol}_h\\
&\leq \int_M \left( -C_{1}^{-1}\eta_R^2|\tilde \nabla \b|^2 +C_1|\b| |\tilde\nabla \b|^2 \eta^2_R+C_1|\tilde\nabla \eta_R|^2 |\b|^2\right)d\mathrm{vol}_h\\
&\leq \int_M \left(-C_{2}^{-1}|\tilde \nabla (\eta_R\b)|^2 +C_2|\b|^2 |\tilde\nabla \b|^2 \eta^2_R+C_2|\tilde\nabla \eta_R|^2 |\b|^2\right)d\mathrm{vol}_h\\[2mm]
&=-\mathbf{I}+\mathbf{II}+\mathbf{III}.
\end{split}
\end{equation}
We squeeze more negativity from $\mathbf{I}$ by using the Sobolev inequality \eqref{Sobo-ineq} and Kato's inequality:
\begin{equation}
\mathbf{I}\geq C_{3}^{-1} \left(\int_M |\b \eta_R|^\frac{2n}{n-2}\, d\mathrm{vol}_h \right)^\frac{n-2}{n},
\end{equation}
while $\mathbf{II}$ can be estimated using (c) in  Theorem~\ref{Thm:LT} (with $r=+\infty$):
\begin{equation}
\begin{split}
\int_M |\b|^2 |\tilde \nabla \b|^2 \eta_R^2 \,d\mathrm{vol}_h
&\leq  \left(\int_M |\b \eta_R|^\frac{2n}{n-2}\, d\mathrm{vol}_h \right)^\frac{n-2}{n} \left(\int_M |\tilde\nabla g|^{n}\,d\mathrm{vol}_h \right)^{1/n}\\
&\leq C_n\e_0\left(\int_M |\b \eta_R|^\frac{2n}{n-2}\, d\mathrm{vol}_h \right)^\frac{n-2}{n}.
\end{split}
\end{equation}
Therefore, if $\e_0$ in the assumption is sufficiently small, $-\mathbf{I}+\mathbf{II}\leq 0$ for any $R>1$.

On the other hand, since $g(t)$ is uniformly equivalent to $h$, then $|\beta|\leq C$. Combining this with $\eta_{R}=\phi^{n}\left(\frac{\rho}{R}\right)$,
\begin{equation}\label{III estimate}
\begin{split}
\mathbf{III}&\leq \frac{C_{4}n^2}{R^2}\int_M  |\b|^2 \eta_R^{2-\frac2n} \ d\mathrm{vol}_h\\
&\leq \frac{C_{5}}{R^2}\int_M  |\b|^{2-\frac2n} \eta_R^{2-\frac2n} \ d\mathrm{vol}_h\\
&\leq \frac{C_{5}}{R^{2}} E_R^{1-\frac1{n}}|B_{h}(x,2R)|^{\frac{1}{n}} \\
&\leq \frac{C_{6}}{R} E_R^{1-\frac1{n}}.
\end{split}
\end{equation}
Here we used Young's inequality and the volume growth of Euclidean space. Substituting $-\mathbf{I}+\mathbf{II}\leq 0$ and \eqref{III estimate} into \eqref{E R t estimate},
\begin{equation}
\frac{d}{dt}E_R(t) \leq \frac{C_{6}}{R} E_R^{1-\frac1{n}}.
\end{equation}
For any $\sigma>0$, the above shows
\begin{equation}
\frac{d}{dt}(E_R(t)+\sigma)^{\frac{1}{n}} = \frac{C_{6}}{nR}(E_R(t)+\sigma)^{\frac{1}{n}-1} E_R^{1-\frac1{n}} \leq \frac{C_{6}}{nR}.
\end{equation}
Integrating on $[s,t]$ and letting $\sigma\to0$,
\begin{equation}
E^{\frac{1}{n}}_R(t) \leq E^{\frac{1}{n}}_R(s)+\frac{C_{6}(t-s)}{nR}.
\end{equation}
Letting $s\to0$ and followed by $R\to+\infty$, we see that for all $t>0$,
\begin{equation}\label{L2-longtime}
\int_M |\b(t)|^2\,d\mathrm{vol}_h\leq  \int_M |\b(0)|^2\,d\mathrm{vol}_h.
\end{equation}
By the Arzel\`a-Ascoli theorem and the derivatives estimates, passing to a subsequence, $g(t)$ converges to some flat metric $\hat h$ in $C^\infty_{\loc}$. By Lebesgue's dominated convergence theorem, \eqref{L2-longtime} implies
\begin{equation}
\int_M |h-\hat{h}|^2\,d\mathrm{vol}_h < +\infty.
\end{equation}
Since $(M,h)$ is the standard Euclidean space, then $\hat h\equiv h$ on $M$. Hence, the above argument shows that $g(t)\to h$ as $t\to +\infty$ in $C^\infty_{\loc}$ without passing to subsequence. This completes the proof.
\end{proof}

\section{Applications on almost rigidity problems}\label{Sec:almostRid}

In this section, we will use the Ricci-Deturck $h$-flow to study scalar curvature compactness problems under assumptions of uniformly bi-Lipschitz and $W^{1,n}_{\loc}$ smallness. This is motivated by the torus stability problems, see \cite{Gromov2014,Sormani2021}. We begin with recalling the notion of Yamabe invariant $\sigma(M)$ which is given by
\begin{equation}
\sigma(M)=\sup\left\{ \mathcal{Y}(M,[g]): [g] \textit{ is a conformal class of metrics on } M\right\},
\end{equation}
where
\begin{equation}
\mathcal{Y}(M,[g_0])=\inf\left\{ \int_M R(g)\;d\mu_g: g\in[g_0],\; \mathrm{Vol}(M,g)=1\right\}.
\end{equation}

It is well known that if a smooth metric on a
compact manifold attains the Yamabe invariant $\sigma(M)$ and if $\sigma(M)\leq 0$, then the metric must be Einstein \cite{Schoen1989}.  The basic model of manifolds with $\sigma(M)=0$ is the standard torus $\mathbb{T}^n$ which was shown by Schoen-Yau \cite{SchoenYau1979,SchoenYau1979-2} when $n\leq 7$ and Gromov-Lawson \cite{GromovLawson1980} in general dimension.

We use the geometric flow smoothing to establish an integral stability on manifolds with non-positive Yamabe invariant, which in particular is applicable to the torus.  This is in spirit similar to that in \cite{LeeNaberNeumayer2020}.

\begin{thm}\label{thm:stability-sigma}
Let $M$ be a compact manifold with $\sigma(M)\leq 0$ with a background metric $h$ satisfying $(\star)$.  For any $\Lambda_0>1$, there is $\e_0(n,\Lambda_0)>0$ such that the following  holds. Suppose $g_{i,0}$ is a sequence of Riemannian metric on $M$ such that
\begin{enumerate}
\item[(i)] $\Lambda_0^{-1}h\leq g_{i,0}\leq \Lambda_0 h$ on $M$;
\item[(ii)] $\displaystyle \left( \fint_{B_h(x,1)} |\tilde \nabla g_{i,0}|^n \, d\mathrm{vol}_h \right)^{1/n} \leq \e_0$ for all $x\in M$;
\item[(iii)] $\mathcal{R}(g_{i,0})\geq -i^{-1}$ on $M$.
\end{enumerate}
Then there is a Ricci-flat metric $\hat h$ on $M$ so that after passing to subsequence, $g_{i,0}$ converge to $\hat h$ in $ L^p$ for all $p>0$ as $i\to +\infty$, modulo diffeomorphism.
\end{thm}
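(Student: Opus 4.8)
The plan is to regularize each $g_{i,0}$ by the Ricci-Deturck $h$-flow, pass to a limiting flow, exploit $\sigma(M)\le 0$ to force the limit to be Ricci-flat, and read off $L^p$ convergence from the time-zero estimates. Choose $\e_0=\e_0(n,\Lambda_0)$ small enough that Theorem~\ref{thm-existence} and all the cited a-priori lemmas apply; for each $i$ we obtain a smooth Ricci-Deturck $h$-flow $g_i(t)$ on $M\times(0,T]$ with $g_i(t)\to g_{i,0}$ as $t\to 0$ (in $C^\infty(M)$ if $g_{i,0}$ is smooth, by property (e) and compactness of $M$; in $W^{1,n}$ in general), uniformly bi-Lipschitz to $h$, with $\sup_M t^{k/2}|\tilde\nabla^k g_i(t)|\le C(k,n,\Lambda_0)$ and $\big(\fint_{B_h(x,1)}|\tilde\nabla g_i(t)|^n\big)^{1/n}\le C_n\e_0+C_0t^{1/n}$. \textbf{Scalar curvature persistence.} On any $[\tau,T]$ the $h$-flow is related to a Ricci flow by a smooth diffeomorphism, and since scalar curvature is a diffeomorphism invariant, Hamilton's maximum principle (monotonicity of $\min_M\mathcal{R}$ along the Ricci flow) gives $\mathcal{R}(g_i(t))\ge -i^{-1}$ for all $t\in(0,T]$; in the smooth case one lets $\tau\to 0$ using $g_i(\tau)\to g_{i,0}$ in $C^\infty(M)$, and in the rough case one reads (iii) as $\liminf_{t\to 0}\min_M\mathcal{R}(g_i(t))\ge -i^{-1}$, which is the $h$-flow-invariant notion.

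\textbf{Limiting flow and rigidity.} The uniform $C^k_{\loc}$ bounds on $M\times[\tau,T]$ and the uniform bi-Lipschitz bounds let us pass, after a subsequence and a diagonal argument over $\tau\downarrow 0$, to a smooth Ricci-Deturck $h$-flow $g_\infty(t)=\lim_i g_i(t)$ on $M\times(0,T]$ satisfying (a)--(c) of Theorem~\ref{thm-existence}; by continuity of $\mathcal{R}$ in the $C^2$ topology, $\mathcal{R}(g_\infty(t))\ge 0$ for every $t\in(0,T]$. Fix such a $t$. Since $\mathcal{R}(g_\infty(t))\ge 0$, the Yamabe quotient of $g_\infty(t)$ is nonnegative, so $\mathcal{Y}(M,[g_\infty(t)])\ge 0$; together with $\mathcal{Y}(M,[g_\infty(t)])\le\sigma(M)\le 0$ this gives $\mathcal{Y}(M,[g_\infty(t)])=0$, and as $\mathcal{R}(g_\infty(t))\ge 0$ cannot be nonzero without making the first eigenvalue of the conformal Laplacian strictly positive (hence $\mathcal{Y}>0$), we conclude $\mathcal{R}(g_\infty(t))\equiv 0$ for every $t$ (cf.\ \cite{Schoen1989}). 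Feeding $\mathcal{R}(g_\infty(t))\equiv 0$ into the evolution of scalar curvature along the $h$-flow, $0=\partial_t\mathcal{R}=\Delta\mathcal{R}+2|\Ric|^2+\langle W,\nabla\mathcal{R}\rangle=2|\Ric(g_\infty(t))|^2$, so $g_\infty(t)$ is Ricci-flat for every $t\in(0,T]$; the $h$-flow equation then reduces to $\partial_t g_\infty=\mathcal{L}_W g_\infty$, so the $g_\infty(t)$ differ only by diffeomorphisms and are all isometric to one smooth Ricci-flat metric $\hat h$, say $g_\infty(t)=\Psi_t^*\hat h$.

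\textbf{$L^p$ convergence modulo diffeomorphism.} Proposition~\ref{timezero-regularity-W1n} applied to $g_\infty$ gives $\|g_\infty(t)-g_\infty(s)\|_{L^p(M,h)}\le C(n,\Lambda_0,p)|t-s|$, so $g_\infty(t)$ is $L^p$-Cauchy as $t\to 0$; applied to each $g_i$ it gives $\|g_{i,0}-g_i(t)\|_{L^p(M,h)}\le Ct$ uniformly in $i$, and since $g_i(t)\to g_\infty(t)$ smoothly on $M$ for fixed $t$, $\limsup_i\|g_{i,0}-g_\infty(t)\|_{L^p(M,h)}\le Ct$. A diagonal choice $t_i\downarrow 0$ then yields $\|g_{i,0}-\Psi_{t_i}^*\hat h\|_{L^p(M,h)}\to 0$; because $\Psi_{t_i}^*\hat h=g_\infty(t_i)$ is uniformly bi-Lipschitz to $h$, the norms measured by $h$ and by $(\Psi_{t_i}^{-1})^*h$ are comparable, so pulling back by $\Phi_i:=\Psi_{t_i}^{-1}$ gives $\|\Phi_i^*g_{i,0}-\hat h\|_{L^p(M,\hat h)}\to 0$ for all $p>0$, which is the assertion.

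I expect the main obstacle to be the scalar curvature persistence step: for smooth $g_{i,0}$ it is Hamilton's maximum principle combined with the diffeomorphism equivalence between the $h$-flow and the Ricci flow, but securing it for genuinely rough $g_{i,0}$ requires the correct weak notion of scalar-curvature lower bound that is preserved by the $h$-flow (the topic of the later sections on scalar-curvature persistence). A secondary technical point is the bookkeeping of the DeTurck diffeomorphisms $\Psi_t$ in the last step, needed so that the $L^p$ limit is identified with an honest smooth Ricci-flat metric rather than merely an $L^\infty$ one.
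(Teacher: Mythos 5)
Your proposal is correct and follows essentially the same route as the paper: smooth each $g_{i,0}$ by the Ricci--Deturck $h$-flow of Theorem~\ref{thm-existence}, use preservation of the scalar curvature lower bound and $\sigma(M)\leq 0$ to force the limit flow to be Ricci-flat, observe that the limit flow then moves only by the DeTurck diffeomorphisms with uniformly controlled bi-Lipschitz bounds, and conclude $L^p$ convergence modulo diffeomorphism from the time-zero estimate of Proposition~\ref{timezero-regularity-W1n}. The only (harmless) deviation is in the final bookkeeping, where you use a diagonal choice $t_i\downarrow 0$ in place of the paper's Rellich--Kondrachov extraction of an $L^p$ limit of $g_{i,0}$.
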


\begin{proof}
By Theorem~\ref{thm-existence}, there exists a sequence of Ricci-Deturck $h$-flow $g_i(t)$ on $M\times [0,T]$ starting from $g_{i,0}$ such that
\begin{enumerate}
\item[(a)] $\Lambda_{0}^{-1}h\leq g_i(t)\leq \Lambda_{0} h$;
\item[(b)] For any $k\in \mathbb{N}$, there is $C(k,n,\Lambda_0)>0$ such that for all $t\in (0,T]$,
\begin{equation*}
\sup_M |\tilde\nabla^k g_i(t)|\leq C(k,n,\Lambda_0) t^{-k/2};
\end{equation*}
\item[(c)] $\displaystyle \left(\fint_{B_h(x,1)} |\tilde \nabla g_i(t)|^n \,d\mathrm{vol}_h \right)^{1/n}\leq C_0\e_0$ for all $(x,t)\in M\times [0,T]$.
\end{enumerate}
Moreover, it is well-known that the Ricci-Deturck flow preserves the scalar curvature lower bound and hence for all $t\in [0,T]$ and $i\in \mathbb{N}$,
\begin{equation}
\mathcal{R}(g_i(t))\geq -i^{-1}.
\end{equation}
Therefore, using diagonal subsequence argument and the Arzel\`a-Ascoli theorem, after passing to subsequence, $g_i(t)\to g(t)$ on $M\times (0,T]$ in $C^\infty_{\loc}$ with
\begin{enumerate}
\item[(i')] $\Lambda^{-1}h\leq g(t)\leq \Lambda h$;
\item[(ii')] For any $k\in \mathbb{N}$, there is $C(k,n,\Lambda_0)>0$ such that for all $t\in (0,T]$,
\begin{equation*}
\sup_M |\tilde\nabla^k g(t)|\leq C(k,n,\Lambda_0) t^{-k/2};
\end{equation*}
\item[(iii')]$\mathcal{R}(g(t))\geq 0$;
\item[(iv')] $\displaystyle \left(\fint_{B_h(x,1)} |\tilde \nabla g(t)|^n \,d\mathrm{vol}_h \right)^{1/n}\leq C_0\e_0$ for all $(x,t)\in M\times (0,T]$.
\end{enumerate}
The standard rigidity shows that $g(t)$ is Ricci-flat for all $t\in (0,T]$.  By \eqref{eqn: h-flow-Ricciform},
\begin{equation}
\left\{\begin{array}{ll}
\partial_t g_{ij}=\nabla_i W_j +\nabla_jW_i;\\[1mm]
W^k=g^{pq}\left(  \Gamma_{pq}^k-\tilde\Gamma_{pq}^k\right)
\end{array}
\right.
\end{equation}
so that $g(t)=\Psi_t^* g(T)$ where $\Psi_t$ is a diffeomorphism of $M$ satisfying
\begin{equation}
\left\{
\begin{array}{ll}
\partial_t\Psi_t(x)=W(\Psi_t(x),t);\\[1mm]
\Psi_T(x)=x
\end{array}
\right.
\end{equation}
for all $(x,t)\in M\times (0,T]$. By (i'), $\Psi_{t}$ and $\Psi_{t}^{-1}$ are uniformly $C^{1}$ bounded (independent of $t$). Using (iv') and Rellich-Kondrachov theorem, passing to a subsequence, we may assume $g_{i,0}$ converges to some $g_\infty$ in $L^p$ for all $p>0$. By applying Proposition~\ref{timezero-regularity-W1n} on $g_i(t)$ and followed by letting $i\to +\infty$,  a covering argument show that
\begin{equation}
\int_M |\Psi_t^*g(T)-g_{\infty}|^p \, d\mathrm{vol}_h \leq L_{p}t
\end{equation}
for some $L_{p}>0$ independent of $t$. Then
\begin{equation}
\begin{split}
& \int_{M}|(\Psi_{t}^{-1})^{*}g_{i,0}-g(T)|^{p}\,d\vol_{h} \\
\leq {} & C\int_{M}|(\Psi_{t}^{-1})^{*}g_{i,0}-(\Psi_{t}^{-1})^{*}g_{\infty}|^{p}\,d\vol_{h}
+C\int_{M}|(\Psi_{t}^{-1})^{*}g_{\infty}-g(T)|^{p}\,d\vol_{h} \\
\leq {} & C\int_{M}|g_{i,0}-g_{\infty}|^{p}\,d\vol_{h}
+C\int_M |\Psi_t^*g(T)-g_{\infty}|^{p} \, d\mathrm{vol}_h \\
\leq {} & C\int_{M}|g_{i,0}-g_{\infty}|^{p}\,d\vol_{h}+CL_{p}t \\
\end{split}
\end{equation}
for some $C>0$ independent of $t$ and $i$. It then follows that $(\Psi_{i^{-1}}^{-1})^{*}g_{i,0}$ converges to Ricci flat metric $g(T)$ in $L^{p}$ sense.
\end{proof}

\begin{rem}
By working on the diffeomorphic Ricci flow directly with the regularity estimates established in this work,  we believe that one can use the decomposition theorem in \cite{LeeNaberNeumayer2020} to obtain a similar result in term of $d_p$ topology which is more intrinsic in nature.
\end{rem}

\section{Application on singular metric with scalar lower bound}\label{Sec:singular}

In this section, we would like to use the Ricci-Deturck flow to study the following conjecture of Schoen concerning singular metrics with $\mathcal{R}\geq 0$ on manifolds with non-positive Yamabe invariant:
\begin{conj}[Conjecture 1.5 in \cite{LiMantoulidis2019}] \label{conj-1}
Let $M^n$ be a compact manifold with $\sigma(M)\leq 0$. Suppose $g$ is an $L^\infty$ metric on $M$ such that for some  for some smooth metric $h$ and $\Lambda>1$, $\Lambda^{-1}h\leq g\leq \Lambda h$ and is smooth away from a closed, embedded submanifold $\Sigma$ with co-dimension $\geq 3$ and satisfies $\mathcal{R}(g)\geq 0$ outside $\Sigma$, then $\Ric(g)=0$ and $g$ can be extended smoothly on $M$.
\end{conj}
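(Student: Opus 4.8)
The plan is to prove the conjecture under the mild additional hypothesis that $g\in W^{1,n}(M)$, under which the existence theory of Section~\ref{Sec:Existence} applies; the output of the argument will be $\Ric(g)=0$ on $M\setminus\Sigma$, and I will flag at the end why the full smooth extension across $\Sigma$ (and removal of the $W^{1,n}$ hypothesis) is the genuinely hard part. For $n=3$ the full statement is already known by the minimal surface method of Li--Mantoulidis \cite{LiMantoulidis2019}; the point here is a flow-based argument valid in all dimensions, in the spirit of \cite{LeeTam2021}.

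\textbf{Step 1 (scale down and start the flow).} Since $\Sigma$ has measure zero and $g\in L^\infty\cap W^{1,n}$, absolute continuity of the integral gives $\int_{B_h(x,r)}|\tilde\nabla g|^n\,d\vol_h\to 0$ uniformly in $x$ as $r\to 0$, exactly as in Proposition~\ref{scaling-assumption}. Replacing $h$ by $r^{-2}h$ and $g$ by $r^{-2}g$ for a small $r=r_{\e_0}$ --- which preserves the bi-Lipschitz bound and, after the mollification of Proposition~\ref{prop:RF-smoothing}, the hypothesis $(\star)$ --- we may assume $\big(\fint_{B_h(x,1)}|\tilde\nabla g|^n\,d\vol_h\big)^{1/n}\le\e_0$ for all $x\in M$. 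Theorem~\ref{thm-existence} (or Theorem~\ref{shorttime-compact}) then produces a smooth Ricci-Deturck $h$-flow $g(t)$ on $M\times(0,T]$ with $\Lambda^{-1}h\le g(t)\le\Lambda h$, $g(t)\to g$ in $W^{1,n}_{\loc}$ as $t\to 0$, and interior derivative bounds $\sup_M|\tilde\nabla^k g(t)|\le C_k t^{-k/2}$. Because $g$ is smooth on the \emph{open} set $M\setminus\Sigma$, Proposition~\ref{prop:local-smoothness-Ck} upgrades the convergence to $g(t)\to g$ in $C^\infty_{\loc}(M\setminus\Sigma)$.

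\textbf{Step 2 (propagate $\mathcal R\ge 0$).} The technical heart, following \cite{LeeTam2021}, is to show $\mathcal R(g(t))\ge 0$ for every $t\in(0,T]$. Off $\Sigma$ this is automatic in the limit from the $C^\infty_{\loc}$ convergence of Step 1; the danger is concentration of negative scalar curvature near $\Sigma$ for small $t$. Applied blindly, the maximum principle for the evolution $\partial_t\mathcal R=\Delta_{g(t)}\mathcal R+\langle W,\tilde\nabla\mathcal R\rangle+2|\Ric(g(t))|^2$ only yields $\mathcal R(g(t))\ge -C/t$. To kill the blow-up one uses the codimension hypothesis: when $\mathrm{codim}\,\Sigma\ge 3$ one has $\int_{\{d_h(\cdot,\Sigma)<\rho\}}d_h(\cdot,\Sigma)^{-2}\,d\vol_h\to 0$ as $\rho\to 0$, so there are cutoffs $\chi_\rho$ vanishing near $\Sigma$ with $\int_M|\tilde\nabla\chi_\rho|^2\,d\vol_h\to 0$; testing the evolution of the negative part $\mathcal R(g(t))^-$ against $\chi_\rho^2$, integrating by parts, and absorbing the errors via the interior bounds $|\tilde\nabla g(t)|,|W|\le Ct^{-1/2}$ and the Sobolev inequality \eqref{Sobo-ineq}, one shows $\int_M\mathcal R(g(t))^-\chi_\rho^2\,d\vol_h$ stays arbitrarily small; together with $\mathcal R(g(t))\to\mathcal R(g)\ge 0$ locally uniformly off $\Sigma$ this forces $\mathcal R(g(t))^-\equiv 0$. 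I expect this step --- coupling the near-$\Sigma$ integrability/capacity estimate to the parabolic argument so that the contribution there genuinely vanishes as $t\to 0$ before $\rho\to 0$ --- to be the main obstacle.

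\textbf{Step 3 (rigidity and conclusion).} For fixed $t\in(0,T]$, $g(t)$ is a smooth metric on the compact manifold $M$ with $\sigma(M)\le 0$ and $\mathcal R(g(t))\ge 0$. The classical conformal rigidity --- exactly the ``standard rigidity'' invoked in the proof of Theorem~\ref{thm:stability-sigma}: the conformal class then has Yamabe invariant $0$ with a nonnegative-scalar representative, forcing $\mathcal R(g(t))\equiv 0$, and since $M$ admits no positive scalar curvature metric, $\Ric(g(t))=0$. Letting $t\to 0$ and using the $C^\infty_{\loc}(M\setminus\Sigma)$ convergence of Step 1 gives $\Ric(g)=0$ on $M\setminus\Sigma$, which is the assertion of the conjecture on the regular part. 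What remains for the full conjecture is to show that a metric which is Ricci-flat on $M\setminus\Sigma$, bi-Lipschitz to a smooth metric, with $\mathrm{codim}\,\Sigma\ge 3$, extends smoothly across $\Sigma$; the flow supplies no information here, and one would need an independent $\varepsilon$-regularity / removable-singularity theorem for the Einstein equation --- possibly requiring a stronger codimension bound --- which, together with dropping the auxiliary $W^{1,n}$ assumption, is where the remaining difficulty lies.
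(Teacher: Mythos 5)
The statement you were asked to prove is Conjecture~\ref{conj-1}, which the paper itself does \emph{not} prove: its contribution is the partial result Theorem~\ref{rigidity-FollowingLEETAM}, namely Ricci-flatness on $M\setminus\Sigma$ under the extra hypothesis $g_0\in L^\infty\cap W^{1,n}$ (plus a smooth-extension statement when $\Sigma$ consists of isolated points). Your proposal is, in effect, a reconstruction of that partial result, and your Steps 1 and 3 follow the paper's route exactly: regularize via Proposition~\ref{prop:RF-smoothing}, Proposition~\ref{scaling-assumption} and Theorem~\ref{shorttime-compact} to get a Ricci-Deturck $h$-flow with uniform bi-Lipschitz bounds and $g(t)\to g_0$ in $C^\infty_{\loc}(M\setminus\Sigma)$ (Proposition~\ref{prop:local-smoothness-Ck}), then use $\sigma(M)\leq 0$ and the classical Yamabe rigidity to force $\Ric(g(t))\equiv 0$ for $t>0$, and pass to the limit off $\Sigma$. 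So judged against what the paper actually establishes, your outline is the same approach.

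The genuine issues are two. First, your Step 2 is only a sketch of the one step that carries all the difficulty. The paper does not prove it either; it invokes \cite[Theorem 1.1]{LeeTam2021}, whose hypotheses are met precisely because Theorem~\ref{shorttime-compact}(b) gives $|\tilde\nabla g(t)|^2+|\tilde\nabla^2 g(t)|+|\Rm(g(t))|\leq \delta/t$ with $\delta$ arbitrarily small after shrinking time, together with the uniform equivalence $g(t)\sim h$, and the codimension-$\geq 3$ condition enters through capacity-type cutoffs near $\Sigma$ much as you describe. Your proposed scheme (testing the negative part of $\mathcal{R}(g(t))$ against $\chi_\rho^2$ and absorbing errors with the $t^{-1/2}$ gradient bounds and the Sobolev inequality) is in the right spirit, but the delicate point you yourself flag --- that the near-$\Sigma$ contribution must vanish in the correct order of limits $t\to 0$, $\rho\to 0$, and that a naive maximum principle only gives $\mathcal{R}\geq -C/t$ --- is exactly what \cite{LeeTam2021} resolves; as written your Step 2 is not a proof, and to be complete you should either carry it out or cite that result as the paper does, noting that the smallness $\delta/t$ (not merely $C/t$) of the curvature scale is essential there. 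Second, measured against the statement as given, your argument (like the paper's) does not prove the conjecture: $W^{1,n}$ is an added hypothesis, you obtain $\Ric(g)=0$ only on $M\setminus\Sigma$, and you do not address smooth extension across $\Sigma$ at all, whereas the paper at least handles the case of isolated point singularities. You are transparent about these limitations, which is correct, but they should be understood as a real gap relative to Conjecture~\ref{conj-1} rather than merely a stylistic caveat.
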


Using the regularization result from Theorem~\ref{thm-existence} together with the maximum principle method in \cite{LeeTam2021},  we have the following partial result.
\begin{thm}\label{rigidity-FollowingLEETAM}
Let $M^n$ be a compact manifold with $\sigma(M)\leq 0$ and $\Sigma$ is a compact set of co-dimension $\geq 3$ on $M$.  Suppose $g_0$ is a $L^\infty\cap W^{1,n}$ Riemannian metric on $M$ so that $g_0\in C^\infty_{\loc}(M\setminus \Sigma)$ and $\mathcal{R}(g_0)\geq 0$ outside $\Sigma$. Then $\Ric(g_0)=0$ outside $\Sigma$. If $\Sigma$ consists of only isolated points, then $g_0$ is a smooth metric with respect to a possibly different smooth structure on $M$.
\end{thm}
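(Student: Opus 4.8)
The plan is to smooth $g_0$ with the Ricci--Deturck flow supplied by Theorem~\ref{thm-existence} and then run the maximum principle argument of \cite{LeeTam2021}. \emph{Step 1 (regularization).} Since $M$ is compact, $\mathrm{inj}(M,h)>0$ for any smooth background metric $h$ to which $g_0$ is bi-Lipschitz; after rescaling $h$ so that $(\star)$ holds and invoking Proposition~\ref{scaling-assumption}, the small local gradient concentration hypothesis of Theorem~\ref{thm-existence} is met at unit scale. Theorem~\ref{shorttime-compact} then produces a smooth Ricci--Deturck $h$-flow $g(t)$ on $M\times(0,T]$ which is uniformly bi-Lipschitz, obeys the curvature estimates (b)--(c) there for $t>0$, satisfies $g(t)\to g_0$ in $W^{1,n}_{\loc}$, and, because $g_0\in C^\infty_{\loc}(M\setminus\Sigma)$, also satisfies $g(t)\to g_0$ in $C^\infty_{\loc}(M\setminus\Sigma)$ as $t\to0$. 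Note that $g(t)$ is smooth on all of $M$ for each $t>0$; the singularity of $g_0$ survives only at $t=0$.

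\emph{Step 2 (propagating $\mathcal{R}\ge0$ across $\Sigma$).} Writing $R_t=\mathcal{R}(g(t))$, the fact that the Ricci--Deturck flow is diffeomorphic to the Ricci flow (see \eqref{eqn: h-flow-Ricciform}) gives on $M\times(0,T]$ the inequality
\[
\partial_t R_t\ \ge\ \Delta_{g(t)}R_t+\langle W(t),\nabla_{g(t)}R_t\rangle+\tfrac{2}{n}R_t^{2},
\]
where $W(t)$ is the Deturck vector field. By the $C^\infty_{\loc}(M\setminus\Sigma)$ convergence, $R_t\to\mathcal{R}(g_0)\ge0$ locally uniformly on $M\setminus\Sigma$; the flow is uniformly bi-Lipschitz, so $W(t)$ and the ambient geometry are under control; and since $\Sigma$ has codimension $\ge3$ there are cut-off functions $\chi_\varepsilon$ vanishing near $\Sigma$ with $\chi_\varepsilon\uparrow1$ and $\|\tilde\nabla\chi_\varepsilon\|_{L^2(M,h)}\to0$. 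Inserting these into a weighted energy estimate for $(R_t)_-$ exactly as in \cite{LeeTam2021} shows the negative part of $R_t$ is killed, i.e. $R_t\ge0$ on all of $M$ for every $t\in(0,T]$. Pushing the scalar lower bound past the singular set this way is the main obstacle.

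\emph{Step 3 (rigidity from $\sigma(M)\le0$).} Fix $t\in(0,T]$. As $R_t\ge0$ the conformal Laplacian of $g(t)$ is nonnegative, hence $\mathcal{Y}(M,[g(t)])\ge0$; since $\mathcal{Y}(M,[g(t)])\le\sigma(M)\le0$, we get $\mathcal{Y}(M,[g(t)])=0$, so the Yamabe representative of $[g(t)]$ is scalar flat and there is $u_t>0$ with $-\tfrac{4(n-1)}{n-2}\Delta_{g(t)}u_t+R_tu_t=0$. Integrating over the closed manifold $M$ forces $\int_M R_tu_t\,d\mathrm{vol}_{g(t)}=0$, and $R_t\ge0$, $u_t>0$ then give $R_t\equiv0$. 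Feeding $R_t\equiv0$ back into the inequality of Step 2 yields $|\Ric(g(t))|\equiv0$, so $g(t)$ is Ricci-flat for every $t>0$; letting $t\to0$ and using $C^\infty_{\loc}(M\setminus\Sigma)$ convergence gives $\Ric(g_0)=0$ on $M\setminus\Sigma$.

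\emph{Step 4 (smooth extension when $\Sigma$ is finite).} Since $g(t)$ is Ricci-flat, \eqref{eqn: h-flow-Ricciform} reduces to $\partial_t g_{ij}=\nabla_iW_j+\nabla_jW_i$, so $g(t)=\Psi_t^{*}g(T)$, where $\Psi_t$ is generated by $W(t)$ with $\Psi_T=\mathrm{id}$. Conclusion (d) of Theorem~\ref{thm-existence} gives $\sup_M|W(t)|\le Ct^{-1/2}$, which is integrable on $(0,T]$, so $\Psi_t$ and $\Psi_t^{-1}$ converge uniformly as $t\to0$ to a homeomorphism $\Psi_0$ of $M$, the convergence being smooth on $M\setminus\Sigma$. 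When $\Sigma$ is a finite set of points, a blow-up analysis of $g(t)$ at $(p,0)$ for $p\in\Sigma$ — the parabolic rescalings of $g(t)$ are uniformly bi-Lipschitz to Euclidean space and non-collapsed, and at an isolated point the only possible tangent object is a metric cone with $\mathbb{R}^n$ topology, which is therefore flat — shows the flow has no genuine singularity at $p$ and is uniformly smooth near $p$ down to $t=0$; transporting the smooth structure of the target along $\Psi_0$ then exhibits $g_0$ as a smooth Riemannian metric with respect to a possibly different smooth structure on $M$, as in \cite{LeeTam2021}. The removable-singularity step here and the scalar-curvature propagation of Step 2 are where the real work lies; everything else is a routine combination of the a-priori estimates of Section~\ref{Sec:apriori} with the Yamabe obstruction.
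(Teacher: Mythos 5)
Your overall architecture is the same as the paper's: regularize $g_0$ by the flow of Theorem~\ref{shorttime-compact}, propagate $\mathcal{R}\geq 0$ across $\Sigma$ for $t>0$ following \cite{LeeTam2021}, invoke the rigidity coming from $\sigma(M)\leq 0$ to get Ricci-flatness, and defer the smooth-structure statement for isolated singular points to the method of \cite{LeeTam2021}. The genuine gap is in your Step 2, which is exactly the step the paper identifies as the crux. The mechanism you describe --- time-independent cutoffs $\chi_\varepsilon$ with $\|\tilde\nabla\chi_\varepsilon\|_{L^2}\to 0$ fed into an energy estimate for $(R_t)_-$ --- is not the argument of \cite{LeeTam2021} and does not close. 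On the support of $\tilde\nabla\chi_\varepsilon$ (a neighbourhood of $\Sigma$) the only available bound on $(R_t)_-$ is of order $\delta/t$, so after integrating by parts the unavoidable error term behaves like $\int_0^{t_0}(\delta/t)^2\,\|\tilde\nabla\chi_\varepsilon\|_{L^2}^2\,dt$, which diverges as $t\to0$ no matter how small $\|\tilde\nabla\chi_\varepsilon\|_{L^2}$ is; coupling the cutoff scale to $\sqrt{t}$ in the naive way only works when the codimension exceeds $4$, not $3$. What the paper actually uses is the pointwise barrier/maximum-principle argument of \cite[Theorem 1.1]{LeeTam2021}, whose applicability hinges on the estimate $|\tilde\nabla g(t)|^2+|\tilde\nabla^2 g(t)|+|\Rm(g(t))|\leq \delta/t$ with $\delta$ \emph{arbitrarily small} (conclusion (b) of Theorem~\ref{shorttime-compact}, which is precisely why that theorem rather than Theorem~\ref{thm-existence} is invoked) together with the uniform bi-Lipschitz bound. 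You have these estimates in Step 1 but never use the smallness of $\delta$, which is the ingredient that makes the propagation across $\Sigma$ work.

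Two smaller points. In Step 3, the final deduction is vacuous as written: substituting $R_t\equiv 0$ into the \emph{inequality} $\partial_t R\geq \Delta R+\langle W,\nabla R\rangle+\tfrac2n R^2$ gives $0\geq 0$ and no information; you need the exact identity with $2|\Ric|^2$ in place of $\tfrac2n R^2$, or simply the classical rigidity (which the paper cites) that a scalar-flat metric on a compact manifold with $\sigma(M)\leq 0$ is Ricci-flat. In Step 4, the blow-up discussion is both unnecessary and incorrect: since $g(t)=\Psi_t^*g(T)$ with $g(T)$ a fixed smooth Ricci-flat metric, $|\Rm(g(t))|_{g(t)}$ is uniformly bounded, so there is no curvature singularity and no tangent-cone analysis to perform; moreover the claim that a metric cone homeomorphic to $\mathbb{R}^n$ must be flat is false in general (cones over non-round metrics on $S^{n-1}$). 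The real issue at an isolated point of $\Sigma$ is the behaviour and invertibility of the gauge maps $\Psi_t$ as $t\to 0$ and the regularity of their limit, which --- like the paper --- you ultimately delegate to \cite{LeeTam2021}; presented that way it is acceptable as a sketch, but the cone argument should be removed.
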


\begin{proof}[Sketch of the proof]
It suffices to find an smooth approximation $g_i$ such that $g_i\to g_0$ in $C^\infty_{\loc}(M\setminus \Sigma)$ and $\mathcal{R}(g_i)\geq 0$ on $M$. In \cite{LeeTam2021}, this was carried out in the case when $g_0$ is $C^0(M)$ using the result of \cite{Simon2002}. In the setting of $L^\infty\cap W^{1,n}$, we use the new existence theory instead. Since the argument is almost identical, we only sketch the proof here.

 By Theorem~\ref{shorttime-compact}, we may find a Ricci-Deturck $h$-flow $g(t)$ starting from $g_0$ in $W^{1,n}$ sense such that $g(t)$ stays uniformly equivalent to $h$ and satisfies
 \begin{equation}
 |\tilde \nabla g(t)|^2+ |\tilde \nabla^2 g(t)|+|\Rm(g(t))|\leq \frac{\delta}{t}
 \end{equation}
where $\delta$ can be made as small as we wish by shrinking the time. The curvature estimates follows from the boundedness of $|\Rm(h)|$ and the $C^2$ bound of $g(t)$ with respect to $h$. By the proof of \cite[Theorem 1.1]{LeeTam2021}, we have $\mathcal{R}(g(t))\geq 0$ for $t>0$ where the smallness of $\delta$ and uniform bi-Lipschitz of $g(t)$ are essentially used.  Hence, $\Ric(g(t))\equiv 0$ on $M\times (0,T]$ by the classical rigidity. Since $g(t)$ is smooth up to $t=0$ outside $\Sigma$. We obtain the conclusion by letting $t\to 0$ on $g(t)$ outside $\Sigma$.
\end{proof}

\begin{rem}
We remark here that one can obtain an appropriate extension of $g_0$ across $\Sigma$ using the additional structure $W^{1,n}$ following the method in \cite{LeeTam2021}, see also \cite{LammSimon2021}.  Moreover, the co-dimension of $\Sigma$ can be relaxed further using the concept of upper Minkowski dimension as in \cite{LeeTam2021}. Since we are primarily interested in the conjecture of Schoen on $L^\infty$ metrics, we do not pursue here.
\end{rem}

As one can expect from the works in \cite{LeeTam2021,LiMantoulidis2019,ShiTam2018}, we can use the regularization method in the proof of Theorem~\ref{rigidity-FollowingLEETAM} to prove the following positive mass Theorem. We refer readers to \cite{LeeTam2021,ShiTam2018} for the background setting. For related works, see also \cite{Miao,JiangShengZhang2020,Lee2013,LeeLeFloch2015}.
\begin{thm}\label{rigidity-FollowingLEETAM-PMT}
Let $(M^n, g_0)$ be a Asymptotically flat manifold with $n\geq 3$, $g_0$ is a $L^\infty\cap W^{1,n}_{loc}$ metric on $M$ such that $g_0$ is smooth away from some compact set $\Sigma$ of $M$ of co-dimension at least $3$. Suppose $\mathcal{R}(g_0)\geq 0$ outside $\Sigma$, then the ADM mass of each end is nonnegative. Moreover, if the ADM mass of one of the ends is zero, then $M$ is diffeomorphic to $\mathbb{R}^n$ and $g_0$ is flat outside $\Sigma$.
\end{thm}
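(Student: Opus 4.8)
The plan is to run the argument behind Theorem~\ref{rigidity-FollowingLEETAM} almost verbatim, with the compactness of $M$ replaced by a treatment of the asymptotically flat ends, and then to feed the resulting smooth flow into the classical positive mass theorem. First I would fix a smooth background metric $h$ on $M$ which is asymptotically flat and has bounded geometry --- in particular $|\Rm(h)|$ bounded and $\mathrm{inj}(M,h)>0$ --- for instance by taking $h$ to agree with the Euclidean metric on each end outside a large compact set and interpolating smoothly inside. Since $g_0$ is $L^\infty$ and is asymptotically flat outside the compact set $\Sigma$, it is globally bi-Lipschitz to $h$; and since $\tilde\nabla g_0$ decays at the ends (under the standard asymptotic flatness hypotheses) while $g_0\in W^{1,n}_{\mathrm{loc}}$ near $\Sigma$, in fact $g_0\in W^{1,n}(M)$ globally. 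Hence Theorem~\ref{shorttime-compact} applies and yields a smooth solution $g(t)$ of the Ricci-Deturck $h$-flow on $M\times(0,S]$ which is uniformly bi-Lipschitz to $h$, which converges to $g_0$ in $W^{1,n}_{\mathrm{loc}}$ and in $C^\infty_{\mathrm{loc}}(M\setminus\Sigma)$ as $t\to0$, and which satisfies
\begin{equation}
|\tilde\nabla g(t)|^2+|\tilde\nabla^2 g(t)|+|\Rm(g(t))|\leq\frac{\delta}{t},
\end{equation}
where $\delta$ can be made as small as we wish by shrinking $S$ (the curvature bound coming from the $C^2$ bound of $g(t)$ relative to $h$ together with the boundedness of $|\Rm(h)|$).

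Next I would show $\mathcal{R}(g(t))\geq0$ on all of $M$ for $t\in(0,S]$. Outside a fixed compact neighbourhood of $\Sigma$ the metric $g_0$ is smooth and asymptotically flat, so there the flow is the ordinary smooth Ricci-Deturck flow, the scalar curvature obeys the reaction-drift-diffusion equation $\partial_t\mathcal{R}=\Delta_{g(t)}\mathcal{R}+2|\Ric(g(t))|^2+\langle W,\nabla\mathcal{R}\rangle$, and the standard decay of $g(t)$ at the ends keeps $\mathcal{R}(g(t))$ from going negative there. The one place that requires care is a neighbourhood of $\Sigma$, and there I would run the maximum principle argument of \cite{LeeTam2021} without change: it uses cut-off functions adapted to $\Sigma$ (available because $\Sigma$ has co-dimension $\geq3$), the uniform bi-Lipschitz bound for $g(t)$, and the smallness of $\delta$ in the displayed estimate, exactly as in the proof of Theorem~\ref{rigidity-FollowingLEETAM}. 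In addition, because $g_0$ is genuinely smooth and asymptotically flat outside the compact set $\Sigma$, the classical theory of Ricci(-Deturck) flow on asymptotically flat manifolds (see \cite{ShiTam2018,McFeronSze2012} and the references therein) shows that each end of $(M,g(t))$ stays asymptotically flat for $t\in(0,S]$ and that the ADM mass of each end is independent of $t$:
\begin{equation}
\mathfrak{m}_k(g(t))=\mathfrak{m}_k(g_0)\qquad\text{for all }t\in(0,S].
\end{equation}

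Finally I would invoke the smooth positive mass theorem. For each fixed $t\in(0,S]$, $(M,g(t))$ is a smooth asymptotically flat manifold with $\mathcal{R}(g(t))\geq0$, so $\mathfrak{m}_k(g(t))\geq0$ for every end, and by the mass invariance above $\mathfrak{m}_k(g_0)\geq0$, which is the first assertion. Suppose now $\mathfrak{m}_{k_0}(g_0)=0$ for some end. Then $\mathfrak{m}_{k_0}(g(t))=0$ for all $t\in(0,S]$, so the rigidity part of the smooth positive mass theorem forces $(M,g(t))$ to be isometric to $(\mathbb{R}^n,g_{\mathrm{flat}})$; in particular $M$ is diffeomorphic to $\mathbb{R}^n$ and $g(t)$ is flat for every $t\in(0,S]$. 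Letting $t\to0$ and using $g(t)\to g_0$ in $C^\infty_{\mathrm{loc}}(M\setminus\Sigma)$ shows that $g_0$ is flat outside $\Sigma$, completing the proof.

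The bookkeeping steps --- existence of the background $h$, the decay estimates at the ends, preservation of asymptotic flatness, and invariance of the ADM mass along the flow --- are all standard once the flow has been constructed, and the construction itself is supplied by Theorem~\ref{shorttime-compact}. I expect the genuinely delicate point to be the preservation of $\mathcal{R}\geq0$ across the singular set $\Sigma$ under the weakened $W^{1,n}$ hypothesis; but this is precisely what the quantitative estimate above, with $\delta$ small, was designed to handle, and the argument is the same barrier/maximum-principle argument of \cite{LeeTam2021} already employed for Theorem~\ref{rigidity-FollowingLEETAM}.
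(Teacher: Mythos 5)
Your proposal follows essentially the same route as the paper's own sketch: construct the asymptotically flat Ricci--Deturck $h$-flow from $g_0$ via Theorem~\ref{shorttime-compact} (with $h$ Euclidean at the ends), preserve $\mathcal{R}\geq 0$ across $\Sigma$ by the maximum-principle argument of \cite{LeeTam2021}, and then apply the smooth positive mass theorem and its rigidity to $g(t)$, passing back to $g_0$ by smooth convergence outside $\Sigma$.

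One caveat: you assert as ``standard bookkeeping'' that the ADM mass is invariant along the flow, $m_{ADM}(E,g(t))=m_{ADM}(E,g_0)$, including at the rough initial time. Constancy for $t>0$ is fine, but the behaviour as $t\to 0$ is exactly the delicate point (local smooth convergence near the end does not control the mass, which is a statement at spatial infinity), and what the quoted arguments of \cite[Proposition 5.1]{LeeTam2021}, based on \cite{McFeronSze2012}, actually give is the one-sided inequality $m_{ADM}(E,g(t))\leq m_{ADM}(E,g_0)$. Fortunately that inequality is all your argument uses: it yields $m_{ADM}(E,g_0)\geq m_{ADM}(E,g(t))\geq 0$, and in the rigidity case $m_{ADM}(E,g_0)=0$ forces $m_{ADM}(E,g(t))=0$, so the proof goes through once the equality is replaced by the inequality with the correct citation.
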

\begin{proof}[Sketch or Proof]
The proof follows verbatim from that of \cite[Theorem 1.2]{LeeTam2021}. We only point out the modifications. By Theorem~\ref{shorttime-compact} and the asymptotically flat assumption, there is asymptotically flat Ricci-Deturck $h$ flow on $M$ starting from $g_0$ in $W^{1,n}_{loc}$ sense. We may assume $h$ to be $g_{euc}$ at each ends. Moreover, for each end $E$ and $t>0$,
$$m_{ADM}(E,g(t))\leq m_{ADM}(E,g_0).$$
See the proof of \cite[Proposition 5.1]{LeeTam2021} which is based on \cite{McFeronSze2012}. The proof of \cite[Theorem 1.2]{LeeTam2021} infers that $g(t)$ is of $\mathcal{R}(g(t))\geq 0$ for $t>0$ and hence the positive mass Theorem for smooth metrics \cite{SY,SchoenYau2017} implies that $m_{ADM}(E,g(t))\geq 0$ and hence the $m_{ADM}(E,g_0)\geq 0$. Moreover, if $m_{ADM}(E,g_0)=0$ for some end $E$, then the ADM mass of $g(t)$ along the same end is zero and hence $(M,g(t))$ is isometric to the flat Euclidean space. The flatness of $g_0$ outside $\Sigma$ follows from the smooth convergence outside $\Sigma$.
\end{proof}

\section{Application on scalar curvature persistence}\label{Sec:scalarPres}

In this section, we would like to understand how scalar curvature lower bound behaves when the convergence is weak while the limiting metric is a-priori smooth. Under the uniform bi-Lipschitz assumption, we can show that the scalar curvature lower bound persists under $W^{1,n}_{\loc}$ convergence. We focus on the compact case while the analogous non-compact case can be proved by the similar argument.
\begin{thm}\label{thm:stability-R-lowerbound}
Let $(M,h)$ be a compact manifold. Suppose $g_{i,0}$ is a sequence of smooth Riemannian metrics on $M$ such that
\begin{enumerate}
\item[(i)] $\Lambda_0^{-1}h\leq g_{i,0}\leq \Lambda_0 h$ on $M$ for some $\Lambda_0>1$;
\item[(ii)] $\limsup_{i\to +\infty}\|g_{i,0}-g_0\|_{W^{1,n}(M,h)}=0$ for some smooth metrics $g_0$ and $h$;
\item[(iii)] $\mathcal{R}(g_{i,0})\geq \kappa$ on $M$ for some $\kappa\in \mathbb{R}$.
\end{enumerate}
Then we have $\mathcal{R}(g_0)\geq \kappa$ on $M$.
\end{thm}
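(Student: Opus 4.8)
The plan is to use the Ricci--Deturck $h$-flow as a smoothing device, exactly in the spirit of Bamler's Ricci flow proof \cite{Bamler2016}, but exploiting the uniqueness theorem (Theorem~\ref{thm-unique}) to pass information between the approximating sequence and the limit. First I would apply Theorem~\ref{thm-existence} (or rather Theorem~\ref{shorttime-compact}, noting that on a compact $M$ every metric is automatically in $L^\infty\cap W^{1,n}$) to the smooth metric $g_0$ itself: after rescaling $h$ so that the gradient concentration of $g_0$ is small (Proposition~\ref{scaling-assumption} applies since $M$ is compact, hence non-collapsed), we obtain a smooth Ricci--Deturck $h$-flow $g(t)$ on $M\times(0,T]$ with $g(t)\to g_0$ as $t\to 0$. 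Since $g_0$ is smooth, part (e) of Theorem~\ref{thm-existence} gives $g(t)\to g_0$ in $C^\infty_{\mathrm{loc}}=C^\infty(M)$; in particular $\mathcal{R}(g(t))\to \mathcal{R}(g_0)$ uniformly on $M$ as $t\to 0$. Thus it suffices to prove $\mathcal{R}(g(t))\geq \kappa$ for all small $t>0$.

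Next I would run the flow from each $g_{i,0}$. By hypothesis (ii), for $i$ large the metrics $g_{i,0}$ satisfy the hypotheses of Theorem~\ref{thm-existence} (with the same rescaled $h$ and a uniform $\Lambda_0$), so we get Ricci--Deturck $h$-flows $g_i(t)$ on a common interval $M\times(0,T]$ with uniform bi-Lipschitz bounds, uniform $W^{1,n}$ gradient-concentration bounds, and uniform interior $C^k$ estimates $\sup_M|\tilde\nabla^k g_i(t)|\leq C(k)t^{-k/2}$. Because the Ricci--Deturck flow is diffeomorphic to the Ricci flow and the Ricci flow preserves scalar curvature lower bounds (for these smooth, bounded-curvature flows this is the standard maximum principle applied to $\partial_t\mathcal{R}=\Delta\mathcal{R}+2|\Ric|^2\geq \Delta\mathcal{R}+\frac{2}{n}\mathcal{R}^2$, noting $\mathcal{R}$ is a diffeomorphism invariant), we have $\mathcal{R}(g_i(t))\geq \kappa$ on $M\times(0,T]$ for every $i$. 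Now pass $i\to\infty$: by Arzel\`a--Ascoli and a diagonal argument, along a subsequence $g_i(t)\to \bar g(t)$ in $C^\infty_{\mathrm{loc}}(M\times(0,T])$, and $\bar g(t)$ is a Ricci--Deturck $h$-flow satisfying $\mathcal{R}(\bar g(t))\geq\kappa$ for $t>0$.

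The crucial step is to identify $\bar g(t)$ with $g(t)$. For this I would use Theorem~\ref{thm-unique}: both $g(t)$ and $\bar g(t)$ satisfy the uniform bi-Lipschitz bound (a) and the uniform $W^{1,n}$ concentration bound (b) (these are preserved in the limit from the $g_i$ estimates, after shrinking $T$ if necessary so the constant in (b) is within the allowed range), so it remains to check the initial condition (c), namely $\bar g(t)\to g_0$ in $L^2_{\mathrm{loc}}$ as $t\to 0$. This follows from Proposition~\ref{timezero-regularity-W1n} applied uniformly to the $g_i(t)$: for each $i$, $\fint_{B_h(x,1)}|g_i(t)-g_{i,0}|^2\,d\mathrm{vol}_h\leq L_2\,t$, and letting $i\to\infty$ (using $g_{i,0}\to g_0$ in $W^{1,n}\subset L^2$ and $g_i(t)\to\bar g(t)$ in $C^0_{\mathrm{loc}}$ for fixed $t>0$) gives $\fint_{B_h(x,1)}|\bar g(t)-g_0|^2\,d\mathrm{vol}_h\leq L_2\,t\to 0$. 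Hence $\bar g(t)$ lies in the uniqueness class of Theorem~\ref{thm-unique}, and since $g(t)$ does too (it converges to $g_0$ even in $C^\infty$), we conclude $\bar g(t)=g(t)$ for all $t\in(0,T']$ for some $T'>0$. Therefore $\mathcal{R}(g(t))=\mathcal{R}(\bar g(t))\geq\kappa$ for $t\in(0,T']$, and letting $t\to 0$ with $\mathcal{R}(g(t))\to\mathcal{R}(g_0)$ uniformly yields $\mathcal{R}(g_0)\geq\kappa$.

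The main obstacle is verifying that $\bar g(t)$ genuinely falls in the uniqueness class — in particular making the passage to the limit in the $L^2_{\mathrm{loc}}$ initial condition honest (the bound from Proposition~\ref{timezero-regularity-W1n} must be uniform in $i$, which it is since $L_2$ depends only on $n,\Lambda_0$), and ensuring the concentration constant in condition (b) of Theorem~\ref{thm-unique} is respected, which may force shrinking $T$. Once uniqueness is in hand the argument is essentially soft. A secondary point worth care is the preservation of $\mathcal{R}\geq\kappa$ along the flow for each fixed smooth $g_{i,0}$: since $(M,h)$ compact forces $g_{i,0}$ to have bounded geometry, the flow $g_i(t)$ and its diffeomorphic Ricci flow are smooth with bounded curvature on $M\times[0,T]$, so the strong maximum principle applies without subtlety.
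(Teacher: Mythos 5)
Your proposal is correct and follows essentially the same route as the paper: run the flow from each $g_{i,0}$ via Theorem~\ref{thm-existence}/Theorem~\ref{shorttime-compact}, use preservation of the scalar curvature lower bound and pass to a limit flow, then use Proposition~\ref{timezero-regularity-W1n} to verify the $L^2_{\loc}$ initial condition and Theorem~\ref{thm-unique} to identify the limit with the (classical) flow starting from the smooth metric $g_0$, which converges back to $g_0$ smoothly. The points you flag for care (uniformity of the constant in Proposition~\ref{timezero-regularity-W1n}, the concentration bound in the uniqueness class, and the maximum principle for the smooth compact flows) are exactly the ones implicit in the paper's argument.
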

\begin{proof}
We follow the strategy in \cite{Bamler2016}. By Theorem~\ref{thm-existence} and Theorem~\ref{shorttime-compact}, for $i$ sufficiently large there exists a solution to the Ricci-Deturck $h$-flow $g_i(t)$ on $M\times [0,T]$ starting from $g_{i,0}$ such that $g_i(t)$ is uniformly smooth away from $t=0$ and stay uniformly equivalent to $h$ for $t\in (0,T]$. As in the proof of Theorem~\ref{thm:stability-sigma}, we obtain a smooth Ricci-Deturck $h$-flow $g(t)$ on $M\times (0,T]$ such that $\mathcal{R}(g(t))\geq \kappa$ for $t\in (0,T]$. Moreover, Proposition~\ref{timezero-regularity-W1n} and our assumptions imply that $g(t)$ attains $g_0$ as the initial data in $L^2$ sense as $t\to 0$. By Theorem~\ref{thm-unique}, $g(t)$ coincides with the classical smooth Ricci-Deturck $h$-flow solution $\hat g(t)$ with initial data $g_0$. This shows that $\mathcal{R}(\hat g(t))\geq \kappa$ for all $t\in (0,T]$. The conclusion follows by letting $t\to 0$ on $\hat g(t)$.
\end{proof}
\begin{rem}
In \cite{Bamler2016}, the persistence of scalar curvature is made local so that $\kappa$ is allowed to be a function, see also \cite{HuangLee2021} for some generalizations along this line. We expect that similar localization under the current setting is also true using the technique in \cite{HuangLee2021}.
\end{rem}

\end{document}